\newtheorem{theorem}{Theorem}[section]
\newtheorem{corollary}[theorem]{Corollary}
\theoremstyle{definition}
\newtheorem{definition}[theorem]{Definition}
\newtheorem{conjecture}[theorem]{Conjecture}
\newtheorem{proposition}[theorem]{Proposition}
\newtheorem{example}[theorem]{Example}
\newtheorem{remark}[theorem]{Remark}
\newtheorem{lemma}[theorem]{Lemma}
\def\Zbb{\mathbb{Z}}
\def\Rbb{\mathbb{R}}
\def\Cbb{\mathbb{C}}
\def\Nbb{\mathbb{N}}
\def\Tcal{\mathcal{T}}
\def\Ecal{\mathcal{E}}
\def\Xcal{\mathcal{X}}
\def\CS{\mathrm{CS}}
\def\tr{\mathrm{tr}}	
\def\sl{\mathfrak{sl}_2(\Cbb)}
\newcommand{\bigzero}{\mbox{\normalfont\Large\bfseries 0}}
\newcommand{\rvline}{\hspace*{-\arraycolsep}\vline\hspace*{-\arraycolsep}}
\begin{document}
	
	\title[ ]{The twisted 1-loop invariant\\and the Jacobian of Ptolemy coordinates}
	
	%    Information for first author

	\author[ ]{Seokbeom Yoon}
	\address{Departament de Matem\`atiques, Universitat Aut\`onoma de Barcelona, 08193 Cerdanyola del Vall\`es, Spain}
	\email{sbyoon15@mat.uab.cat}

	%    General info
%	 \subjclass[2020]{57K31, 57K32}
	
	 \date{\today}
	
	% \dedicatory{This paper is dedicated to our advisors.}
	
	 \keywords{Twisted 1-loop invariant, twisted Alexander polynomial, 1-loop invariant, once-punctured torus bundle, Ptolemy coordinates}
	
	\begin{abstract} 
				
		We present an alternative definition of the twisted 1-loop invariant in terms of the Jacobian of Ptolemy coordinates.
		As an application, we prove that the twisted 1-loop invariant is equal to the adjoint twisted Alexander polynomial for all hyperbolic once-punctured torus bundles.
		This implies that the 1-loop conjecture proposed by Dimofte and Garoufalidis holds for all hyperbolic once-punctured torus bundles.
		
	\end{abstract} 
	
	\maketitle
%	\tableofcontents

	\section{Introduction} \label{sec.intro}

	An ideal triangulation of a cusped hyperbolic 3-manifold  is an effective tool to describe the hyperbolic structure of the manifold: each tetrahedron is assigned to a complex variable which describes its shape and each edge has an equation to  guarantee that the tetrahdra around the edge are well-glued compatibly~\cite{Thurston}.
	These equations can be encoded into integer matrices called the gluing equation matrices.
	Surprisingly, (a slight modification of) the gluing equation matrices turned out to have the symplectic property \cite{NZ}, and  this led to some new invariants of 3-manifolds defined in terms of ideal triangulations.
	The \emph{1-loop invariant} introduced in~\cite{DG1} is one of such invariants which is also motivated by the generalized Volume conjecture~\cite{Guk05}.
	Although it is defined in terms of ideal triangulations, it has topological invariance, namely, it is invariant under 2--3 Pachner moves.
	Furthermore, the 1-loop conjecture~ \cite[Conj.1.8]{DG1} predicts that the 1-loop invariant coincides with the adjoint Reidemeister torsion~\cite{Porti:torsion}. 
	To state the conjecture precisely, let $M$ be a 1-cusped hyperbolic 3-manifold and $\Tcal$ be an ideal triangulation of $M$.
	We denote by $\tau_\gamma(M)$ the adjoint Reidemeister torsion of $M$ and by $\tau_\gamma^\CS(\Tcal)$ the 1-loop invariant of $\Tcal$ (here $\CS$ stands for the Chern--Simons theory, as the 1-loop invariant is derived from its perturbative partition function).  Note that both are elements of the trace field of $M$ well-defined up to sign and depend on a choice of a peripheral curve $\gamma$ of $M$.
	
%		
	
%	It also depends  on a choice of a peripheral curve $\gamma$, as the twisted cohomology $H^\ast(M;\sl)$ is non-trivial.
	
	\begin{conjecture}[\cite{DG1}] \label{conj.1loop}
		The 1-loop invariant $\tau^\CS_\gamma(\Tcal)$ of $\Tcal$ is equal to the adjoint Reidemeister torsion $\tau_\gamma(M)$ of $M$ for all peripheral curves $\gamma$.
	\end{conjecture}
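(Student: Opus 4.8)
The plan is to realize \emph{both} sides of Conjecture~\ref{conj.1loop} as the Reidemeister torsion of one and the same based chain complex attached to $\Tcal$, so that the equality follows from the topological invariance of torsion together with a careful bookkeeping of the sign and of the dependence on $\gamma$. On the left, I would use the characterization of $\tau^\CS_\gamma(\Tcal)$ announced in the abstract: up to sign and an explicit monomial in the shape parameters (the ``combinatorial flattening'' factor), $\tau^\CS_\gamma(\Tcal)$ is the Jacobian of the Ptolemy equations of $\Tcal$ at the decorated geometric representation, computed with respect to a suitable affine normalization of the scaling action together with the peripheral constraint imposed by $\gamma$. On the right, I would invoke Porti's theorem \cite{Porti:torsion}: $\tau_\gamma(M)$ is the $\mathrm{Ad}\rho$-twisted Reidemeister torsion of $M$, taken with respect to the basis of $H^1(\partial M;\mathrm{Ad}\rho)$ determined by $\gamma$, and it may be computed from any CW structure.

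The heart of the argument is a comparison of complexes, in the following steps. \textbf{(a)} For a fixed ideal triangulation $\Tcal$, build the ``Ptolemy deformation complex'' $C^\bullet(\Tcal;\mathrm{Ad}\rho)$ spanned by infinitesimal deformations of the edge coordinates and of the Ptolemy relations, with differential the Ptolemy Jacobian; check that, after the $\gamma$-stabilization, it is acyclic ($H^0(M;\mathrm{Ad}\rho)=0$ and the deformation space is pinned down by $\gamma$), so that its torsion is well defined and equals $\tau^\CS_\gamma(\Tcal)$ up to the flattening monomial. \textbf{(b)} Identify $C^\bullet(\Tcal;\mathrm{Ad}\rho)$, up to simple homotopy equivalence, with the twisted cochain complex $C^\bullet(M,\partial M;\mathrm{Ad}\rho)$ of a genuine CW structure on the pair $(M,\partial M)$ obtained by truncating the ideal vertices of $\Tcal$; the key point is that a single $2$--$3$ Pachner move changes the Ptolemy complex by an elementary expansion, which matches the $2$--$3$ invariance of $\tau^\CS_\gamma$ established in \cite{DG1}, so the identification for every $\Tcal$ reduces to the identification for a single triangulation of $M$. \textbf{(c)} Match the $\gamma$-data: under this simple homotopy equivalence the peripheral row and column that stabilize the Ptolemy Jacobian correspond to the $\gamma$-generated line in $H^1(\partial M;\mathrm{Ad}\rho)$, since both are governed by the derivative, along the geometric component of the character variety, of the trace of the $\gamma$-holonomy. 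Transporting signs and the flattening monomial through this dictionary pins down the equality up to the sign ambiguity that both invariants carry.

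The main obstacle is step \textbf{(b)} in full generality. For an arbitrary ideal triangulation the gluing/Ptolemy variety may fail to be reduced or $0$-dimensional at $\rho$, a positively oriented (geometric) solution need not live on $\Tcal$ itself, and the flattening may be obstructed; when this happens the Ptolemy complex is not manifestly acyclic relative to $\partial M$ and the Jacobian is not defined without a regularization, which is precisely why Conjecture~\ref{conj.1loop} remains open. I would therefore first carry out the chain-level identification of \textbf{(b)} for triangulations admitting an explicit layered (fan) structure, where the Ptolemy Jacobian takes a banded, transfer-matrix form that can be evaluated in closed form and compared directly with Porti's torsion --- equivalently, via the identification of the adjoint twisted Alexander polynomial with the adjoint Reidemeister torsion, with that polynomial --- and then transport the result to an arbitrary $\Tcal$ by $2$--$3$ and $0$--$2$ moves. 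In this form the conjecture is reduced to the combinatorial problem of connecting an arbitrary ideal triangulation to a computable one through triangulations carrying the geometric solution, and that is the step I expect to be genuinely hard.
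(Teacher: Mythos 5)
This statement is an open conjecture: the paper does not prove it in general, and neither do you. Your own step \textbf{(b)} is exactly where the proposal stops being a proof --- you acknowledge that identifying the Ptolemy deformation complex with a genuine twisted CW complex of $(M,\partial M)$ for an \emph{arbitrary} ideal triangulation (and transporting the geometric solution through $2$--$3$ and $0$--$2$ moves) is unresolved, and the paper offers no mechanism for it either. So the proposal should be read as a strategy with a genuine, named gap, not as a proof of Conjecture~\ref{conj.1loop}. What the paper actually establishes is only the special case of hyperbolic once-punctured torus bundles (Corollary~\ref{thm.main3}), which corresponds to the ``layered/fan'' case you defer to at the end.

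For that special case, it is worth comparing your sketch with what the paper does, because the routes differ in two substantive ways. First, the paper does not compare the 1-loop invariant with Porti's torsion directly; it proves the \emph{twisted} statement (Conjecture~\ref{conj.twist1loop}) for the monodromy triangulation. Theorem~\ref{thm.twistPtolemy} rewrites $\tau^{\CS}(\Tcal,t)$ as a Ptolemy Jacobian (your step \textbf{(a)}, and with the pleasant feature that the flattening monomial cancels entirely), then Lemma~\ref{lem.red} eliminates variables down to the $3\times 3$ Jacobian of the monodromy acting on Ptolemy coordinates of the fiber, while Theorem~\ref{thm.ATAP} identifies the adjoint twisted Alexander polynomial with $\det(tI-\varphi^\ast)$ on $H^1(F;\sl)\cong T_{[\eta]}\Xcal(F)$ via the Mayer--Vietoris computation for fibered manifolds; the untwisted statement then follows by differentiating at $t=1$ using \cite[Thm.1.7]{GY21}. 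Second, your step \textbf{(c)} (matching the $\gamma$-stabilization to the $\gamma$-line in $H^1(\partial M;\mathrm{Ad}\rho)$) is both vague and unnecessary in the paper's argument: the dependence on the peripheral curve is handled by quoting Siejakowski's result that validity of the conjecture for one curve implies it for all, so only the single curve $\lambda$ with $\alpha(\lambda)=0$ ever has to be analyzed. If you want to salvage your proposal, you should either restrict its conclusion to the class of manifolds where step \textbf{(b)} can be carried out, or supply the missing argument that an arbitrary geometric triangulation can be connected to a computable one through triangulations carrying the geometric solution --- and the latter is precisely why the conjecture is still open.
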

%	Here the adjoint Reidemeister torsion $\tau_\gamma(M)$ is a combinatorial torsion of $M$ with the Lie algebra coefficient  $\sl$  twisted by the adjoint action $\mathrm{Ad}\rho$ associated to the geometric representation $\rho:\pi_1(M)\rightarrow \mathrm{PSL}_2(\Cbb)$ where the basis of  the twisted cohomology $H^1(M;\sl)$ is determined by the peripheral curve $\gamma$. We refer to  ~\cite{Porti:torsion} for details.
	
	 Garoufalidis and the author showed  in  \cite{GY21} that the gluing equation matrices can be generalized to  \emph{twisted gluing equations matrices} (whose entries are in $\Zbb[t^{\pm1}]$) by 
	 choosing an infinite cyclic cover of $M$, or an epimorphism $\alpha : \pi_1(M) \twoheadrightarrow \Zbb$  equivalently, and additionally recording how the gluing data are lifted to a fundamental domain. They also defined \emph{twisted 1-loop invariant} using a similar construction to the 1-loop invariant and conjectured that it  is equal to the  adjoint twisted Alexander polynomial (associated with the geometric representation and the epimorphism $\alpha$). Note that both are Laurent polynomials in one variable $t$ over the trace field and well-defined up to multiplication of  units in $\Zbb[t^{\pm1}]$.
		
	\begin{conjecture}[\cite{GY21}]  \label{conj.twist1loop}
		The twisted 1-loop invariant $\tau^\CS(\Tcal,t)$ of $\Tcal$ is equal to the adjoint twisted Alexander  polynomial $\tau(M,t)$ of $M$.
	\end{conjecture}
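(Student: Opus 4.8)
The plan is to deduce the identity from the description of the twisted 1-loop invariant $\tau^\CS(\Tcal,t)$ as a Jacobian of Ptolemy coordinates, by recognizing that Jacobian as the torsion of an explicit two-term based complex built from the (decorated) twisted Ptolemy variety of $\Tcal$, and then matching that complex --- up to acyclic summands of trivial torsion --- with the adjoint-twisted chain complex of $M$, whose torsion is $\tau(M,t)$ up to sign and a unit of $\Zbb[t^{\pm1}]$.

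First I would fix the twisted Ptolemy variety $P(\Tcal,t)$: Ptolemy coordinates $c_e$ on the edges of $\Tcal$ subject to the Ptolemy relations, with the face and edge identifications weighted by powers of $t$ according to the epimorphism $\alpha$, recorded on a fundamental domain in the same way the twisted gluing equation matrices of \cite{GY21} record the lifts of the gluing data. The geometric representation provides a point $x_0 \in P(\Tcal,t)\otimes\Cbb$, and the twisted 1-loop invariant equals $\pm\det$ of the Jacobian $J$ of the defining equations at $x_0$ after discarding the usual redundancies (one dependent relation per tetrahedron, one coordinate normalization per ideal vertex). I would package this as the torsion of
\[
0 \longrightarrow \Zbb[t^{\pm1}]^{\,T} \xrightarrow{\ J\ } \Zbb[t^{\pm1}]^{\,E'} \longrightarrow 0 ,
\]
where $T$ is the set of tetrahedra of $\Tcal$ and $E'$ the set of edges modulo the chosen normalizations. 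The first genuine point is that this is a legitimate based acyclic complex, i.e.\ $J$ has the expected rank; this is exactly the statement that $x_0$ is a smooth point of the decorated Ptolemy variety.

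The comparison step uses the Garoufalidis--Thurston--Zickert cocycle, which builds the $\sl$-cocycle representing the geometric representation (and hence, after $\mathrm{Ad}$, its adjoint) directly from Ptolemy coordinates. Differentiating that construction along $P(\Tcal,t)$ identifies the complex above, after adjoining acyclic summands carrying the vertex data and contributing trivial torsion, with the $\mathrm{Ad}\rho$- and $\alpha$-twisted chain complex of $M$ with coefficients in $\sl[t^{\pm1}]$, computed from the truncated ideal triangulation. Concretely I would construct a chain map from the Ptolemy complex to this twisted chain complex, show it is a quasi-isomorphism using the long exact sequence relating $M$, its truncation, and the peripheral tori (the tori carrying the data that in the untwisted conjecture produces the dependence on $\gamma$, here absorbed into $\alpha$), and carry the torsion across via its multiplicativity in short exact sequences. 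Triangulation independence then holds on both sides --- on the 1-loop side by invariance under 2--3 Pachner moves, on the torsion side by definition --- so that matching one triangulation per $M$ would already suffice; but I would prefer to establish the chain equivalence for an arbitrary $\Tcal$, which localizes the comparison to a single tetrahedron and the elementary 2--3 move.

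The main obstacle is exactly the one that confines the unconditional statements of this paper to once-punctured torus bundles: in general the decorated twisted Ptolemy variety is not known to be smooth, or even reduced, at the geometric point, so $J$ need not have the expected rank and the two-term complex above may fail to be defined. Granting the (conjectural) smoothness of the geometric point, the argument sketched above goes through verbatim; without it one would have to replace $J$ by a cotangent-complex refinement and prove that the higher Ptolemy obstructions match the higher twisted homology of $M$ --- the latter vanishes in the relevant degrees for a cusped hyperbolic $M$, but the corresponding vanishing on the Ptolemy side is open. A secondary, purely bookkeeping difficulty is that the cocycle comparison is canonical only up to the choices (orderings of tetrahedra and of edges, vertex normalizations, a branch of a square root) to which Reidemeister torsion is insensitive, so one must check that every such choice changes both sides by the same unit of $\Zbb[t^{\pm1}]$; this is a finite but delicate verification, best organized through the explicit change-of-basis matrices attached to the 2--3 move.
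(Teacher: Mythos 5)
The statement you were given is a conjecture, and the paper does not prove it in general either: its unconditional result (Theorem~\ref{thm.main2}) is the special case of hyperbolic once-punctured torus bundles. Your first step --- rewriting $\tau^\CS(\Tcal,t)$ as (essentially) the determinant of the Jacobian of the $\sigma$-Ptolemy equations taken over a fundamental domain of the infinite cyclic cover, with no combinatorial flattening needed --- is exactly the paper's Theorem~\ref{thm.twistPtolemy}; the ``redundancies'' you mention appear there as the extra lifted-edge relations $p^\sigma_{N+1},\ldots,p^\sigma_{N+L}$ and a prefactor $1/(c_1\cdots c_N)$ rather than as discarded rows, but up to this point you and the paper agree.

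The gap is in your second step. Asserting that differentiating the Ptolemy-to-cocycle construction identifies your two-term Jacobian complex with the $(\mathrm{Ad}\rho\otimes\alpha)$-twisted chain complex of $M$ is not a reduction of the problem; it \emph{is} the problem. No such quasi-isomorphism is known, and the difficulty is not confined to smoothness of the geometric point of the Ptolemy variety: even granting smoothness you have neither constructed the chain map, nor shown it is a quasi-isomorphism, nor controlled the torsion of the peripheral summands. The appeal to 2--3 moves does not help either, since Pachner invariance is already known for both sides separately and supplies no comparison between them. The paper closes the argument only for once-punctured torus bundles, and by a different mechanism that uses the fibration essentially: the Mayer--Vietoris computation for a mapping torus gives $\tau(M_\varphi,t)=\det(tI-\varphi^\ast)$ with $\varphi^\ast$ acting on $H^1(F;\sl)\cong T_{[\eta]}\Xcal(F)$, this tangent action is realized as the $3\times 3$ Jacobian of the monodromy on the Ptolemy coordinates of the fiber (Theorem~\ref{thm.ATAP}), and an explicit elimination-of-variables identity (Lemma~\ref{lem.red}) collapses the full $(N{+}3)\times(N{+}3)$ Ptolemy Jacobian of Theorem~\ref{thm.twistPtolemy} to that $3\times 3$ one. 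As written, your proposal is a program with its central step unproved, not a proof; to make it unconditional you would need either the fibered reduction above or a genuinely new idea for the chain-level comparison.
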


	Our original motivation was to study Conjecture~\ref{conj.twist1loop} for punctured surface bundles over $S^1$.
	As is well-known, twisted Alexander polynomials of those manifolds are simply determined by monodromies.
	In particular,
	adjoint twisted Alexander polynomials are determined by how monodromies act on the $\mathrm{SL}_2(\Cbb)$-character varieties of punctured surfaces.
	This leads us to consider so-called Ptolemy coordinates or assignments, instead of shape parameters of tetrahedra, as we need coordinates for the character varieties of punctured surfaces.
	Ptolemy coordinates 
	are well-studied objects in both 2 and 3-dimensional hyperbolic geometry but in different languages (see, for instance, \cite{fock2006moduli,penner2012decorated,GTZ}).
	An essential property of Ptolemy coordinates is that they effectively parameterize (certain subsets of)  character varieties of both punctured surfaces and cusped 3-manifolds in the same fashion.
	This allows us to compute adjoint twisted Alexander polynomials of punctured surface bundles easily in terms of Ptolemy coordinates (see Theorem~\ref{thm.ATAP} for once-punctured torus bundles).
	Note that we will use a sign-extended version of Ptolemy assignments,   $\sigma$-Ptolemy assignments, developed in \cite{Yoon19} (there, these were called $\sigma$-deformed Ptolemy assignments) due to a technical reason that the geometric representation of a cusped hyperbolic manifold does not admit an $\mathrm{SL}_2(\Cbb)$-lift which is boundary-unipotent \cite{Cal06}.

	The goal of this paper is to relate  the twisted 1-loop invariant with Ptolemy coordinates. Precisely, we present an alternative definition of the twisted 1-loop invariant as well as the 1-loop invariant in terms of the Jacobian of Ptolemy coordinates~~(see Theorem~\ref{thm.twistPtolemy} and Corollary~\ref{cor.1loop}).
	An advantage of this definition is that it does not require a combinatorial flattening, an extra datum which is needed in the original definition of (twisted) 1-loop invariant  in \cite{DG1} and \cite{GY21}.
	Interestingly, this resembles the fact that we need a combinatorial flattening to compute the Chern-Simons invariant in terms of the shapes of tetrahedra \cite{neumann2004extended} but we do not if we use Ptolemy coordinates \cite{Zic09, GTZ}.

%	We note that we will use a sign-extended version of Ptolemy coordinates,   $\sigma$-Ptolemy coordinates developed in \cite{Yoon19} (there, these were called $\sigma$-deformed Ptolemy coordinates), since the geometric representation of cusped hyperbolic 3-manifold does not admit an $\mathrm{SL}_2(\Cbb)$-lift which is boundary-unipotent.

	As an application of Theorem~\ref{thm.twistPtolemy}, 
	we prove that Conjecture~\ref{conj.twist1loop} is true for all hyperbolic once-punctured torus bundles (which was our original motivation; most of the techniques used in the proof can be applied to higher genus surface bundles). To state the theorem precisely, let $M_\varphi$ be the once-punctured torus bundle over $S^1$ with monodromy  $\varphi$ and   $\alpha :\pi_1(M_\varphi) \twoheadrightarrow \Zbb$ be the epimophism induced from the projection map $M_\varphi \rightarrow S^1$. 
	We choose an ideal triangulation $\Tcal_\varphi$ of $M_\varphi$ as the canonical triangulation (also called the monodromy triangulation)~\cite{FH82,lackenby2003canonical,Gue06}.
	
	\begin{theorem} \label{thm.main2}
		The twisted 1-loop invariant  $\tau^\CS(\Tcal_\varphi,t)$  equals to the adjoint twisted Alexander polynomial $\tau(M_\varphi,t)$ of $M_\varphi$ for all hyperbolic once-punctured torus bundles $M_\varphi$.
	\end{theorem}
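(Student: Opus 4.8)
The plan is to reduce Theorem~\ref{thm.main2} to the identification of two explicitly computable quantities: the twisted 1-loop invariant of the monodromy triangulation $\Tcal_\varphi$, computed via the Jacobian of Ptolemy coordinates as in Theorem~\ref{thm.twistPtolemy}, and the adjoint twisted Alexander polynomial of $M_\varphi$, computed via the monodromy action on the $\mathrm{SL}_2(\Cbb)$-character variety of the once-punctured torus as in Theorem~\ref{thm.ATAP}. The key point is that both sides, once expressed in terms of $\sigma$-Ptolemy assignments, should turn out to be \emph{the same} determinantal expression, so that the theorem follows by matching them term by term.

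First I would set up the combinatorics of the monodromy triangulation. Writing $\varphi$ as a word in the standard generators $L = \bigl(\begin{smallmatrix}1&1\\0&1\end{smallmatrix}\bigr)$ and $R = \bigl(\begin{smallmatrix}1&0\\1&1\end{smallmatrix}\bigr)$ of the mapping class group of the once-punctured torus, the canonical triangulation $\Tcal_\varphi$ consists of one tetrahedron per letter, layered along a cyclic sequence of ideal triangulations of the torus related by diagonal exchanges. I would record, for each layer, the three Ptolemy variables on the current triangle fibre and how they transform under a single flip: a Ptolemy variable $c''$ on the new diagonal satisfies a Ptolemy relation $c c'' = a a' + b b'$ (sign-extended in the $\sigma$-Ptolemy setting of \cite{Yoon19}) in terms of the four surrounding edges. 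Iterating over the word for $\varphi$ and then imposing the monodromy gluing (which identifies the top fibre with the bottom fibre up to the action of $\varphi$ on Ptolemy coordinates) gives a finite system of Ptolemy equations whose Jacobian, with respect to a suitable choice of variables and an equation to delete corresponding to the peripheral curve $\alpha$, is precisely $\tau^\CS(\Tcal_\varphi,t)$ by Theorem~\ref{thm.twistPtolemy}.

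Next I would compute the adjoint twisted Alexander polynomial side. Since $M_\varphi$ fibres over $S^1$ with fibre $F$ the once-punctured torus and $\alpha$ is the fibration epimorphism, a Wang-sequence / mapping-torus argument shows $\tau(M_\varphi,t) \doteq \det\bigl(t\cdot\varphi_* - \mathrm{Id}\bigr)$ acting on the twisted homology $H_1$ of $F$ with coefficients in the adjoint representation $\mathrm{Ad}\circ\rho$, where $\rho$ is the geometric representation; this is the content I would invoke from Theorem~\ref{thm.ATAP}. The adjoint representation $\mathfrak{sl}_2(\Cbb)$ is $3$-dimensional and $H_1(F;\mathrm{Ad}\rho)$ can be presented using the two free-group generators of $\pi_1(F)$, so this determinant is a polynomial in $t$ whose coefficients are polynomial in the character-variety coordinates of $F$ — which, via the $\sigma$-Ptolemy parameterization, are the same Ptolemy variables that appeared in the triangulation computation. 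The flip moves of the monodromy triangulation realize exactly the diagonal-exchange coordinate changes on the character variety, so the Jacobian built from the Ptolemy equations and the matrix $t\,\varphi_* - \mathrm{Id}$ should be two avatars of one object.

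The main obstacle I anticipate is the bookkeeping needed to show that these two determinants literally agree, rather than merely having the same size and degree: one must track signs (hence the use of $\sigma$-Ptolemy assignments, necessitated by the non-existence of a boundary-unipotent lift \cite{Cal06}), the correct normalization up to units in $\Zbb[t^{\pm 1}]$, and the precise correspondence between "deleting the row/column for the curve $\gamma$" in the 1-loop Jacobian and "using the fibre class" in the twisted Alexander computation. I would handle this by an inductive argument on the length of the word for $\varphi$: each added letter multiplies both sides by an explicit elementary ($3\times 3$ block, or $2\times 2$ Ptolemy-flip) factor, and I would check that these elementary factors coincide. A secondary technical point is verifying that the monodromy triangulation is genuinely the geometric ideal triangulation for \emph{every} hyperbolic $\varphi$ (i.e. that the shape parameters realize the complete hyperbolic structure), but this is classical \cite{Gue06,lackenby2003canonical} and I would simply cite it. Combining the inductive step with the base case (a single flip) then yields $\tau^\CS(\Tcal_\varphi,t) \doteq \tau(M_\varphi,t)$, which is Theorem~\ref{thm.main2}; Conjecture~\ref{conj.twist1loop} for once-punctured torus bundles is the special case, and recovering Conjecture~\ref{conj.1loop} follows by specializing $t = 1$.
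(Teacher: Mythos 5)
Your overall strategy matches the paper's: use Theorem~\ref{thm.twistPtolemy} to write $\tau^\CS(\Tcal_\varphi,t)$ as a Jacobian in Ptolemy variables, use Theorem~\ref{thm.ATAP} to write $\tau(M_\varphi,t)$ as $\det(tI - d\varphi_*)$ on the tangent space to $\Xcal(F)$ in the same Ptolemy coordinates, and identify the two. However, there are several concrete gaps between what you propose and an actual proof.

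First, there is no ``equation to delete corresponding to the peripheral curve $\alpha$'' in the twisted case: Theorem~\ref{thm.twistPtolemy} already gives $\tau^\CS(\Tcal_\varphi,t)$ as the full $(N+3)\times(N+3)$ Jacobian determinant divided by $c_1\cdots c_N$; the row-deletion/derivative mechanism is only for recovering the untwisted 1-loop invariant (Corollary~\ref{cor.1loop}). Second, an induction ``on the length of the word for $\varphi$'' is not a well-posed induction: shortening the word changes the manifold, hence the identifications $p^\sigma_{N+1},p^\sigma_{N+2},p^\sigma_{N+3}$ and the whole Jacobian. The correct mechanism is a chain-rule (Schur-complement) reduction \emph{within the one fixed Jacobian} for $\Tcal_\varphi$: after rescaling each Ptolemy relation to the form $\overline{p}^\sigma_i = c_i - (\text{rational function of earlier }c_k)$, each $\partial\overline{p}^\sigma_i/\partial c_i = 1$, so one can eliminate $c_N, c_{N-1},\dots,c_1$ one at a time without changing the determinant (this is Lemma~\ref{lem.red} in the paper), arriving at the $3\times 3$ Jacobian $\det\bigl(I - t\,\partial(c_{N-1},c_{N-t_n},c_N)/\partial(c_{N+1},c_{N+2},c_{N+3})\bigr)$. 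This is a statement about one determinant, not a comparison of two families of elementary factors. Third, that rescaling requires the identity $\prod_i c_{i'} = \prod_i c_i$, which you would need to justify via the flattening condition $\mathbf{G}f + \mathbf{G}'f' + \mathbf{G}''f'' = (2,\dots,2)^T$ with $f=(1,\dots,1)^T$; you don't mention this. Fourth, after the reduction the Ptolemy side gives $\det(I - tA)$ while Theorem~\ref{thm.ATAP} gives $\det(tI - A)$; reconciling these requires invoking the palindromic symmetry $\tau(M_\varphi,t)\doteq\tau(M_\varphi,t^{-1})$ of the adjoint twisted Alexander polynomial, a step absent from your proposal. Each of these is fixable, but as written the ``inductive step'' you lean on is the part that actually needs the most care and is the one you have not supplied.
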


%	
%	Ptolemy assignments play a role of bridge connecting the twist 1-loop invariant and the adjoint twisted Alexander polynomial for once-punctured torus bundles.
%	
	
	As a corollary of Theorem~\ref{thm.main2}, we obtain that the 1-loop conjecture~\cite[Conj.1.8]{DG1}  holds for all hyperbolic once-punctured torus bundles.
	
	\begin{corollary} \label{thm.main3}
		The 1-loop invariant $\tau^\CS_\gamma(\Tcal_\varphi)$  equals to the adjoint Reidemeister torsion $\tau_\gamma(M_\varphi)$  of $M_\varphi$ for  all peripheral curves $\gamma$ and hyperbolic once-punctured torus bundles $M_\varphi$.
	\end{corollary}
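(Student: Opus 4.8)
The plan is to derive this as a direct consequence of Theorem~\ref{thm.main2} together with the general relationship between the twisted objects (Laurent polynomials in $t$) and their untwisted counterparts (elements of the trace field), obtained by specializing $t$ appropriately or by comparing leading behaviors. On the torsion side, the adjoint twisted Alexander polynomial $\tau(M_\varphi, t)$ associated with the geometric representation and the epimorphism $\alpha$ is, up to units in $\Zbb[t^{\pm 1}]$, a Reidemeister torsion of $M_\varphi$ with coefficients in the module $(\mathfrak{sl}_2(\Cbb) \otimes \Cbb[t^{\pm1}])_{\mathrm{Ad}\,\rho \otimes \alpha}$; by Milnor's theorem relating the torsion of an infinite cyclic cover to that of the base with twisted coefficients, and by the standard behavior of torsion under change of coefficient ring, one recovers $\tau_\gamma(M_\varphi)$ from $\tau(M_\varphi, t)$ by evaluating at the value of $t$ dictated by the peripheral curve $\gamma$ under $\alpha$ (concretely, since $\alpha$ comes from the fibration $M_\varphi \to S^1$, one tracks the image $\alpha(\gamma)$ and uses the fibered-manifold torsion formula so that $\tau(M_\varphi,t)$ evaluated suitably gives $\tau_\gamma(M_\varphi)$, up to sign).

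On the 1-loop side, the twisted 1-loop invariant $\tau^\CS(\Tcal_\varphi, t)$ is built from the twisted gluing equation matrices of \cite{GY21}, whose entries lie in $\Zbb[t^{\pm1}]$ and which reduce to the ordinary gluing equation matrices at $t = 1$ (after forgetting the cyclic-cover bookkeeping). Hence setting $t$ to the appropriate value—matching the specialization used on the torsion side—the twisted 1-loop invariant $\tau^\CS(\Tcal_\varphi, t)$ reduces to the ordinary 1-loop invariant $\tau^\CS_\gamma(\Tcal_\varphi)$ for the corresponding peripheral curve $\gamma$. This is the same kind of specialization statement already implicit in \cite{DG1, GY21}, and for once-punctured torus bundles it can be checked directly from the explicit form of the monodromy triangulation $\Tcal_\varphi$ and its twisted gluing data.

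Combining the two specializations: Theorem~\ref{thm.main2} gives $\tau^\CS(\Tcal_\varphi, t) \doteq \tau(M_\varphi, t)$ as Laurent polynomials up to units; applying the common specialization to both sides and using the two reduction statements above yields $\tau^\CS_\gamma(\Tcal_\varphi) \doteq \tau_\gamma(M_\varphi)$ up to sign, which is exactly the assertion of the corollary. One must check that the specialization is compatible on the two sides—i.e.\ that the value of $t$ extracting $\tau_\gamma$ from $\tau(M_\varphi,t)$ is the same value extracting $\tau^\CS_\gamma(\Tcal_\varphi)$ from $\tau^\CS(\Tcal_\varphi,t)$—but this follows because both specializations are governed by the single epimorphism $\alpha$ and the image $\alpha(\gamma)$, and one should also confirm that no spurious unit factor in $t$ survives the specialization (a unit $\pm t^k$ evaluates to $\pm 1$, so this is harmless up to the ambient sign ambiguity).

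The main obstacle I expect is making the specialization $t \mapsto (\text{value determined by }\gamma)$ rigorous and checking that it genuinely recovers the untwisted 1-loop invariant on the nose rather than up to an uncontrolled factor: one needs that the ordinary 1-loop invariant $\tau^\CS_\gamma(\Tcal_\varphi)$ depends on $\gamma$ through exactly the peripheral data that the twisted construction encodes via $\alpha$, and that the dependence matches. For once-punctured torus bundles this is tractable because $H_1$ is small, the peripheral subgroup is explicit, and the canonical triangulation $\Tcal_\varphi$ has a completely combinatorial description, so the required compatibility can be verified by a direct computation with the twisted gluing equation matrices; the general-manifold version of this specialization statement would be harder, but is not needed here.
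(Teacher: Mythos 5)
There is a genuine gap, and it occurs at both of the places your argument leans on a ``specialization of $t$.'' First, the passage from the twisted invariants to the untwisted ones is not given by evaluating $t$ at a value determined by $\gamma$. Both $\tau(M_\varphi,t)$ and $\tau^{\CS}(\Tcal_\varphi,t)$ vanish at $t=1$ (this is part of \cite[Thm.1.7]{GY21}, and is visible in Example~\ref{ex.twist}), and the correct relation is that the \emph{derivative} $\frac{d}{dt}\big|_{t=1}$ of each recovers, up to sign, the untwisted invariant associated to the \emph{single} peripheral curve $\lambda$ with $\alpha(\lambda)=0$ (the boundary of the fiber); see Corollary~\ref{cor.1loop} and the torsion-side analogue in \cite[Thm.1.7]{GY21}. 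There is no $t$-value whose evaluation yields $\tau_\gamma$ or $\tau^{\CS}_\gamma$ for a general $\gamma$ --- for instance for $\mu$ with $\alpha(\mu)=1$ no such specialization exists --- so the ``common specialization dictated by $\alpha(\gamma)$'' that your argument hinges on cannot be made rigorous.

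Second, even after correcting this, the derivative-at-$t=1$ mechanism only proves $\tau^{\CS}_\lambda(\Tcal_\varphi)=\tau_\lambda(M_\varphi)$ for that one curve $\lambda$, and your proposal contains no step that propagates the equality to all peripheral curves. This is a genuinely separate ingredient: the dependence of $\tau_\gamma$ on $\gamma$ is governed by Porti's change-of-curve formula for the adjoint torsion, and one needs the matching change-of-curve formula for the 1-loop invariant. The paper closes this gap by citing \cite[Sec.5.3]{Siejakowski2017}, which shows that if Conjecture~\ref{conj.1loop} holds for one peripheral curve then it holds for all. Without either that result or a direct verification of the two change-of-curve formulas for $\Tcal_\varphi$, the corollary as stated (for \emph{all} $\gamma$) does not follow from Theorem~\ref{thm.main2}.
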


	The paper is organized as follows. In Section~\ref{sec.twist}, we briefly recall the definition of the twisted 1-loop invariant.
	In Section~\ref{sec.jac}, we present an alternative definition of the twisted 1-loop invariant in terms of the Jacobian of Ptolemy coordinates.
	In Section~\ref{sec.POTB}, we explicitly compute the twisted 1-loop invariant (Section~\ref{sec.twistOPT}) and Ptolemy assignments (Section~\ref{sec.PtolemyOncePun})  	for once-punctured torus bundles
	and present a formula for the adjoint twisted Alexander polynomial in terms of Ptolemy assignments (Section~\ref{sec.ATAP}).
	Based on computations in Section~\ref{sec.POTB}, we prove Theorem~\ref{thm.main2} and Corollary~\ref{thm.main3} in Section~\ref{sec.proof}. \\
	
	\noindent \textbf{Acknowledgments.} 
	The author would like to thank Jinsung Park who gave motivation of this work in the first place, and Stavros Garoufalidis, Hyuk Kim, Seonhwa Kim and Joan Porti for their helpful comments. 
	The author was supported by Basic Science Research Program through the NRF of Korea funded by the Ministry of Education (2020R1A6A3A03037901).

	\section{The twisted 1-loop invariant}  \label{sec.twist}
	Let $M$ be a 1-cusped hyperbolic 3-manifold equipped with an epimorphism $\alpha : \pi_1(M) \twoheadrightarrow \Zbb$
	and $\Tcal$ be an ideal triangulation of $M$.
	Let $\widetilde{M}$ be the infinite cyclic cover of $M$ corresponding to the kernel of $\alpha$ and  $\widetilde{\Tcal}$ be the ideal triangulation of $\widetilde{M}$ induced from $\Tcal$.
	We index the edges $e_i$ and the tetrahedra 
	$\Delta_j$ of $\Tcal$ by $1 \leq i,j \leq N$ (the number of tetrahedra is equal to that of edges, as the Euler characteristic of $M$ is zero).
	We choose a fundamental domain of $M$ in $\widetilde{M}$ and denote by $\widetilde{e}_i$ and $\widetilde{\Delta}_j$ the lifts of $e_i$ and $\Delta_j$, respectively, to the fundamental domain.	 Then an edge and a tetrahedron of $\widetilde{\Tcal}$ are given by $g^k \cdot \widetilde{e}_i$ and $g^k \cdot \widetilde{\Delta}_j$ for some $k \in \Zbb$ and $1 \leq i,j \leq N$ where $g$ is a generator of the deck transformation group $\Zbb$ of $\widetilde{M}$.

	A quad type
	of a tetrahedron is a pair of opposite edges; hence each tetrahedron has three different
	quad types. We choose a quad type and an orientation of each $\Delta_j$ so that each edge of
	$\Delta_j$ admits a shape parameter among 
	\[z_j, \quad z'_j=\frac{1}{1-z_j}, \quad z''_j=1- \frac{1}{z_j} \in \Cbb \setminus \{0,1\}\]
	with opposite edges having same parameters (see Figure \ref{fig.tetrahedron}).
	We denote by $\square$ the three pairs of opposite edges of a tetrahedron
	so that the edges of $\square$ are assigned to the shape parameter $z^\square$.
	
	\begin{figure}[htpb!]
		%% Creator: Inkscape 1.0.2 (e86c8708, 2021-01-15), www.inkscape.org
%% PDF/EPS/PS + LaTeX output extension by Johan Engelen, 2010
%% Accompanies image file 'tetrahedron.pdf' (pdf, eps, ps)
%%
%% To include the image in your LaTeX document, write
%%   \input{<filename>.pdf_tex}
%%  instead of
%%   \includegraphics{<filename>.pdf}
%% To scale the image, write
%%   \def\svgwidth{<desired width>}
%%   \input{<filename>.pdf_tex}
%%  instead of
%%   \includegraphics[width=<desired width>]{<filename>.pdf}
%%
%% Images with a different path to the parent latex file can
%% be accessed with the `import' package (which may need to be
%% installed) using
%%   \usepackage{import}
%% in the preamble, and then including the image with
%%   \import{<path to file>}{<filename>.pdf_tex}
%% Alternatively, one can specify
%%   \graphicspath{{<path to file>/}}
%% 
%% For more information, please see info/svg-inkscape on CTAN:
%%   http://tug.ctan.org/tex-archive/info/svg-inkscape
%%
\begingroup%
  \makeatletter%
  \providecommand\color[2][]{%
    \errmessage{(Inkscape) Color is used for the text in Inkscape, but the package 'color.sty' is not loaded}%
    \renewcommand\color[2][]{}%
  }%
  \providecommand\transparent[1]{%
    \errmessage{(Inkscape) Transparency is used (non-zero) for the text in Inkscape, but the package 'transparent.sty' is not loaded}%
    \renewcommand\transparent[1]{}%
  }%
  \providecommand\rotatebox[2]{#2}%
  \newcommand*\fsize{\dimexpr\f@size pt\relax}%
  \newcommand*\lineheight[1]{\fontsize{\fsize}{#1\fsize}\selectfont}%
  \ifx\svgwidth\undefined%
    \setlength{\unitlength}{114.47727954bp}%
    \ifx\svgscale\undefined%
      \relax%
    \else%
      \setlength{\unitlength}{\unitlength * \real{\svgscale}}%
    \fi%
  \else%
    \setlength{\unitlength}{\svgwidth}%
  \fi%
  \global\let\svgwidth\undefined%
  \global\let\svgscale\undefined%
  \makeatother%
  \begin{picture}(1,0.84370128)%
    \lineheight{1}%
    \setlength\tabcolsep{0pt}%
    \put(0,0){\includegraphics[width=\unitlength,page=1]{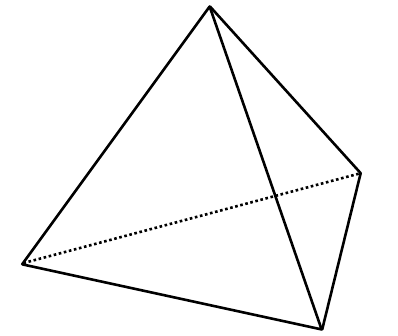}}%
    \put(0.22612007,0.52498993){\makebox(0,0)[lt]{\lineheight{1.25}\smash{\begin{tabular}[t]{l}$z$\end{tabular}}}}%
    \put(0.88209508,0.18215681){\makebox(0,0)[lt]{\lineheight{1.25}\smash{\begin{tabular}[t]{l}$z$\end{tabular}}}}%
    \put(0.72426566,0.63173901){\makebox(0,0)[lt]{\lineheight{1.25}\smash{\begin{tabular}[t]{l}$z''$\end{tabular}}}}%
    \put(0,0){\includegraphics[width=\unitlength,page=2]{tetrahedron.pdf}}%
    \put(0.60780402,0.46139955){\makebox(0,0)[lt]{\lineheight{1.25}\smash{\begin{tabular}[t]{l}$z'$\end{tabular}}}}%
    \put(0,0){\includegraphics[width=\unitlength,page=3]{tetrahedron.pdf}}%
    \put(0.45336339,0.2695276){\makebox(0,0)[lt]{\lineheight{1.25}\smash{\begin{tabular}[t]{l}$z'$\end{tabular}}}}%
    \put(0,0){\includegraphics[width=\unitlength,page=4]{tetrahedron.pdf}}%
    \put(0.42299074,0.07344528){\color[rgb]{0,0,0}\makebox(0,0)[lt]{\lineheight{1.25}\smash{\begin{tabular}[t]{l}$z''$\end{tabular}}}}%
  \end{picture}%
\endgroup%

		\caption{An ideal tetrahedron with shape parameters $z^\square$.}
		\label{fig.tetrahedron}
	\end{figure}
	
	The gluing equation matrices $\mathbf{G}^\square$ of $\Tcal$ are the $N \times N$  matrices (whose rows and columns are indexed by the edges and by the tetrahedra of $\Tcal$
	respectively and) whose $(i,j)$-entry $\mathbf{G}^\square_{ij}$ is the number
	of edges of $\Delta_j$ with parameter $z^\square_j$ that are incident to $e_i$ in $\Tcal$. 
	The gluing equation matrices basically show which tetrahedra are attached to  $e_i$. In particular, the gluing equation of $e_i$  is given by
	\begin{equation} \label{eqn.ge}
		e_i : \quad	\prod_{j=1}^N z_j^{\mathbf{G}_{ij}} {z_j'}^{\mathbf{G}_{ij}'}{z_j''}^{\mathbf{G}_{ij}''} = 1 .
	\end{equation}
	In a similar manner, we consider tetrahedra of $\widetilde{\Tcal}$ that are attached to $\widetilde{e}_i$. Precisely, for $k \in \Zbb$ let $\mathbf{G}_k^\square$ be the
	$N \times N$  matrix
	whose $(i,j)$-entry is the number of edges of $g^k \cdot \widetilde{\Delta}_j$ with
	parameter $z_j^\square$ that are incident to  $\widetilde{e}_i$ in $\widetilde{\Tcal}$. We then define the \emph{twisted gluing equation matrices} $\mathbf{G}^\square(t)$ of $\Tcal$  by
	\begin{equation*}
	\mathbf{G}^\square(t):=\sum_{k \in \Zbb} \mathbf{G}^\square_k\, t^k .
	\end{equation*}
	Note that entries of $\mathbf{G}^\square(t)$ are in $\Zbb[t^{\pm1}]$ and that $\mathbf{G}^\square(t)$  reduces to $\mathbf{G}^\square$ when $t=1$.

		A peripheral curve is an oriented, homotopically non-trivial,
	simple closed curve in the peripheral torus of $M$. Each peripheral curve
	$\gamma$ is associated with three row vectors $\mathbf{C}_\gamma^\square\in\Zbb^N$ that describe its completeness equation as
	\begin{equation} \label{eqn.comp}
		\gamma : \quad	\prod_{j=1}^N z_j^{\mathbf{C}_{\gamma j}} {z_j'}^{\mathbf{C}_{\gamma j}'}{z_j''}^{\mathbf{C}_{\gamma j}''} = 1
		\end{equation}
	where $\mathbf{C}^\square_{\gamma j}$ is the $j$-th component of $\mathbf{C}^\square_\gamma$. 	 We say that a point $(z_1,\ldots, z_N) \in (\Cbb \setminus \{0,1\})^N$ is a \emph{solution} of $\Tcal$ if it satisfies the gluing equation~\eqref{eqn.ge} for all edges of $\Tcal$ and the completeness equation~\eqref{eqn.comp} for all peripheral curves of $M$ and that a solution is \emph{geometric} if its holonomy representation $\pi_1(M)\rightarrow \mathrm{PSL}_2(\Cbb)$ is the geometric representation of $M$.
	Throughout the paper, we assume that a geometric solution of $\Tcal$ exists.
%	 namely, $\Tcal$ supports the geometric representation of $M$.
	
 	A \emph{combinatorial flattening} of $\Tcal$ consists of three column vectors $f^\square \in \Zbb^N$ satisfying
	\begin{align}
		f+ f' +  f'' & =(1,\ldots,1)^T, \label{eqn.f1}\\
		\mathbf{G} f+\mathbf{G}' f' + \mathbf{G}'' f'' & =(2,\ldots,2)^T, \label{eqn.f2}\\
		\mathbf{C}_\gamma f+\mathbf{C}'_\gamma f' + \mathbf{C}''_\gamma f'' &=0	 \label{eqn.f3}
	\end{align}
	for all peripheral curves $\gamma$. 
	Note that every ideal triangulation has a combinatorial flattening~\cite[Thm.4.5]{Neumann04}.

	\begin{definition}[\cite{GY21}]
	With the above notation, the \emph{twisted 1-loop invariant} $\tau^\CS(\Tcal,t)$ of $\Tcal$ is defined as
	\begin{equation} \label{eqn.twist1loop}
		\tau^\CS(\Tcal,t)
		:= \frac{\det\big(
			\mathbf{G}(t)\, \mathrm{diag}(\zeta)+\mathbf{G}'(t) \,
			\mathrm{diag}(\zeta')+\mathbf{G}''(t) \,
			\mathrm{diag}(\zeta'') \big)}{
			\displaystyle\prod_{j=1}^N\zeta_j^{f_j} {\zeta_j'}^{f_j'} { \zeta_j''}^{f_j''}} \in \Cbb[t^{\pm1}]
	\end{equation}
	where the right-hand side is evaluated at the geometric solution of $\Tcal$, 
	\begin{equation} \label{eqn.zeta}
		\zeta_j = \frac{d \log z_j}{dz_j}=\frac{1}{z_j}, \quad
		\zeta_j' = \frac{d \log z_j'}{dz_j}=\frac{1}{1-z_j},\quad
		\zeta_j'' = \frac{d \log z_j''}{dz_j}=\frac{1}{z_j(z_j-1)},
	\end{equation}
	and $\mathrm{diag}(\zeta^\square)$ is the diagonal matrix with diagonal entries $\zeta^\square_1,\ldots, \zeta^\square_n$.		
	\end{definition}	
	Several choices, such as quad types of $\Delta_j$, a fundamental domain of $M$ and a flattening of $\Tcal$, are involved in the right-hand side of Equation~\eqref{eqn.twist1loop}. However, it turns out that
	the twisted 1-loop invariant $\tau^\CS(\Tcal,t)$ is well-defined up to multiplication of  units in $\Zbb[t^{\pm1}]$. Furthermore, it is invariant under 2--3 Pachner moves.
	We refer to \cite{GY21} for details.
	
	\begin{remark} \label{rmk.partial}
		The logarithmic form of the gluing equation \eqref{eqn.ge} is 
		\begin{equation} \label{eqn.gelog}
		 e_i:\quad \sum_{j=1}^N  \mathbf{G}_{ij} \log z_j  +  {\mathbf{G}_{ij}'} \log {z_j'} + {\mathbf{G}_{ij}''} \log {z_j''} = 2 \pi \sqrt{-1} \, .
		\end{equation}
		Since we fixed a fundamental domain, say $D$, of $M$, the infinite cyclic cover $\widetilde{M}$ is divided  into $g^k \cdot D$ for $k \in \Zbb$. Accordingly, the left-hand side of Equation \eqref{eqn.gelog} is divided with respect to $k\in \Zbb$: the contribution from $g^k \cdot D$ is
	\begin{equation} \nonumber 
		 \sum_{j=1}^N  (\mathbf{G}_k)_{ij} \log z_j  +  {(\mathbf{G}'_k)_{ij}} \log {z_j'} + {(\mathbf{G}''_k)_{ij}} \log {z_j''}	 \, .	 
 	 \end{equation}
	It naturally allows us to associate a Laurent polynomial $E_i $ to each edge $e_i$	
	\begin{equation}  \nonumber
		E_i=\sum_{k \in \Zbb} \left(\sum_{j=1}^N  (\mathbf{G}_k)_{ij} \log z_j  +  {(\mathbf{G}'_k)_{ij}} \log {z_j'} + {(\mathbf{G}''_k)_{ij}} \log {z_j''}	\right) t^k 
	\end{equation}
	which reduces to the left-hand side of Equation \eqref{eqn.gelog} when $t=1$. It follows from the definition~\eqref{eqn.zeta} that
	\begin{equation} \nonumber
		\big(
		\mathbf{G}(t)\, \mathrm{diag}(\zeta)+\mathbf{G}'(t) \,
		\mathrm{diag}(\zeta')+\mathbf{G}''(t) \,
		\mathrm{diag}(\zeta'') \big)_{ij} = \frac{\partial E_i}{\partial z_j}
	\end{equation}
	for $1 \leq i,j \leq N$.
	\end{remark}

\section{The Jacobian of $\sigma$-Ptolemy coordinates} \label{sec.jac}

In this section, we present an alternative definition of the twisted 1-loop invariant in terms of the Jacobian of $\sigma$-Ptolemy coordinates (see Theorem~\ref{thm.twistPtolemy} below).

\subsection{A single tetrahedron} \label{sec.tet}
We first consider a single tetrahedron $\Delta$ with its \emph{truncation} $\overline{\Delta}$, a polyhedron obtained from $\Delta$ by chopping off its vertices.	
There are two types of edges in $\overline{\Delta}$: an edge of a triangular face is called a \emph{short-edge} and an edge that is not a short-edge is called a \emph{long-edge}. 
Let $\delta \overline{\Delta}$ be the union of four triangular faces of $\overline{\Delta}$ so that the edges of $\delta \overline{\Delta}$ are the short-edges of $\overline{\Delta}$.
We denote by $\Ecal(X)$ for $X\in\{\Delta,\overline{\Delta}, \delta \overline{\Delta}\}$ the set of oriented edges of $X$. 
It is clear that $\Ecal( \delta \overline{\Delta}) \subset \Ecal(\overline{\Delta})$ and that $\Ecal(\Delta)$ is identified with $\Ecal(\overline{\Delta}) \setminus \Ecal(\delta \overline{\Delta})$ in an obvious way.

We say that an assignment $\sigma : \Ecal(\delta \overline{\Delta}) \rightarrow \{\pm1\}$ of a sign to each oriented short-edge is a \emph{cocycle} if
\begin{itemize}
	\item $\sigma(-e)= \sigma(e)^{-1}$ for $e \in \Ecal(\delta \overline{\Delta})$  where  $-e$ is the same edge $e$ but with the reversed orientation;
	\item 
	$\sigma(e_1) \sigma(e_2) \sigma (e_3) =1$ if $e_1,e_2$ and  $e_3$ form the boundary of a 2-cell of $\delta \overline{\Delta}$ in any cyclic order. 	
\end{itemize}
In fact, the orientations of short-edges do not matter as the target group is $\{\pm1\}$.

\begin{definition}[\cite{Yoon19}]
	For a cocycle $\sigma : \Ecal(\delta \overline{\Delta}) \rightarrow \{\pm1\}$
	an assignment  $c : \Ecal(\Delta) \rightarrow \Cbb^\ast$ is   called a \emph{$\sigma$-Ptolemy assignment (or coordinates)} if $c(-e)=-c(e)$ for all $e \in \Ecal(\Delta)$ and  
	\begin{equation} \label{eqn.sgPtolemy}
	c(l_{02})c(l_{13}) - \frac{\sigma(s_{031})\sigma(s_{302})}{\sigma(s_{312})\sigma(s_{021})} c(l_{03})c(l_{12}) - \frac{\sigma(s_{013})\sigma(s_{102})}{\sigma(s_{132})\sigma(s_{023})} c(l_{01})c(l_{23})   = 0
	\end{equation}
	where  $l_{ij}$ is the oriented edge of $\Delta=[v_0,v_1,v_2,v_3]$ starting from the vertex $v_i$ and heading to $v_j$ and $s_{ijk}$ is the oriented short-edge starting from $l_{ij}$ and heading to $l_{jk}$ as in Figure~\ref{fig.truncation}.  We refer to Equation~\eqref{eqn.sgPtolemy} as the \emph{$\sigma$-Ptolemy equation} $p^\sigma_\Delta$ of $\Delta$.		
	\begin{figure}[htpb!]
		\centering
		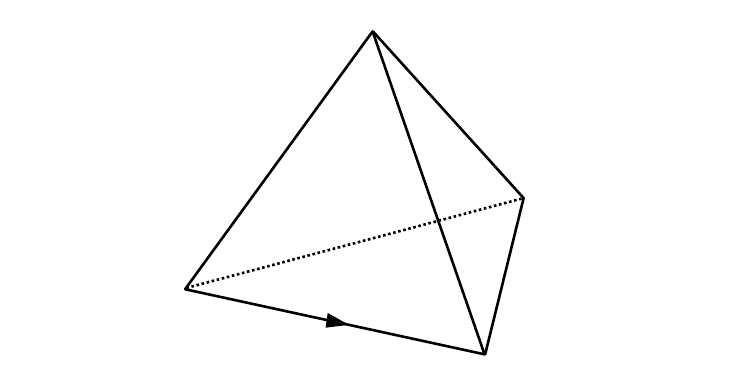
		\caption{An edge notation for a truncated tetrahedron.}
		\label{fig.truncation}
	\end{figure}	
\end{definition}
It is shown in \cite{Yoon19} that a  $\sigma$-Ptolemy assignment $c : \Ecal(\Delta) \rightarrow \Cbb^\ast$  determines  a cocycle $\rho : \Ecal(\overline{\Delta}) \rightarrow \mathrm{SL}_2(\Cbb)$ and the shape of $\Delta$.
Precisely, the cocycle $\rho$ is given by
\begin{equation} \label{eqn.Passign}
	\rho(l_{ij}) = \begin{pmatrix} 0 & -c(l_{ij})^{-1} \\
		c(l_{ij}) & 0 \end{pmatrix}, \quad
	\rho(s_{ijk}) = \begin{pmatrix}  \sigma(s_{ijk}) & -\frac{\sigma(s_{kij})}{\sigma(s_{jki})} \frac{ c(l_{ki})}{c(l_{ij})\,c(l_{jk})} \\ 0 & \sigma(s_{ijk})^{-1} \end{pmatrix} 
\end{equation}
for Figure~\ref{fig.truncation}
and the shape parameter $z$ at the edge $l_{13}$ is given by		
\begin{equation} \label{eqn.Zshape}
	z=  -\frac{\sigma(s_{103})\sigma(s_{321})}{\sigma(s_{032})\sigma(s_{210})}\,\frac{c(l_{01}) c(l_{23})}{c(l_{03})c(l_{12})} 	\, .
\end{equation}	
We refer to \cite[Sec.2]{Yoon19} for details.

\subsubsection{The Jacobian of the $\sigma$-Ptolemy equation}
We choose a quad type of $\Delta$ and an orientation of each edge of $\Delta$.
We denote by $e^\square_0$ and $e^\square_1$ two edges of $\Delta$ with shape parameter $z^\square$.

\begin{lemma} 	\label{lem.key}
	With the above notation, we have
	\[	\frac{c(e^\square_i)}{c(e_0)c(e_1)}\frac{\partial p^\sigma_\Delta}{\partial c(e^\square_i)}=\pm \frac{\zeta^{\square}}{\zeta }\]
	for $i=0,1$ and any $\square$ where the sign $\pm$ does not depend on $i$ and $\square$.
\end{lemma}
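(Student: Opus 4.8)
The plan is to compute both sides of the claimed identity directly from the explicit formulas \eqref{eqn.sgPtolemy}, \eqref{eqn.Zshape} and \eqref{eqn.zeta}, and observe they agree up to an overall sign that is independent of $i$ and $\square$. The key point is that the $\sigma$-Ptolemy equation $p^\sigma_\Delta$ is, up to the sign constants built from $\sigma$, a sum of three monomials of the shape $c(e)c(e')$ where $\{e,e'\}$ runs over the three quad types; differentiating with respect to $c(e^\square_i)$ kills the term not containing that edge and differentiates the two terms that do. Since each $c(e^\square_i)$ appears linearly (degree one) in $p^\sigma_\Delta$, the partial derivative $\partial p^\sigma_\Delta / \partial c(e^\square_i)$ is itself a sum of two monomials, each linear in the remaining Ptolemy coordinates.

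First I would fix notation: write the three quad types as $\square$ with paired edges $e^\square_0, e^\square_1$, and let $A^\square$ denote the sign constant (a $\pm1$ ratio of $\sigma$-values) multiplying the monomial $c(e^\square_0)c(e^\square_1)$ in $p^\sigma_\Delta$, so that $p^\sigma_\Delta = \sum_\square A^\square\, c(e^\square_0) c(e^\square_1)$ with a suitable normalization matching \eqref{eqn.sgPtolemy} (where one coefficient is $+1$). Then $\partial p^\sigma_\Delta / \partial c(e^\square_i) = A^\square c(e^\square_{1-i}) + (\text{the two cross terms from the other two quad types})$ — but more carefully, only the two quad types other than $\square$ contain exactly one of $e^\square_0, e^\square_1$? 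No: each edge lies in exactly one quad type of $\Delta$ together with its opposite, but each edge is shared with two of the three ``bipartition'' monomials. I would sort this out explicitly using the labelling $l_{ij}$ of Figure~\ref{fig.truncation}: with $e^\square_0 = l_{02}, e^\square_1 = l_{13}$ for one quad, the equation reads $c(l_{02})c(l_{13}) = (\pm) c(l_{03})c(l_{12}) + (\pm) c(l_{01})c(l_{23})$, and $l_{02}$ appears only in the first monomial, so $\partial p^\sigma_\Delta/\partial c(l_{02}) = c(l_{13})$, giving immediately $c(l_{02}) c(l_{13}) / (c(e_0) c(e_1)) \cdot 1$. The claim then reduces to recognizing $c(l_{02})c(l_{13})/(c(e_0)c(e_1))$ as $\pm \zeta^\square/\zeta$.

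Next I would use the shape formula \eqref{eqn.Zshape}, which expresses $z = z^{l_{13}}$ as $\pm c(l_{01})c(l_{23})/(c(l_{03})c(l_{12}))$; combined with the $\sigma$-Ptolemy equation this gives $z' = \pm c(l_{03})c(l_{12})/(c(l_{02})c(l_{13}))$ and $z'' = \pm c(l_{01})c(l_{23})/(c(l_{02})c(l_{13}))$ type relations (the three monomials of $p^\sigma_\Delta = 0$ are proportional to the three shape parameters). From \eqref{eqn.zeta}, $\zeta = 1/z$, $\zeta' = 1/(1-z) = z'/1$... more precisely $\zeta' = z'$ times something; I recall $\zeta = 1/z$, $\zeta'' = \zeta \zeta'$ up to the standard identities, and in any case each $\zeta^\square$ is, up to sign, a ratio of two of the three Ptolemy monomials. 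So $\zeta^\square / \zeta$ becomes a ratio of Ptolemy monomials, and matching it against $c(e^\square_i)/(c(e_0)c(e_1)) \cdot \partial p^\sigma_\Delta/\partial c(e^\square_i)$ is then a finite check over the three values of $\square$ and $i \in \{0,1\}$. The main obstacle I anticipate is bookkeeping: keeping the $\sigma$-sign constants and the edge-labelling conventions of Figure~\ref{fig.truncation} consistent across all six cases, and verifying that the leftover sign genuinely does not depend on $i$ or $\square$ (this should follow because the cocycle condition on $\sigma$ forces the three sign constants $A^\square$ in $p^\sigma_\Delta$ and the three sign constants in the $\zeta^\square$ formulas to cancel uniformly, but making that precise requires carefully unwinding the cocycle relations $\sigma(e_1)\sigma(e_2)\sigma(e_3) = 1$ around each triangular face). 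Once the six cases are tabulated, the equality $\frac{c(e^\square_i)}{c(e_0)c(e_1)}\frac{\partial p^\sigma_\Delta}{\partial c(e^\square_i)} = \pm\frac{\zeta^\square}{\zeta}$ reads off directly.
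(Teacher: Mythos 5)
Your proposal is correct and follows essentially the same route as the paper: fix a quad type $(e_0,e_1)=(l_{02},l_{13})$, differentiate the $\sigma$-Ptolemy equation (observing that each $c(e^\square_i)$ appears in exactly one monomial, as you eventually settle on after your brief false start), use the Ptolemy relation itself to trade the monomial $c(l_{02})c(l_{13})$ for the other two, and invoke the shape formula~\eqref{eqn.Zshape} together with the cocycle condition on $\sigma$ to match signs against $\zeta^\square/\zeta$. The paper carries out exactly this finite case check (Equations~\eqref{eqn.simplifying1}--\eqref{eqn.simplifying2}), noting that changing edge orientations contributes only a global sign independent of $i$ and $\square$.
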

\begin{proof} Suppose that the edges of $\Delta$ are oriented as in Figure~\ref{fig.truncation} and  a quad type of $\Delta$ is given by $(e_0,e_1)=(l_{02}, l_{13})$ so that $(e'_0, e'_1)= (l_{01},l_{23})$ and $(e''_0,e''_1)=(l_{03},l_{12})$.
	It follows that for $i=0,1$
	\begin{align} 
		\allowdisplaybreaks
		\frac{c(e_i)}{c(e_0)c(e_1)}\frac{\partial p^\sigma_\Delta}{\partial c(e_i)}&=1 = \frac{\zeta}{\zeta}, \nonumber \\[5pt]
		\frac{c(e'_i)}{c(e_0)c(e_1)}\frac{\partial p^\sigma_\Delta}{\partial c(e'_i)}&= - \frac{ \sigma(s_{013})\sigma(s_{102}) }{\sigma(s_{132})\sigma(s_{023})} \frac{c(l_{01}) c(l_{23})}{c(l_{02})c(l_{13})} \nonumber  \\ 
		&=-  
		\frac{c(l_{01}) c(l_{23})}{\frac{\sigma(s_{132})\sigma(s_{023})}{ \sigma(s_{013})\sigma(s_{102}) } \frac{\sigma(s_{031})\sigma(s_{302})}{\sigma(s_{312})\sigma(s_{021})} c(l_{03})c(l_{12}) + c(l_{01})c(l_{23})}\nonumber \\ \label{eqn.simplifying1}
		&=\frac{1}{1/z -1 }  = \frac{\zeta'}{\zeta},  \\[5pt]
		\frac{c(e''_i)}{c(e_0)c(e_1)}\frac{\partial p^\sigma_\Delta}{\partial c(e''_i)}&= - \frac{ \sigma(s_{031})\sigma(s_{302}) }{\sigma(s_{312})\sigma(s_{021})} \frac{c(l_{03}) c(l_{12})}{c(l_{02})c(l_{13})} \nonumber \\
		&=
		-  \frac{c(l_{03}) c(l_{12})}{ c(l_{03})c(l_{12}) + \frac{\sigma(s_{312})\sigma(s_{021})}{ \sigma(s_{031})\sigma(s_{302}) } \frac{\sigma(s_{013})\sigma(s_{102})}{\sigma(s_{132})\sigma(s_{023})} c(l_{01})c(l_{23})} \nonumber \\
		&=\frac{1}{-1+z }  = \frac{\zeta''}{\zeta}.		\label{eqn.simplifying2}
	\end{align}
	Note that Equations~\eqref{eqn.simplifying1}~and~\eqref{eqn.simplifying2} are followed from Equation~\eqref{eqn.Zshape} with the fact that $\sigma$ is a cocycle. If we change orientations of some edges of $\Delta$, then there could be an additional sign in the above computation, but it would not depend on $i$ and $\square$. We compute the other quad types $(l_{01},l_{23})$ and $(l_{03},l_{12})$ of $\Delta$ similarly.
\end{proof}

\begin{lemma} \label{lem.key2}
	Let $(f,f',f'') \in \Zbb^3$ be a triple of integers satisfying $f+f'+f''=1$. Then we have
	\begin{equation} \label{eqn.key2}
		\frac{c(e^\square_i)}{c^f_\Delta}\frac{\partial p^\sigma_\Delta}{\partial c(e^\square_i)}=\pm \frac{\zeta^\square}{\zeta^f_\Delta}
	\end{equation}
	for $i=0,1$ and any $\square$ where the sign $\pm$ does not depend on $i$ and $\square$, and
	\begin{align*}
		c^f_\Delta &:= c(e_0)^f c(e_1)^fc(e'_0)^{f'} c(e'_1)^{f'}c(e''_0)^{f''}c(e''_1)^{f''},\\
		\zeta^f_\Delta &:=		\zeta^f \zeta'^{f'} \zeta''^{f''}.
	\end{align*}
\end{lemma}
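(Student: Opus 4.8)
The plan is to obtain this as a direct consequence of Lemma~\ref{lem.key}. That lemma already gives
\[
\frac{c(e^\square_i)}{c(e_0)c(e_1)}\frac{\partial p^\sigma_\Delta}{\partial c(e^\square_i)}=\pm\frac{\zeta^\square}{\zeta}
\]
with a sign independent of $i$ and $\square$, so to prove~\eqref{eqn.key2} it suffices to show that
\[
\frac{c(e_0)c(e_1)}{c^f_\Delta}=\pm\frac{\zeta}{\zeta^f_\Delta},
\]
and then multiply the two relations. The sign in this second identity is automatically independent of $i$ and $\square$, since neither side involves those indices.

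To establish the second display, I would first note that, by its defining equation~\eqref{eqn.sgPtolemy}, the polynomial $p^\sigma_\Delta$ is a $\{\pm1\}$-linear combination of the three products $c(e_0)c(e_1)$, $c(e'_0)c(e'_1)$, $c(e''_0)c(e''_1)$ (reorienting edges via $c(-e)=-c(e)$ only changes some of these $\pm1$'s). Hence $\partial p^\sigma_\Delta/\partial c(e'_0)=\pm\,c(e'_1)$, and applying Lemma~\ref{lem.key} with $e^\square_i=e'_0$ gives $c(e'_0)c(e'_1)\big/\big(c(e_0)c(e_1)\big)=\pm\,\zeta'/\zeta$; similarly $c(e''_0)c(e''_1)\big/\big(c(e_0)c(e_1)\big)=\pm\,\zeta''/\zeta$. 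Writing $f=1-f'-f''$ using the hypothesis $f+f'+f''=1$, this yields
\[
\frac{c^f_\Delta}{c(e_0)c(e_1)}=\left(\frac{c(e'_0)c(e'_1)}{c(e_0)c(e_1)}\right)^{f'}\left(\frac{c(e''_0)c(e''_1)}{c(e_0)c(e_1)}\right)^{f''}=\pm\left(\frac{\zeta'}{\zeta}\right)^{f'}\left(\frac{\zeta''}{\zeta}\right)^{f''}=\pm\frac{\zeta^f\zeta'^{f'}\zeta''^{f''}}{\zeta}=\pm\frac{\zeta^f_\Delta}{\zeta},
\]
which is exactly the reciprocal of the required ratio.

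Multiplying the two identities then gives
\[
\frac{c(e^\square_i)}{c^f_\Delta}\frac{\partial p^\sigma_\Delta}{\partial c(e^\square_i)}=\frac{c(e_0)c(e_1)}{c^f_\Delta}\cdot\frac{c(e^\square_i)}{c(e_0)c(e_1)}\frac{\partial p^\sigma_\Delta}{\partial c(e^\square_i)}=\left(\pm\frac{\zeta}{\zeta^f_\Delta}\right)\left(\pm\frac{\zeta^\square}{\zeta}\right)=\pm\frac{\zeta^\square}{\zeta^f_\Delta},
\]
and since each of the two signs is independent of $i$ and $\square$, so is their product, which is the claim. I do not expect a genuine obstacle here: the argument is essentially a bookkeeping computation layered on top of Lemma~\ref{lem.key}, and the only point that demands care is checking that every $\pm1$ appearing in it is independent of $i$ and $\square$ — which is transparent, since $c^f_\Delta$, $c(e_0)c(e_1)$, $\zeta$ and $\zeta^f_\Delta$ are all defined without reference to those indices, and the sign furnished by Lemma~\ref{lem.key} already has this property. (One could alternatively rerun the explicit computation in the proof of Lemma~\ref{lem.key}, collecting the extra powers of $\zeta,\zeta',\zeta''$, but the reduction above is shorter.)
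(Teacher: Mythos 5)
Your proof is correct and takes essentially the same route as the paper: reduce to Lemma~\ref{lem.key} by showing $c^f_\Delta/(c(e_0)c(e_1))=\pm\,\zeta^f_\Delta/\zeta$ via $f=1-f'-f''$ and the ratio identities $c(e'_0)c(e'_1)/(c(e_0)c(e_1))=\pm\,\zeta'/\zeta$, $c(e''_0)c(e''_1)/(c(e_0)c(e_1))=\pm\,\zeta''/\zeta$. The only cosmetic difference is that you extract those ratio identities by re-applying Lemma~\ref{lem.key} together with $\partial p^\sigma_\Delta/\partial c(e'_0)=\pm c(e'_1)$, whereas the paper reads them off directly from the explicit computation inside the proof of Lemma~\ref{lem.key}.
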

\begin{proof}  As in the proof of Lemma~\ref{lem.key}, suppose that a quad type of $\Delta$ is  $(e_0,e_1)=(l_{02},l_{13})$ so that $(e'_0, e'_1)= (l_{01},l_{23})$ and $(e''_0,e''_1)=(l_{03},l_{12})$.  Then we have
	\begin{align*}
		\zeta^f \zeta'^{f'} \zeta''^{f''} & = \zeta^{1-f'-f''}\zeta'^{f'} \zeta''^{f''} \\
		&= \zeta \left(\frac{\zeta'}{\zeta}\right)^{f'}\left(\frac{\zeta''}{\zeta}\right)^{f''}\\
		&=\pm \zeta \left(\frac{c(l_{01})c(l_{23})}{c(l_{02})c(l_{13})}\right)^{f'}\left(\frac{c(l_{03})c(l_{12})}{c(l_{02})c(l_{13})}\right)^{f''} \\
		&= \pm \zeta \cdot \frac{ c(e_0)^f c(e_1)^fc(e'_0)^{f'} c(e'_1)^{f'}c(e''_0)^{f''}c(e''_1)^{f''} }{c(e_0)c(e_1)}.
	\end{align*}
	where the sign $\pm$ above only depends on the cocycle $\sigma$ and the triple $(f,f',f'')$.
	Combining this with Lemma~\ref{lem.key}, we obtain Equation~\eqref{eqn.key2}.
\end{proof}

Note that Lemma~\ref{lem.key2} reduces to Lemma~\ref{lem.key} when $(f,f',f'')=(1,0,0)$.

\subsection{$\sigma$-Ptolemy assignments for a 3-manifold}  \label{sec.ptolemy}
The notion of $\sigma$-Ptolemy assignment in Section~\ref{sec.tet} naturally extends to an ideal triangulation of a 3-manifold \cite{Yoon19}. We briefly recall its definition with some properties that we need in the following sections.

Let $M$ be a 1-cusped hyperbolic 3-manifold and  $\overline{M}$ be the compact manifold with a torus boundary whose interior is homeomorphic to $M$.
An ideal triangulation $\Tcal$ of $M$  endows $\overline{M}$ with a decomposition into truncated tetrahedra whose triangular faces triangulate the boundary $\partial \overline{M}$ of $\overline{M}$.
For simplicity we confuse this decomposition of $\overline{M}$ as well as that of $\partial \overline{M}$ with its underlying space.
We denote by $\Ecal(X)$ for $X\in\{\Tcal,\overline{M}, \partial \overline{M}\}$ the set of oriented edges of $X$. 

\begin{definition}[\cite{Yoon19}]
	For a cocycle	$\sigma : \Ecal(\partial \overline{M})  \rightarrow \{\pm1\}$ an assignment $c : \Ecal(\Tcal) \rightarrow \Cbb^\ast$ is called a \emph{$\sigma$-Ptolemy assignment (or coordinates)} if it satisfies  $c(-e)=-c(e)$ for all $e \in \Ecal(\Tcal)$ and the $\sigma$-Ptolemy equation
	for all tetrahedra of $\Tcal$.	
\end{definition}

As in Equation~\eqref{eqn.Passign},  a $\sigma$-Ptolemy assignment $c : \Ecal(\Tcal) \rightarrow \Cbb^\ast$ determines a cocycle $\rho : \Ecal(\overline{M}) \rightarrow \mathrm{SL}_2(\Cbb)$ and thus induces a representation $\rho: \pi_1(M)\rightarrow \mathrm{SL}_2(\Cbb)$ (also denoted by $\rho$) up to conjugation. 
We obtain a map $\Phi$ from the set $P^\sigma(\Tcal)$ of all $\sigma$-Ptolemy assignments of $\Tcal$ to the $\mathrm{SL}_2(\Cbb)$-character variety $\Xcal(M)$ of $M$
\begin{equation} \label{eqn.Phi}
	\Phi :P^\sigma(\Tcal)\rightarrow \Xcal(M)
\end{equation}
by sending $c \in P^\sigma(\Tcal)$ to the conjugacy class of $\rho$.
Note that by the definition~\eqref{eqn.Passign} the representation $\rho$ satisfies (up to conjugation)
\begin{equation} \label{eqn.Compatible}
	\rho(\gamma) = \begin{pmatrix} \sigma(\gamma) & \ast \\ 0 & \sigma(\gamma)^{-1}  \end{pmatrix} \quad \textrm{for all } \gamma \in \pi_1(\partial \overline{M})
\end{equation}   where $\sigma : \pi_1(\partial \overline{M})  \rightarrow \{ \pm1 \}$ is the homomorphism (also denoted by $\sigma$)  induced from the cocycle $\sigma$.

\begin{remark}	\label{rmk.scaling}
	As the $\sigma$-Ptolemy equation~\eqref{eqn.sgPtolemy} is homogeneous, the set $P^\sigma(\Tcal)$ admits the multiplication by a non-zero complex number acting diagonally.
	It is proved in \cite[Prop.2.6]{Yoon19} that $\Phi(k \cdot c) = \Phi(c)$ for all $ c\in P^\sigma(\Tcal)$ and $k \in \Cbb^\ast$.
\end{remark}

As in Equation~\eqref{eqn.Zshape}, a $\sigma$-Ptolemy assignment $c \in P^\sigma(\Tcal)$ determines the shape parameters of tetrahedra of $\Tcal$.
These shape parameters automatically form a solution of $\Tcal$, and its holonomy representation admits an $\mathrm{SL}_2(\Cbb)$-lift whose conjugacy class is $\Phi(c) \in \Xcal(M)$.
Conversely, a standard pseudo-developing map argument shows the following (see~\cite[Sec.2.3]{Yoon19} or \cite[Thm.1.12]{GTZ} for details).

\begin{proposition}[\cite{Yoon19}]\label{prop.exist} Let $z_1,\ldots, z_N$  be a solution of $\Tcal$ whose holonomy representation admits an $\mathrm{SL}_2(\Cbb)$-lift $\rho$.
	Then for any cocycle $\sigma : \Ecal(\partial \overline{M})  \rightarrow \{\pm1\}$  satisfying Equation~\eqref{eqn.Compatible} there exists $c \in P^\sigma(\Tcal)$ corresponding to the solution.
\end{proposition}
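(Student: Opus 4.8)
\textbf{Proof proposal for Proposition~\ref{prop.exist}.}

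The plan is to carry out a \emph{pseudo-developing map} construction, following the scheme in \cite[Sec.2.3]{Yoon19} and \cite[Thm.1.12]{GTZ}, adapted to the sign-extended ($\sigma$-deformed) setting. First I would lift the given solution $z_1,\dots,z_N$ of $\Tcal$ together with the representation $\rho:\pi_1(M)\to\mathrm{SL}_2(\Cbb)$ to data on the universal cover: choose a $\rho$-equivariant pseudo-developing map $D:\widetilde{\overline{M}}\to \mathbb{H}^3$ whose restriction to each lifted truncated tetrahedron sends the truncation triangles into horospheres, so that each ideal vertex of each tetrahedron acquires a well-defined \emph{decoration}, i.e.\ a choice of horosphere. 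Concretely, each ideal vertex $v$ of the universal-cover triangulation $\widetilde{\Tcal}$ is sent to a point of $\partial\mathbb{H}^3=\Cbb P^1$, which we may encode by a vector in $\Cbb^2$ up to sign; the choice of horosphere fixes this vector up to sign exactly, so we obtain an assignment $v\mapsto \psi(v)\in\Cbb^2$ well-defined up to sign, equivariant in the sense $\psi(g\cdot v)=\rho(g)\psi(v)$ for $g\in\pi_1(M)$.

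The second step is to convert the vertex decoration into an edge assignment by the usual determinant (``$\lambda$-length'') formula: for an oriented edge $e$ of $\widetilde{\Tcal}$ from $v_i$ to $v_j$, set $c(e):=\det\big(\psi(v_i)\mid\psi(v_j)\big)$. One checks immediately that $c(-e)=-c(e)$, that $c$ descends to $\Ecal(\Tcal)$ because $\det$ is $\mathrm{SL}_2(\Cbb)$-invariant and the sign ambiguities in $\psi$ cancel pairwise, and that $c$ takes values in $\Cbb^\ast$ because the three ideal vertices of any face are distinct (the shapes $z_j$ lie in $\Cbb\setminus\{0,1\}$, so no two ideal vertices of a tetrahedron collide). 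The key computation is that for each tetrahedron the four $\lambda$-lengths satisfy the $\sigma$-Ptolemy equation \eqref{eqn.sgPtolemy}: expanding $\det(\psi(v_0)\mid\psi(v_2))\det(\psi(v_1)\mid\psi(v_3))$ by the Plücker (three-term) relation in $\Cbb^2$ gives the classical Ptolemy identity, and the short-edge signs $\sigma(s_{ijk})$ are precisely the discrepancies arising from the sign ambiguities of $\psi$ along the triangular faces — this is where the cocycle condition on $\sigma$ and the compatibility \eqref{eqn.Compatible} between the boundary holonomy of $\rho$ and $\sigma$ are used. Finally I would verify that the representation $\Phi(c)$ recovered from $c$ via \eqref{eqn.Passign} is conjugate to $\rho$, and that the shape parameters \eqref{eqn.Zshape} reproduce the original $z_j$, by tracing through the construction of $\rho$ from $c$ in \cite[Sec.2]{Yoon19}.

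The main obstacle is the \emph{consistency of the sign assignment}: a priori the vertex decoration $\psi$ is only well-defined up to an independent sign at each ideal vertex of $\widetilde{\Tcal}$, and one must show that, given the prescribed boundary cocycle $\sigma$ with \eqref{eqn.Compatible}, these signs can be chosen coherently so that the resulting short-edge signs of $c$ agree with $\sigma$ on all of $\partial\overline{M}$ and so that $c$ is genuinely $\pi_1$-equivariant (not merely equivariant up to sign). This is a cohomological matching argument: the obstruction to a coherent choice lives in $H^1$ of the boundary with $\Zbb/2$ coefficients, and \eqref{eqn.Compatible} is exactly the hypothesis guaranteeing that the obstruction class of the naive construction equals the class of $\sigma$, so the two can be reconciled after adjusting $\psi$ by a $1$-cochain. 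Once this bookkeeping is in place, the rest is the routine Plücker-relation computation already indicated, and the existence of $c\in P^\sigma(\Tcal)$ corresponding to the given solution follows.
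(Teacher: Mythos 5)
The paper does not actually prove this proposition itself: it states it as a known result and defers to \cite[Sec.2.3]{Yoon19} and \cite[Thm.1.12]{GTZ} with the one-line explanation that it follows from ``a standard pseudo-developing map argument.'' Your proposal is a correct and faithful reconstruction of exactly that argument: build a $\rho$-equivariant decorated pseudo-developing map, read off $\lambda$-lengths as $2\times 2$ determinants, use the Pl\"ucker relation to obtain the Ptolemy identity, and finally resolve the sign ambiguity in the vertex decorations against the prescribed cocycle $\sigma$ using Equation~\eqref{eqn.Compatible}. You have also correctly located the real technical content in the sign-matching step, which is precisely the new ingredient of the $\sigma$-deformed setting compared with the boundary-unipotent case of \cite{GTZ}.

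Two small points of imprecision, neither fatal. First, ``the sign ambiguities in $\psi$ cancel pairwise'' is not a correct standalone justification that $c$ descends; descent to $\Ecal(\Tcal)$ uses $\mathrm{SL}_2$-invariance of $\det$ together with $\pi_1$-equivariance of $\psi$, while the independent-sign ambiguity at each ideal vertex is exactly the obstruction you later isolate, so it does not simply cancel. Second, the reconciliation you describe is achieved by adjusting $\psi$ by a $\Zbb/2$-valued $0$-cochain on ideal vertices (equivalently a $0$-cochain on $\partial\overline{M}$), which changes the induced short-edge cocycle by a coboundary; the object you vary is a $0$-cochain, not a $1$-cochain, and the hypothesis~\eqref{eqn.Compatible} guarantees the naive cocycle and $\sigma$ represent the same class in $H^1(\partial\overline{M};\Zbb/2)\cong\mathrm{Hom}(\pi_1(\partial\overline{M}),\Zbb/2)$. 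With those adjustments the argument is the one the paper is pointing to.
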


The geometric representation of a hyperbolic 3-manifold always has an $\mathrm{SL}_2(\Cbb)$-lift \cite[Prop.3.1.1]{CS83}.
We say that a cocycle $\sigma : \Ecal(\partial \overline{M}) \rightarrow \{\pm1\}$ is an \emph{obstruction coycle} if it satisfies Equation~\eqref{eqn.Compatible} for a lift $\rho$ of the geometric representation of $M$.
It follows from Proposition~\ref{prop.exist} that for any obstruction cocycle $\sigma$ there exists $c \in P^\sigma(\Tcal)$ corresponding to the geometric solution of $\Tcal$.

\subsection{An alternative definition of the twisted 1-loop invariant} \label{sec.twistPtolemy}

We now fix an epimorphism $\alpha : \pi_1(M) \twoheadrightarrow \Zbb$.
Let $\widetilde{M}$ be the infinite cyclic cover of $M$ corresponding to the kernel of $\alpha$ and $\widetilde{\Tcal}$ be the ideal triangulation of $\widetilde{M}$ induced from $\Tcal$.
We choose a fundamental domain of $M$ in $\widetilde{M}$ and denote by $\widetilde{e}_i$ and $\widetilde{\Delta}_j$ the lifts of $e_i$ and $\Delta_j$, respectively, to the fundamental domain (recall that $e_1,\ldots,e_N$ and $\Delta_1,\ldots,\Delta_N$ are the edges and the tetrahedra of $\Tcal$).
It is clear that  $\widetilde{e}_1,\ldots, \widetilde{e}_N$ are edges of $\widetilde{\Delta}_1,\ldots, \widetilde{\Delta}_N$ but those are not all of the edges of $\widetilde{\Delta}_1,\ldots, \widetilde{\Delta}_N$. We denote by $\widetilde{e}_{N+1}, \ldots, \widetilde{e}_{N+L}$ the other edges of $\widetilde{\Delta}_1,\ldots, \widetilde{\Delta}_N$.
It follows that for $N+1 \leq i \leq N+L$ there exists a unique pair of $1 \leq j(i) \leq N$ and  $k(i) \in \Zbb$ satisfying
\begin{equation} \label{eqn.UniqueLift}
	\widetilde{e}_{i} = g^{k(i)} \cdot \widetilde{e}_{j(i)}
\end{equation}
where $g$ is a generator of the deck transformation group $\Zbb$ of $\widetilde{M}$.

We choose an orientation of each edge of $\Tcal$ and orient the edges of $\widetilde{\Tcal}$ accordingly.
We then assign a variable $c_i$ to  each $\widetilde{e}_i$ $(1 \leq i \leq N+L$) and 
define an element $p^\sigma_i \in \Zbb[c_1 ,\ldots,c_{N+L}, t^{\pm1}]$ as follows.
\begin{itemize}
	\item let $p^\sigma_i$ be the left-hand side of the $\sigma$-Ptolemy equation~\eqref{eqn.sgPtolemy} of $\widetilde{\Delta}_i$ for $1 \leq i \leq N$;
	\item let $p^\sigma_{i}=  c_{i} - t^{-k(i)} c_{j(i)}$ for $N+1 \leq i \leq N+L$.
\end{itemize}
For $N+1 \leq i \leq N+L$ we often write $p^\sigma_{i}(t)$ instead of $p^\sigma_{i}$ if we need to specify the value of the variable $t$.
For instance, there is an obvious bijection
\[P^\sigma(\Tcal) \simeq \{ (c_1,\ldots,c_{N+L}) \in (\Cbb^\ast)^{N+L} : p^\sigma_1 = \cdots = p^\sigma_N = p^\sigma_{N+1}(1)= \cdots = p^\sigma_{N+L}(1)=0\}\]
sending $c \in P^\sigma(\Tcal)$ to $(c_1,\ldots,c_{N+L})$ where $c_i=c(e_i)$ for $1 \leq i \leq N$ and $c_i=c_{j(i)}$ for $N+1 \leq i \leq N+L$. In particular, if $\sigma$ is an obstruction cocycle, then there exists a point $(c_1,\ldots,c_{N+L}) \in (\Cbb^\ast)^{N+L}$ corresponding to the geometric solution of $\Tcal$.

\begin{theorem} \label{thm.twistPtolemy}
	For an obstruction cocycle $\sigma$	we have	\[\tau^{\CS}(\Tcal,t) = \pm \frac{1}{c_1\cdots c_N}\det \left( \frac{\partial p^\sigma_{i}}{\partial c_j} \right), \quad 1 \leq i,j \leq N+L\]
	where the right-hand side is evaluated at a point $(c_1,\ldots,c_{N+L})$ corresponding to the geometric solution of $\Tcal$.
\end{theorem}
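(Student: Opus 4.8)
The plan is to compare the Jacobian matrix $\left(\partial p^\sigma_i / \partial c_j\right)_{1\le i,j\le N+L}$ with the matrix $\mathbf{G}(t)\,\mathrm{diag}(\zeta)+\mathbf{G}'(t)\,\mathrm{diag}(\zeta')+\mathbf{G}''(t)\,\mathrm{diag}(\zeta'')$ appearing in the definition~\eqref{eqn.twist1loop} of $\tau^\CS(\Tcal,t)$, after eliminating the ``extra'' variables $c_{N+1},\dots,c_{N+L}$. The key structural observation is that the last $L$ rows of the Jacobian are essentially trivial: $\partial p^\sigma_i/\partial c_i = 1$ and $\partial p^\sigma_i/\partial c_{j(i)} = -t^{-k(i)}$ for $N+1\le i\le N+L$, with all other entries zero. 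So by Laplace/Schur-complement reasoning one can perform $L$ steps of Gaussian elimination using these rows to clear the columns $c_{N+1},\dots,c_{N+L}$, at no cost to the determinant (up to sign), reducing to the $N\times N$ determinant of the matrix with $(i,j)$-entry $\sum_{N<l\le N+L,\ j(l)=j}(-t^{-k(l)})\,\frac{\partial p^\sigma_i}{\partial c_l}\big|_{\text{sub}} + \frac{\partial p^\sigma_i}{\partial c_j}$ for $1\le i,j\le N$ --- that is, the Jacobian of the $\sigma$-Ptolemy equations of $\widetilde\Delta_1,\dots,\widetilde\Delta_N$ with respect to $c_1,\dots,c_N$, where each occurrence of $c_l$ ($l>N$) has been replaced by $t^{-k(l)}c_{j(l)}$ as dictated by the infinite cyclic cover.

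The heart of the argument is then a local (per-tetrahedron) computation identifying, for each $j$, the ``$c_j$-weighted column derivative'' of $p^\sigma_i$. First I would note that $p^\sigma_i$ ($1\le i\le N$) is the $\sigma$-Ptolemy polynomial of the lift $\widetilde\Delta_i$, a sum of three monomials in the six edge-variables of $\widetilde\Delta_i$; differentiating with respect to a particular edge variable and multiplying by that variable recovers (up to the $\pm$ sign and a common factor) exactly $\zeta^\square/\zeta^f_\Delta$ in the normalization of Lemma~\ref{lem.key}, or the plainer $\zeta^\square/\zeta$ of Lemma~\ref{lem.key}. The subtlety is that in $\widetilde\Tcal$ an edge of $\widetilde\Delta_i$ is either one of $\widetilde e_1,\dots,\widetilde e_N$ or one of the auxiliary edges $\widetilde e_{N+1},\dots,\widetilde e_{N+L}$, which after substitution becomes $t^{-k(l)}c_{j(l)}$; and the number of edges of $g^k\cdot\widetilde\Delta_j$ incident to $\widetilde e_i$ with parameter $z^\square_j$ is precisely the entry $(\mathbf G^\square_k)_{ij}$ defining the twisted gluing matrices. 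So collecting the contributions of all edges of $\widetilde\Delta_i$ that map to $\widetilde e_j$ (possibly translated by powers of $g$), weighted by the corresponding powers of $t$, one gets exactly $\big(\mathbf G(t)\,\mathrm{diag}(\zeta)+\mathbf G'(t)\,\mathrm{diag}(\zeta')+\mathbf G''(t)\,\mathrm{diag}(\zeta'')\big)_{ij}$ divided by $c_j$ --- equivalently, by Remark~\ref{rmk.partial}, one recognizes $c_j^{-1}\,\partial E_i/\partial(\log)$ data. This is where Lemma~\ref{lem.key} (or Lemma~\ref{lem.key2} with a flattening $f^\square$) is invoked tetrahedron by tetrahedron.

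Carrying this out, the reduced $N\times N$ Jacobian equals, up to an overall sign and an overall scalar of the form $\prod_j c_j^{-1}$ times a product of shape-parameter factors $\zeta_j^{-1}$ (or, if one routes through Lemma~\ref{lem.key2}, a product $\zeta^{f_j}_j\zeta'^{f'_j}_j\zeta''^{f''_j}_j$), the determinant in~\eqref{eqn.twist1loop}. The flattening conditions~\eqref{eqn.f1}--\eqref{eqn.f3} are exactly what is needed to guarantee that the per-tetrahedron sign ambiguities and the $\zeta^f_\Delta$ normalizations glue consistently into the single global factor $\prod_j\zeta_j^{f_j}\zeta'^{f'_j}_j\zeta''^{f''_j}_j$ in the denominator of $\tau^\CS(\Tcal,t)$; since this factor cancels (up to sign) between the two expressions, and the scaling-invariance of Remark~\ref{rmk.scaling} together with homogeneity of the $\sigma$-Ptolemy equations ensures the $\prod c_j$ prefactors match, we obtain the claimed identity. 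The main obstacle I expect is bookkeeping the signs: the $\pm$ in Lemmas~\ref{lem.key} and~\ref{lem.key2} depends on the chosen edge orientations and on $\sigma$, and one must check these local signs assemble into a single global sign independent of all choices (orientations of edges of $\Tcal$, the fundamental domain, the flattening) --- precisely the well-definedness statement quoted after the definition of $\tau^\CS(\Tcal,t)$. I would handle this by fixing orientations compatibly with Figure~\ref{fig.truncation} on each $\widetilde\Delta_i$, tracking the contribution of each of the $L$ elimination steps to the sign, and appealing to the known sign-ambiguity of $\tau^\CS(\Tcal,t)$ itself so that only the statement ``equal up to $\pm$'' needs to survive.
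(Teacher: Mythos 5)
Your proposal follows essentially the same route as the paper's proof: column operations (equivalently, a Schur complement) exploiting the trivial last $L$ rows reduce the $(N+L)\times(N+L)$ Jacobian to an $N\times N$ matrix, whose entries are then identified with those of $\mathbf{G}(t)\,\mathrm{diag}(\zeta)+\mathbf{G}'(t)\,\mathrm{diag}(\zeta')+\mathbf{G}''(t)\,\mathrm{diag}(\zeta'')$ tetrahedron by tetrahedron via Lemma~\ref{lem.key2}, with the flattening condition~\eqref{eqn.f2} yielding $c^{f_1}_{\widetilde{\Delta}_1}\cdots c^{f_N}_{\widetilde{\Delta}_N}=c_1^2\cdots c_N^2$ and hence the prefactor $1/(c_1\cdots c_N)$. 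One small slip: the coefficient in your displayed reduced $(i,j)$-entry should be $+t^{-k(l)}$ rather than $-t^{-k(l)}$, as is consistent with your own (correct) description of substituting $c_l=t^{-k(l)}c_{j(l)}$ and applying the chain rule.
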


\begin{proof}
	For $N+1 \leq i \leq N+L$ we have
	\begin{equation}
		\frac{\partial p^\sigma_i}{\partial c_j} = 
		\left \{
		\begin{array}{ll}
			1 & \textrm {if } j=i \\
			-t^{-k(i)} & \textrm{if } j=j(i)\\
			0 & \textrm{otherwise}
		\end{array}.
		\right.
	\end{equation}
	Thus adding $t^{-k(i)}$ times the $i$-column of the jacobian matrix $(\partial p^\sigma_i/\partial c_j)$ to the $j(i)$-th column, we have 
	\begin{equation}  \nonumber
		\begin{pmatrix}
			\mathbf{Y} & \rvline &  * \\
			\hline
			\bigzero & \rvline & \mathrm{Id}_L 
		\end{pmatrix}
	\end{equation}
	where $\mathrm{Id}_L$ is the identity matrix of size $L$ and $\mathbf{Y}$ is a square matrix of size $N$ with $\det \mathbf{Y} = \det (\frac{\partial p_i^\sigma}{\partial c_j})$.

	Let $f^\square =(f^\square_1,\ldots, f^\square_N)\in \Zbb^N$ be a combinatorial flattening  of $\Tcal$. We claim that 
	\begin{equation} \label{eqn.keyclaim}
		\begin{array}{ll}
			&\mathrm{diag} (\pm c_1,\ldots, \pm c_N) \, \mathbf{Y}^T\, \mathrm{diag} (
			c^{f_1}_{\widetilde{\Delta}_1},\ldots, c^{f_N}_{\widetilde{\Delta}_N})^{-1} \\[3pt]
			& = \big(\mathbf{G}(t)\, \mathrm{diag}(\zeta)+\mathbf{G}'(t) \,
			\mathrm{diag}(\zeta')+\mathbf{G}''(t) \,
			\mathrm{diag}(\zeta'')	\big) \, \mathrm{diag} (\zeta^{f_1}_{\widetilde{\Delta}_1}, \ldots,\zeta^{f_N}_{\widetilde{\Delta}_N})^{-1}
		\end{array}
	\end{equation}
	where $c^{f_i}_{\widetilde{\Delta}_i}$ and $\zeta^{f_i}_{\widetilde{\Delta}_i}$ are the notations defined in Lemma~\ref{lem.key2}.
	We fix $1 \leq I \leq N$ and let $N+1 \leq I_1,\ldots,I_m \leq N+L$ be the indices satisfying $j(I_1)=\cdots =j(I_m)=I$. 
	This means that the edges $\widetilde{e}_{I_1},\ldots,\widetilde{e}_{I_m}$  are lifts of the edge $e_I$.
	Then 	we have
	\[\mathbf{Y}_{JI} = \frac{\partial p^\sigma_J }{\partial c_{I}} +\frac{\partial p^\sigma_J }{\partial c_{I_1}} t^{-k(I_1)} + \cdots +\frac{\partial p^\sigma_J}{\partial c_{I_m}} t^{-k(I_m)}\] for $1 \leq J \leq N$,  and 	since $c_I = c_{I_1} = \cdots = c_{I_m}$,
	\begin{equation} \label{eqn.lem}
		\frac{c_{I}}{ c^{f_J}_{\widetilde{\Delta}_J}} \mathbf{Y}_{JI} = \frac{c_{I}}{ c^{f_J}_{\widetilde{\Delta}_J}} \frac{\partial p^\sigma_J }{\partial c_{I}} + \frac{c_{I_1}}{ c^{f_J}_{\widetilde{\Delta}_J}}\frac{\partial p^\sigma_J}{\partial c_{I_1}} t^{-k(I_1)} + \cdots  + \frac{c_{I_m}}{ c^{f_J}_{\widetilde{\Delta}_J}}\frac{\partial p^\sigma_J}{\partial c_{I_m}} t^{-k(I_m)}.
	\end{equation}
	A tetrahedron $\Delta_J$ is attached to $e_I$ if and only if $\widetilde{\Delta}_J$ is attached to one of $\widetilde{e}_I, \widetilde{e}_{I_1} ,\ldots,\widetilde{e}_{l_m}$.
	Also, a tetrahedron $\widetilde{\Delta}_J$ is attached to $\widetilde{e}_{I_l}$ for some $1 \leq l \leq m$ if and only if $g^{-k(I_l)} \cdot \widetilde{\Delta}_J$ is attached to $\widetilde{e}_{I}$.
	Combining this fact with Equation~\eqref{eqn.lem}, we obtain Equation~\eqref{eqn.keyclaim} from Lemma~\ref{lem.key2}.
	Therefore, 
	\begin{equation}
		\pm  \frac{c_1\cdots c_N}{c^{f_1}_{\widetilde{\Delta}_1}\cdots c^{f_N}_{\widetilde{\Delta}_N}} \det \mathbf{Y} = \frac{ \det \big(\mathbf{G}(t)\, \mathrm{diag}(\zeta)+\mathbf{G}'(t) \,
			\mathrm{diag}(\zeta')+\mathbf{G}''(t) \,
			\mathrm{diag}(\zeta'')	\big)}{ \zeta^{f_1}_{\widetilde{\Delta}_1} \cdots \zeta^{f_N}_{\widetilde{\Delta}_N}} = \tau^\CS(\Tcal,t).
	\end{equation}
	On the other hand, we have \[{c^{f_1}_{\widetilde{\Delta}_1}}\cdots {c^{f_N}_{\widetilde{\Delta}_N}}=c_1^2 \cdots c_N^2\]
	since $(f,f',f'')$ satisfies $\mathbf{G} f+\mathbf{G}' f' + \mathbf{G}'' f''  =(2,\ldots,2)^T$.
	This completes the proof:
	\begin{equation}
		\tau^\CS(\Tcal,t) = \pm  \frac{c_1\cdots c_N}{{c^{f_1}_{\widetilde{\Delta}_1}}\cdots {c^{f_N}_{\widetilde{\Delta}_N}}} \det \mathbf{Y} = \pm \frac{1}{c_1 \cdots c_N} \det \mathbf{Y} = \pm \frac{1}{c_1 \cdots c_N} \det \left( \frac{\partial p^\sigma_{i}}{\partial c_j} \right).
	\end{equation}
\end{proof}

	 Combining Theorem~\ref{thm.twistPtolemy} with \cite[Thm.1.7]{GY21}, we also obtain an alternative expression of the 1-loop invariant \cite{DG1} in terms of $\sigma$-Ptolemy coordinates:
	\begin{corollary} \label{cor.1loop}
		We have
			\[\tau_\lambda^{\CS}(\Tcal) = \pm \frac{1}{c_1\cdots c_N} \left. \dfrac{d}{dt} \right|_{t=1} \det \left( \frac{\partial p^\sigma_{i}}{\partial c_j} \right), \quad 1 \leq i,j \leq N+L\]	
		where $\lambda$ is the peripheral curve satisfying $\alpha(\lambda)=0$.
	\end{corollary}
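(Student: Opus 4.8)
The plan is to deduce Corollary~\ref{cor.1loop} directly from Theorem~\ref{thm.twistPtolemy} together with \cite[Thm.1.7]{GY21}, which (I expect) asserts that the ordinary 1-loop invariant $\tau^\CS_\lambda(\Tcal)$ for the peripheral curve $\lambda$ with $\alpha(\lambda)=0$ is recovered from the twisted 1-loop invariant as $\tau^\CS_\lambda(\Tcal) = \pm \frac{d}{dt}\big|_{t=1}\tau^\CS(\Tcal,t)$. Granting this, the argument is essentially a one-line substitution: Theorem~\ref{thm.twistPtolemy} gives $\tau^\CS(\Tcal,t) = \pm \frac{1}{c_1\cdots c_N}\det\!\big(\partial p^\sigma_i/\partial c_j\big)$ as an identity of Laurent polynomials in $t$ (with the $c_i$ fixed at the geometric solution, hence independent of $t$), so differentiating both sides in $t$ and evaluating at $t=1$ yields the claimed formula, since the prefactor $\frac{1}{c_1\cdots c_N}$ is a constant and pulls out of the derivative.

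First I would recall precisely what \cite[Thm.1.7]{GY21} says and check the normalization: the twisted 1-loop invariant is only well-defined up to multiplication by a unit $\pm t^k$ in $\Zbb[t^{\pm1}]$, whereas the relation to $\tau^\CS_\lambda$ presumably holds for a specific representative (e.g.\ the one normalized so that $\tau^\CS(\Tcal,1)=\tau^\CS_\lambda(M)$-type evaluation, or after a suitable symmetrization). I would note that on the right-hand side of Theorem~\ref{thm.twistPtolemy} the $t$-dependence enters only through the blocks $p^\sigma_i = c_i - t^{-k(i)}c_{j(i)}$ for $N+1\le i\le N+L$, so $\det(\partial p^\sigma_i/\partial c_j)$ is a concrete Laurent polynomial in $t$ and its derivative at $t=1$ is unambiguous once the $c_i$ are chosen. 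The point to be careful about is that ``$\pm$'' in Theorem~\ref{thm.twistPtolemy} hides a sign independent of $t$, so differentiation does not interfere with it; likewise the ambiguity ``up to a unit'' on the 1-loop side is exactly the ambiguity inherent in $\tau^\CS(\Tcal,t)$ and is absorbed into the $\pm$ in the statement of the corollary.

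Concretely, the steps are: (i) state $\tau^\CS(\Tcal,t) = \pm\frac{1}{c_1\cdots c_N}\det(\partial p^\sigma_i/\partial c_j)$ from Theorem~\ref{thm.twistPtolemy}, viewing both sides as elements of $\Cbb[t^{\pm1}]$ with the $\sigma$-Ptolemy coordinates $c_1,\dots,c_{N+L}$ frozen at the geometric solution; (ii) quote \cite[Thm.1.7]{GY21} to write $\tau^\CS_\lambda(\Tcal) = \pm\frac{d}{dt}\big|_{t=1}\tau^\CS(\Tcal,t)$; (iii) substitute (i) into (ii) and pull the constant $\frac{1}{c_1\cdots c_N}$ through the $t$-derivative; (iv) observe that $L\ge 1$ (equivalently $\alpha$ is surjective, so $\widetilde{\Tcal}\neq\Tcal$) guarantees the derivative is not identically zero in the relevant cases, and that the two $\pm$'s combine into the single sign ambiguity asserted in the corollary.

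I do not anticipate a serious obstacle: the only genuine content is the already-cited \cite[Thm.1.7]{GY21} relating the two invariants, and the rest is a trivial manipulation. The one thing worth spelling out, rather than glossing over, is \emph{why} the $c_i$ may be treated as constants when differentiating in $t$ — namely, that Theorem~\ref{thm.twistPtolemy} evaluates the Jacobian at \emph{a single fixed point} corresponding to the geometric solution of $\Tcal$ (which is a solution of the untwisted system, independent of $t$), so that all $t$-dependence is explicit and confined to the last $L$ rows $p^\sigma_i(t) = c_i - t^{-k(i)}c_{j(i)}$. With that remark in place the corollary follows immediately, so in the paper this can reasonably be left as a short paragraph rather than a displayed multi-line computation.
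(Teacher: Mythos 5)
Your proposal is correct and is exactly the paper's argument: the paper derives the corollary in one line by combining Theorem~\ref{thm.twistPtolemy} with \cite[Thm.1.7]{GY21} (which indeed states that $\tau^\CS_\lambda(\Tcal)$ is recovered as $\pm\frac{d}{dt}\big|_{t=1}\tau^\CS(\Tcal,t)$ for the curve $\lambda$ with $\alpha(\lambda)=0$). Your additional remarks on why the $c_i$ are constant in $t$ and how the sign ambiguities combine are sound but are left implicit in the paper.
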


\begin{remark} \label{rmk.thirdcondition}
	A combinatorial flattening is required to satisfy the three conditions \eqref{eqn.f1}, \eqref{eqn.f2} and \eqref{eqn.f3}.
	However, we only used the first and second conditions to obtain Theorem~\ref{thm.twistPtolemy} which provides an  expression of the twisted 1-loop invariant without using a flattening. 
	This implies that we actually do not need the third condition~\eqref{eqn.f3} in the first place.
\end{remark}

	\section{Once-punctured torus bundles}	 \label{sec.POTB}

Let $F$ be the once-punctured torus and  $G$ be the mapping class group of  $F$.
For $\varphi \in G$ we denote by $M_\varphi$ the $F$-bundle over the circle $S^1$  whose monodromy is $\varphi$, i.e.,
\[ M_\varphi  = F \times [0,1] /_\sim\]
where the quotient $\sim$ identifies $F \times \{0\}$ with $F \times \{1\}$ by	 a diffeomorphism of $F$ whose isotopy class is $\varphi$. 
It is well-known that $G$ is identified with $\mathrm{SL}_2(\Zbb)$ and that $M_\varphi$ is hyperbolic if $\varphi \in \mathrm{SL}_2(\Zbb)$ has distinct real eigenvalues. 
Also, up to conjugation we can write 
\[ \varphi = \underbrace{R\cdots R}_{s_1}\underbrace{L\cdots L}_{t_1} \cdots \underbrace{R\cdots R}_{s_n}\underbrace{L\cdots L}_{t_n}\]
for some positive integers $s_1,t_1,\ldots, s_n,t_n$ where 
\[R= \begin{pmatrix} 1 & 1 \\ 0 & 1 \end{pmatrix} \textrm{ and }
L=\begin{pmatrix} 1 & 0  \\ 1 & 1 \end{pmatrix}.\]
We denote by $\varphi_i \in \{R,L\}$ the $i$-th letter of $\varphi$ for $1 \leq i \leq N=s_1+t_1+\cdots+s_n+t_n$, and let $\varphi_0 := \varphi_N$.
For the sake of simple exposition, we assume that $t_n \geq 2$. The other case  $t_n=1$ has only minor variants.

\subsection{The canonical triangulation of $M_\varphi$} \label{sec.canonical}
The canonical triangulation $\Tcal_\varphi$ of $M_\varphi$ consists of $N$ ideal tetrahedra.
Letting $M_{0}$ be the once-punctured torus, we inductively construct a manifold $M_i$ $(1\leq i \leq N$) by attaching an ideal tetrahedron $\Delta_i$ to $M_{i-1}$ where the way of attaching $\Delta_i$ depends on whether  $\varphi_{i-1}$ is $R$ or $L$ as in Figure~\ref{fig.LRaction}. 
Then the manifold $M_\varphi$ is obtained from $M_N$ by identifying the remaining faces of the last tetrahedron $\Delta_{N}$ with the initial surface $M_{0}$ in a trivial way.
We refer to \cite{Gue06} for details. 
We denote by $e_i$ the new edge (the bold edge in Figure~\ref{fig.LRaction}) created when we attach $\Delta_i$ to $M_{i-1}$ and by $z_i$ the shape parameter of $\Delta_i$ at $e_i$. 	

\begin{figure}[htpb!]
	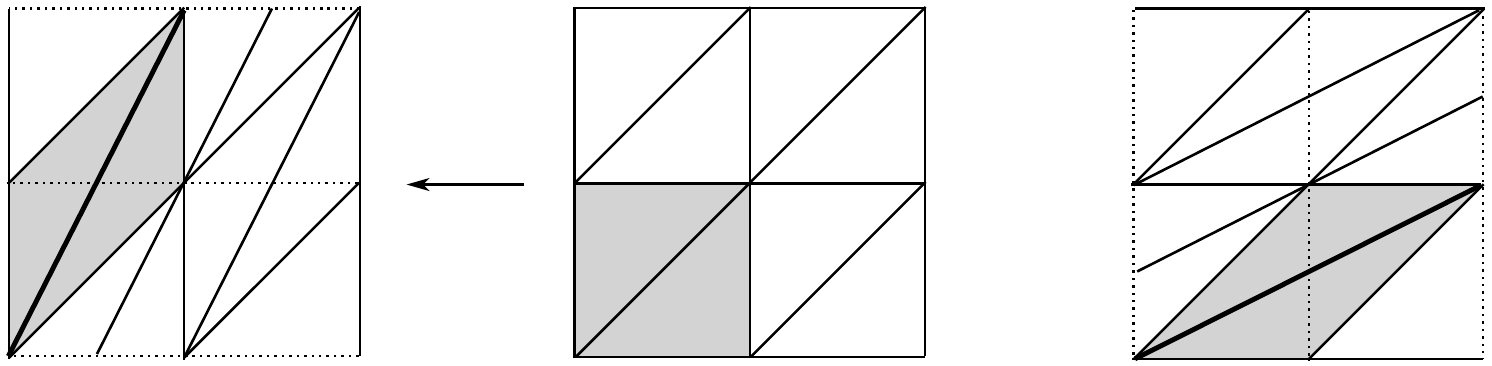
	\caption{Four copies of $F$ and the way of attaching $\Delta_i$.}
	\label{fig.LRaction}
\end{figure}	

The canonical triangulation $\Tcal_\varphi$ induces the triangulation of the peripheral torus of $M_\varphi$ where each  $\Delta_i$ provides four triangles (see Figure \ref{fig.PeripheralTorus}).
We refer to \cite[Sec.4]{Gue06} for a detailed description of this triangulation.
Each edge $e_i$ appears in this triangulation as a vertex and thus 
the gluing equation of $e_i$ can be easily computed from the triangles adjacent to the vertex:
\begin{equation} \label{eqn.GE}
	e_i : \quad
	\left\{
	\begin{array}{ll}
		z_{\overline{i}} \, (z'_{\overline{i+1}})^2 \cdots (z'_{\overline{i+j}})^2 \, z_{\overline{i+j+1}}=1 & \textrm{if } \varphi_{i}=R\\[5pt]
		z_{\overline{i}} \, (z''_{\overline{i+1}})^2 \cdots (z''_{\overline{i+j}})^2 \, z_{\overline{i+j+1}}=1& \textrm{if } \varphi_{i}=L
	\end{array} 
	\right.
\end{equation}
for all $1 \leq i \leq  N$
where $1 \leq \overline{k} \leq N$ denotes the number  congruent to $k \in \Nbb$ in modulo $N$ and $j \geq 1$ is the smallest integer satisfying $\varphi_{\overline{i+j}}= \varphi_{\overline{i}}$.
Remark that we have $i+j+1 \leq N$ so that we actually do not need the overline symbols in Equation \eqref{eqn.GE} for all $1 \leq i \leq N$  but  $i=N-t_n$ (when $\varphi_{i}$ is the last $R$ in $\varphi$) and $i= N-1,N$ (when $\varphi_{i}$ is one of the last two $L$'s in $\varphi$). 
Without using the overline symbol, we have
\begin{equation} \label{eqn.GE1}
	e_i : \quad
	\left\{
	\begin{array}{ll}
		z_{i} \, (z'_{i+1})^2 \cdots (z'_{i+j})^2 \, z_{i+j+1}=1 & \textrm{if } \varphi_{i}=R\\[5pt]
		z_{i} \, (z''_{i+1})^2 \cdots (z''_{i+j})^2 \, z_{i+j+1}=1& \textrm{if } \varphi_{i}=L
	\end{array} 
	\right.
\end{equation}
for all $1 \leq i \leq N$ but $i = N-t_n,  N-1,  N$ where the rest cases are
\begin{equation} \label{eqn.GE2}
	\begin{array}{rl}
		e_{N-t_n}: &\quad	z_{N-t_n} \, (z'_{N-t_n+1})^2 \cdots (z'_{N})^2 \times  (z'_{1})^2 \, z_2=1,\\[5pt]		
		e_{N-1}:&\quad 	z_{N-1} \, (z''_{N})^2 \times z_1=1,\\[5pt]		
		e_{N}:&\quad 	z_{N} \times (z''_{1})^2 (z''_{2})^2\cdots (z''_{t_1+1})^2 \, z_{t_1+2}=1 .\\[5pt]		
	\end{array}
\end{equation}
Here we put $\times$ intentionally to distinguish the indices that come after $N$. This would be helpful for understanding Lemma~\ref{lem.twist} below.
\begin{figure}[htpb!]
	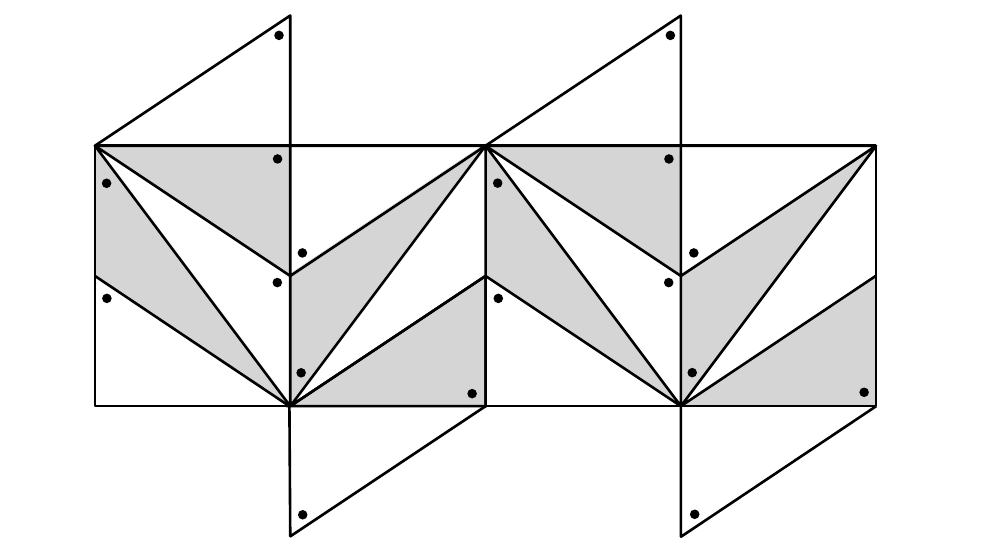
	\caption{The induced triangulation of the peripheral torus for $\varphi = R^2L^3$. The triangles with index $1 \leq i \leq 5$ are from $\Delta_i$, and the dots indicate where the parameter $z_i$ is assigned.}
	\label{fig.PeripheralTorus}
\end{figure}

\begin{example} \label{ex.R2L3}
	Let $\varphi = R^2L^3$.	The canonical triangulation $\Tcal_\varphi$  has 5 edges whose gluing equations are
	\begin{align*}
		e_1: \quad &z_1 (z_2')^2 z_3 =1, \\
		e_2: \quad&z_2 (z_3')^2 (z_4')^2 (z_5')^2 \times (z_1')^2 z_2 =1,\\
		e_3: \quad& z_3 (z''_4)^2 z_5 =1, \\
		e_4: \quad& z_4 (z''_5)^2 \times z_1 =1,\\
		e_5: \quad& z_5 \times (z''_1)^2(z''_2)^2 (z''_3)^2 z_4 =1.
	\end{align*}
	If we choose peripheral curves $\mu$ and $\lambda$ as in Figure \ref{fig.PeripheralTorus}, then their completeness equations are 
	\begin{align}
		\mu: \quad & (z_1'')^{-1} (z_2'')^{-1} z_3'z_4'z_5'  =1, \label{eqn.mu}\\
		\lambda: \quad& \left(z_1 (z_2'')^{-1} (z_3'')^{-1} z_4^{-1} (z_3'')^{-1}z_2'\right)^2 =1. \label{eqn.lambda}
	\end{align}
	Note that $\mu$ and $\lambda$ correspond to the base circle of the bundle $M_\varphi \rightarrow S^1$ and  the puncture of  $F$, respectively and that they generate the fundamental group of the peripheral torus.
	A numerical computation shows that 
	the geometric solution  of $\Tcal_\varphi$ is approximately
	\begin{equation} \label{eqn.sol}
		\begin{array}{ll}
			z_1 \approx -0.19373 + 0.90574 i , &  z_2 \approx 0.80627 + 0.90574  i, \\[3pt]
			z_3 \approx -0.19373 + 0.90574 i, & z_4 \approx 0.35508 + 0.35232 i,\\[3pt]
			z_5 \approx 0.35508 + 0.35232 i &
		\end{array}
	\end{equation}
	with $\mathrm{Vol}(M_\varphi) = \mathrm{Vol}(\Delta_1)+ \cdots+ \mathrm{Vol}(\Delta_5)  \approx4.17775$.
\end{example}

\subsection{The twisted 1-loop invariant of $\Tcal_\varphi$} \label{sec.twistOPT}

Let $\alpha : \pi_1(M_\varphi) \rightarrow \Zbb$  be the epimorphism  induced from the projection map $M_\varphi \rightarrow S^1$ and 
$\widetilde{M}_\varphi$ be the infinite cyclic cover corresponding to the kernel of $\alpha$.
We identify $\widetilde{M}_\varphi$ with  $F \times \Rbb$ and choose a fundamental domain of $M_\varphi$  as $F\times (0,1]$. Recall Section~\ref{sec.twist} that the choice of the fundamental domain determines twisted gluing equation matrices $\mathbf{G}^\square(t)$  of $\Tcal_\varphi$.

\begin{lemma}\label{lem.twist}
	The $i$-th row of $		\mathbf{G}(t)\, \mathrm{diag}(\zeta)+\mathbf{G}'(t) \,
	\mathrm{diag}(\zeta')+\mathbf{G}''(t) \,
	\mathrm{diag}(\zeta'')$ is
	\begin{equation} \label{eqn.row1}
		\left\{
		\begin{array}{ll}
			\begin{pmatrix} 
				0 & \cdots & 0 & \zeta_i & 2\zeta_{i+1}' & \cdots & 2\zeta_{i+j}' & \zeta_{i+j+1} & 0 & \cdots & 0
			\end{pmatrix} & \textrm{if } \varphi_i=R\\[5pt]
			\begin{pmatrix} 
				0 & \cdots & 0 & \zeta_i & 2\zeta_{i+1}'' & \cdots & 2\zeta_{i+j}'' & \zeta_{i+j+1} & 0 & \cdots & 0 
			\end{pmatrix} & \textrm{if } \varphi_i=L\\
		\end{array}
		\right.
	\end{equation} for all $1 \leq i \leq N$ but $i=N-t_n,  N-1,  N$	where $j \geq 1$ is the smallest integer satisfying $\varphi_{i+j}=\varphi_{i}$. 
	The rest three  rows are 
	\begin{equation} \label{eqn.row2}
		\begin{array}{rl}
			\textrm{the $(N-t_n)$-th row :} & 	\begin{pmatrix} 
				0 & \cdots & 0 & \zeta_{N-t_n} & 2\zeta_{N-t_n+1}' & \cdots & 2\zeta_{N}'
			\end{pmatrix} + 
			\begin{pmatrix}
				2 \zeta'_1 & \zeta_2  & 0 & \cdots & 0
			\end{pmatrix} t	 , \\[5pt]
			\textrm{the $(N-1)$-st row :} & 	\begin{pmatrix} 
				0 & \cdots & 0 & \zeta_{N-1} &  2{\zeta_{N}''}
			\end{pmatrix} + 
			\begin{pmatrix}
				\zeta_1  & 0 & \cdots & 0
			\end{pmatrix} t,	\\[5pt]
			\textrm{the $N$-th row :} & 	\begin{pmatrix} 
				0 & \cdots & 0 & \zeta_{N}
			\end{pmatrix} +
			\begin{pmatrix}
				2 \zeta''_1  & \cdots & 2 \zeta''_{t_1+1} &  \zeta_{t_1+2} & 0 & \cdots & 0
			\end{pmatrix} t.
		\end{array}
	\end{equation}	
\end{lemma}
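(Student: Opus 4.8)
The plan is to unwind the definitions in Remark~\ref{rmk.partial}, which identifies the $(i,j)$-entry of the matrix $\mathbf{G}(t)\,\mathrm{diag}(\zeta)+\mathbf{G}'(t)\,\mathrm{diag}(\zeta')+\mathbf{G}''(t)\,\mathrm{diag}(\zeta'')$ with $\partial E_i/\partial z_j$, where $E_i$ is the Laurent polynomial encoding how the logarithmic gluing equation of $e_i$ is distributed among the translates $g^k\cdot D$ of the fundamental domain $D = F\times(0,1]$. So the real content is: read off, for each edge $e_i$, which tetrahedra of $\widetilde{\Tcal}_\varphi$ are incident to the lift $\widetilde{e}_i$, and in which fundamental-domain translate each of them sits. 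First I would recall from Section~\ref{sec.canonical} (and Figure~\ref{fig.PeripheralTorus}) the local picture of the induced triangulation of the peripheral torus: each $\Delta_i$ contributes four triangles, the edge $e_i$ shows up as a vertex, and the gluing equation~\eqref{eqn.GE1} lists precisely the tetrahedra around that vertex together with which shape parameter ($z$, $z'$, or $z''$) occurs and with what multiplicity. Translating~\eqref{eqn.GE1} into the matrix entry $\partial E_i/\partial z_j$ using the definitions~\eqref{eqn.zeta} of $\zeta^\square$ immediately produces the row~\eqref{eqn.row1} for the generic edges $e_i$ with $i\neq N-t_n,N-1,N$: the term $z_i$ gives $\zeta_i$ in column $i$, each squared $z'_{i+k}$ (resp. $z''_{i+k}$) gives $2\zeta'_{i+k}$ (resp. $2\zeta''_{i+k}$), and the final $z_{i+j+1}$ gives $\zeta_{i+j+1}$ in column $i+j+1$; since for these $i$ all indices $i,\dots,i+j+1$ lie in $\{1,\dots,N\}$ without wraparound, the whole row sits in the fundamental domain, so it contributes with $t^0=1$ and there is no $t$-dependence.

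The heart of the proof is the three special rows~\eqref{eqn.row2}, which is also where I expect the main obstacle. These correspond exactly to the edges $e_{N-t_n},e_{N-1},e_N$ whose gluing equations~\eqref{eqn.GE2} involve indices that "wrap around" past $N$ — that is, edges for which the star of $\widetilde{e}_i$ in $\widetilde{\Tcal}_\varphi$ straddles the boundary between $F\times(0,1]$ and the next translate $g\cdot(F\times(0,1]) = F\times(1,2]$. The deck transformation $g$ is the monodromy, which acts as the word $\varphi = R^{s_1}L^{t_1}\cdots R^{s_n}L^{t_n}$; concretely, gluing the top face $F\times\{1\}$ of the last tetrahedron $\Delta_N$ back to $F\times\{0\}$ by $\varphi$ means that the tetrahedra labelled "$\times z_1, z_2,\dots$" in~\eqref{eqn.GE2} are really the lifts $g\cdot\widetilde{\Delta}_1, g\cdot\widetilde{\Delta}_2,\dots$ rather than $\widetilde{\Delta}_1,\widetilde{\Delta}_2,\dots$. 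So the terms appearing before the $\times$ in~\eqref{eqn.GE2} contribute to $\mathbf{G}_0$ (the $t^0$ part) while the terms after the $\times$ contribute to $\mathbf{G}_1$ (the $t^1$ part), which is precisely the "$+(\cdots)t$" structure displayed in~\eqref{eqn.row2}. I would make this bookkeeping rigorous by tracking, in Gu\'eritaud's description of $\Tcal_\varphi$ as a stack of tetrahedra in $F\times\Rbb$, exactly which tetrahedron lies in which "layer" $F\times(k,k+1]$; the $\times$ symbol was deliberately inserted in~\eqref{eqn.GE2} to mark this layer transition, and the assumption $t_n\geq 2$ guarantees that only these three edges are affected (with $t_n=1$ handled as a minor variant, as stated).

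After the layer analysis, the remaining work is purely mechanical: for the $(N-t_n)$-th edge, $\varphi_{N-t_n}=R$ (it is the last $R$), so the "pre-$\times$" part $z_{N-t_n}(z'_{N-t_n+1})^2\cdots(z'_N)^2$ gives the vector $(0,\dots,0,\zeta_{N-t_n},2\zeta'_{N-t_n+1},\dots,2\zeta'_N)$ with $t^0$, and the "post-$\times$" part $(z'_1)^2 z_2$ gives $(2\zeta'_1,\zeta_2,0,\dots,0)$ with $t^1$; for $e_{N-1}$, $\varphi_{N-1}=L$, so the pre-$\times$ part $z_{N-1}(z''_N)^2$ gives $(\dots,\zeta_{N-1},2\zeta''_N)$ and the post-$\times$ part $z_1$ gives $(\zeta_1,0,\dots,0)t$; and for $e_N$, $\varphi_N=L$, the pre-$\times$ part is just $z_N$ giving $(\dots,\zeta_N)$, while the post-$\times$ part $(z''_1)^2\cdots(z''_{t_1+1})^2 z_{t_1+2}$ gives $(2\zeta''_1,\dots,2\zeta''_{t_1+1},\zeta_{t_1+2},0,\dots,0)t$. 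Each of these matches~\eqref{eqn.row2} on the nose once one substitutes the definitions~\eqref{eqn.zeta} into $\partial E_i/\partial z_j$, so the proof concludes by assembling these row computations. I do not anticipate any genuine difficulty beyond the layer-counting argument; the rest is a faithful transcription of~\eqref{eqn.GE1}--\eqref{eqn.GE2} through Remark~\ref{rmk.partial}.
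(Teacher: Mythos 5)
Your proposal is correct and follows essentially the same route as the paper: both proofs read off the incident tetrahedra from the gluing equations \eqref{eqn.GE1}--\eqref{eqn.GE2}, use Remark~\ref{rmk.partial} to convert these into rows of the twisted matrix, and observe that for the generic edges every incident tetrahedron lifts into the fundamental domain (giving $t^0$) while for $e_{N-t_n}, e_{N-1}, e_N$ the terms after the $\times$ lift into $g\cdot D$ (giving the $t^1$ contributions). The paper's proof is terser but contains no idea absent from yours.
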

\begin{proof} Recall that	$\widetilde{e}_i$ and $\widetilde{\Delta}_j$ denotes the lifts of $e_i$ and $\Delta_j$ to the fundamental domain, respectively and that  the tetrahedra $\Delta_{\overline{i}},\ldots,\Delta_{\overline{i+j+1}}$ are attached to the edge $e_i$ in $\Tcal_\varphi$ (see Equation~\eqref{eqn.GE}).
	Lifting $e_i$ to $\widetilde{e}_i$, these tetrahedra $\Delta_{\overline{k}}$ ($i \leq k  \leq i+j+1$) are lifted to $g^\frac{k-\overline{k}}{N} \cdot \widetilde{\Delta}_{\overline{k}}$ accordingly. Here $g$ is the generator of the deck transformation group of $\widetilde{M}_\varphi$ sending the fundamental domain $F\times (0,1]$ to $F \times (1,2]$.
	Since we have $i+j+1 \leq N$ for $i \neq N-t_n,  N-1,  N$ as in Equation \eqref{eqn.GE1}, that is, every tetrahedron  attached to $e_i$ is also lifted into the fundamental domain, we obtain Equation~\eqref{eqn.row1} from Remark~\ref{rmk.partial}. Similarly, we obtain Equation~\eqref{eqn.row2} from Equation~\eqref{eqn.GE2}.
\end{proof}

\begin{theorem}\label{thm.X}	
	The twisted 1-loop invariant  of $\Tcal_\varphi$ is given by
	\[\tau^\CS(\Tcal_\varphi, t)  = \det \mathbf{X}\]
	where $\mathbf{X}$ is the $N \times N$ matrix whose $i$-th row $\mathbf{X}_i$ is
	\begin{equation} \label{eqn.Xrow}
		\mathbf{X}_i= \left\{
		\begin{array}{ll}
			\begin{pmatrix} 
				0 & \cdots & 0 & 1 & \frac{2z_{i+1}}{1-z_{i+1}} & \cdots & \frac{2 z_{i+j}}{1-z_{i+j}} & 1 & 0 & \cdots & 0
			\end{pmatrix} & \textrm{if } \varphi_i=R\\[5pt]
			\begin{pmatrix} 
				0 & \cdots & 0 & 1 & \frac{2 }{z_{i+1}-1} & \cdots & \frac{2}{z_{i+j}-1} & 1 & 0 & \cdots & 0
			\end{pmatrix} & \textrm{if } \varphi_i=L\\
		\end{array}
		\right.
	\end{equation}
	for all $1\leq i\leq N$ but $i=N-t_n, N-1, N$ where $j \geq 1$ is the smallest integer satisfying $\varphi_{i+j}=\varphi_{i}$. The rest rows of $\mathbf{X}$ are 
	\begin{equation} \label{eqn.xrest}
		\begin{array}{lcl}
			\mathbf{X}_{N-t_n}&= & 	\begin{pmatrix} 
				0 & \cdots & 0 & 1 & \frac{2 z_{N-t_n+1}}{1-z_{N-t_n+1}} & \cdots & \frac{2 z_{N}}{1-z_{N}}
			\end{pmatrix} + 
			\begin{pmatrix}
				\frac{2 z_{1}}{1-z_{1}}  & 1 & 0 & \cdots & 0
			\end{pmatrix} t	, \\[5pt]
			\mathbf{X}_{N-1}&= & 	\begin{pmatrix} 
				0 & \cdots & 0 & 1  &  \frac{2}{z_{N}-1}
			\end{pmatrix} + 
			\begin{pmatrix}
				1  & 0 & \cdots & 0
			\end{pmatrix} t,	\\[5pt]
			\mathbf{X}_{N}&= & \begin{pmatrix} 
				0 & \cdots & 0 & 1
			\end{pmatrix} +
			\begin{pmatrix}
				\frac{2}{ z_1-1}  & \cdots & \frac{2}{z_{t_1+1}-1} &  1 & 0 & \cdots & 0
			\end{pmatrix} t \, .
		\end{array}
	\end{equation}	
\end{theorem}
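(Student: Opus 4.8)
The plan is to derive the formula straight from the original definition~\eqref{eqn.twist1loop} of the twisted $1$-loop invariant, feeding in the explicit rows of the numerator matrix supplied by Lemma~\ref{lem.twist}. Throughout, write $\mathbf{N}(t):=\mathbf{G}(t)\,\mathrm{diag}(\zeta)+\mathbf{G}'(t)\,\mathrm{diag}(\zeta')+\mathbf{G}''(t)\,\mathrm{diag}(\zeta'')$ for that numerator matrix. The first thing I would do is select a convenient combinatorial flattening, namely $(f,f',f'')=((1,\dots,1)^T,\mathbf{0},\mathbf{0})$: condition~\eqref{eqn.f1} is immediate, and condition~\eqref{eqn.f2} reduces to the statement that every row of $\mathbf{G}$ sums to $2$, which one reads off at once from the gluing equations~\eqref{eqn.GE1}--\eqref{eqn.GE2} --- in each of them the plain $z$-factors are exactly the two ``end'' parameters, each with exponent $1$, contributing total weight $2$ even in the degenerate case where the two indices coincide. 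By Remark~\ref{rmk.thirdcondition} condition~\eqref{eqn.f3} may be dropped, so this flattening is a legitimate input to~\eqref{eqn.twist1loop}; with it, the denominator of~\eqref{eqn.twist1loop} collapses to $\prod_{j=1}^{N}\zeta_j=\prod_{j=1}^{N}z_j^{-1}$, whence $\tau^{\CS}(\Tcal_\varphi,t)=\bigl(\prod_{j=1}^{N}z_j\bigr)\det\mathbf{N}(t)$.

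The second step is to recognize $\mathbf{X}$ as a column rescaling of $\mathbf{N}(t)$. From~\eqref{eqn.zeta} one has $z_k\zeta_k=1$, $z_k\zeta'_k=z_k/(1-z_k)$ and $z_k\zeta''_k=1/(z_k-1)$, so right-multiplication of $\mathbf{N}(t)$ by $\mathrm{diag}(z_1,\dots,z_N)$ sends each entry $\zeta_k$ to $1$, each entry $2\zeta'_k$ to $2z_k/(1-z_k)$, and each entry $2\zeta''_k$ to $2/(z_k-1)$. Matching this against Lemma~\ref{lem.twist} row by row, the generic rows~\eqref{eqn.row1} become precisely~\eqref{eqn.Xrow}, and the three exceptional rows~\eqref{eqn.row2} become precisely~\eqref{eqn.xrest} (the scalar $t$ attached to the exceptional rows is unaffected by the column rescaling). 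This identifies $\mathbf{N}(t)\,\mathrm{diag}(z_1,\dots,z_N)$ with $\mathbf{X}$, and taking determinants gives $\det\mathbf{X}=\bigl(\prod_{j=1}^{N}z_j\bigr)\det\mathbf{N}(t)=\tau^{\CS}(\Tcal_\varphi,t)$, as asserted.

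I do not expect a genuine obstacle: once Lemma~\ref{lem.twist} is in hand the proof is a short bookkeeping verification. The two points that will need a little attention are (i) legitimizing the nonstandard flattening $((1,\dots,1)^T,\mathbf{0},\mathbf{0})$, which is exactly what Remark~\ref{rmk.thirdcondition} permits by allowing condition~\eqref{eqn.f3} to be dropped; and (ii) the three exceptional edges $e_{N-t_n}$, $e_{N-1}$, $e_N$, whose rows are the ones carrying the variable $t$. Since right-multiplication by $\mathrm{diag}(z_1,\dots,z_N)$ is $t$-independent, those rows obey the same rescaling rule as the generic ones, so~\eqref{eqn.row2} passes to~\eqref{eqn.xrest} with no extra work; the standing assumption $t_n\ge 2$ only fixes the precise shape of these three rows and is already built into Lemma~\ref{lem.twist}, so nothing beyond Lemma~\ref{lem.twist} is needed.
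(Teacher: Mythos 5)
Your proposal is correct and follows essentially the same route as the paper: pick the flattening $(f,f',f'')=((1,\dots,1)^T,\mathbf{0},\mathbf{0})$, justify it via Remark~\ref{rmk.thirdcondition} and the observation that each row of $\mathbf{G}$ sums to $2$, and recognize $\mathbf{X}$ as $\mathbf{N}(t)\,\mathrm{diag}(\zeta)^{-1}=\mathbf{N}(t)\,\mathrm{diag}(z_1,\dots,z_N)$ using $\zeta'_k/\zeta_k=z_k/(1-z_k)$ and $\zeta''_k/\zeta_k=1/(z_k-1)$. The paper's proof is exactly this bookkeeping verification.
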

\begin{proof} 
		From Lemma~\ref{lem.twist}  with the  fact that  $\zeta'_i/\zeta_i=z_i/(1-z_i)$ and $\zeta''_i/\zeta_i=1/(z_i-1)$
	we have
	\[\mathbf{X} =\big(\mathbf{G}(t)\, \mathrm{diag}(\zeta)+\mathbf{G}'(t) \,
	\mathrm{diag}(\zeta')+\mathbf{G}''(t) \,
	\mathrm{diag}(\zeta'') \big) \, \mathrm{diag}(\zeta_1,\ldots,\zeta_N)^{-1}.\]
	On the other hand, three vectors $f=(1,\ldots,1)^T$ and $f'=f''=(0,\ldots,0)^T$ satisfy the conditions~\eqref{eqn.f1} and~\eqref{eqn.f2}; it is clear that $f+f'+f'' = (1,\ldots,1)^T$, and $\mathbf{G} f+\mathbf{G}' f' + \mathbf{G}'' f'' =\mathbf{G} f=(2,\ldots,2)^T$ since the gluing equation \eqref{eqn.GE} of each edge $e_i$ has exactly two shape parameters $z_{\overline{i}}$ and $z_{\overline{i+j+1}}$ of the form $z$. 
	Then it follows from Remark~\ref{rmk.thirdcondition} that 
	\[ \tau^\CS(\Tcal_\varphi, t) = \frac{\det\big(
		\mathbf{G}(t)\, \mathrm{diag}(\zeta)+\mathbf{G}'(t) \,
		\mathrm{diag}(\zeta')+\mathbf{G}''(t) \,
		\mathrm{diag}(\zeta'') \big)}{\zeta_1 \cdots \zeta_N} = \det \mathbf{X} \, .\]
\end{proof}

\begin{example}[Continued] \label{ex.twist}
	Applying the case of $\varphi = R^2L^3$ to Theorem \ref{thm.X}, we have
	\begin{equation} \label{eqn.Xample}
		\mathbf{X}=\begin{pmatrix}
			1 & \frac{2z_2}{1-z_2} & 1 & 0 & 0 \\
			0 & 1 &  \frac{2z_3}{1-z_3} & \frac{2z_4}{1-z_4} & \frac{2z_5}{1-z_5} \\
			0 & 0 & 1 &  \frac{2}{z_4-1} & 1 \\
			0 & 0 & 0 & 1 & \frac{2}{z_5-1} \\
			0 & 0 & 0 & 0 & 1 
		\end{pmatrix} + 
		\begin{pmatrix}
			0 & 0 & 0 & 0 & 0 \\
			\frac{2z_1}{1-z_1} &  1 & 0 & 0 & 0 \\
			0 & 0 & 0 & 0 & 0 \\
			1 & 0 & 0 & 0 & 0 \\
			\frac{2}{z_1-1} & \frac{2}{z_2-1} & \frac{2}{z_3-1} & 1 & 0 
		\end{pmatrix} t \, .
	\end{equation}
	An easy exercise is that $\tau^\CS(\Tcal_\varphi,t)=\det \mathbf{X}$ is a polynomial of degree $3$ whose leading coefficient and constant term are $-1$ and $1$, respectively. Also, since $\tau^\CS(\Tcal_\varphi,1)=0$ \cite[Thm.1.7]{GY21},  we have $\tau^\CS(\Tcal_\varphi,t)=1 - \alpha t + \alpha t^2 -t^3$ for some $\alpha \in \Cbb$.
	Indeed, substituting the geometric solution~\eqref{eqn.sol} to $\det \mathbf{X} $, we obtain
	\begin{equation} \label{eqn.extwist}
		\tau^\CS(\Tcal_\varphi,t) \approx 1.00000  - (31.45667 + 9.44217i)\, t + (31.45667 + 9.44217 i)\, t^2 - 1.00000 t^3. 
	\end{equation}
\end{example}

	\subsection{$\sigma$-Ptolemy assignments of $\Tcal_\varphi$} \label{sec.PtolemyOncePun}
%		In this section, we compute $\sigma$-Ptolemy assignments of the canonical triangulation $\Tcal_\varphi$ and present a formula for the adjoint twisted Alexander polynomial of $M_\varphi$ in terms of $\sigma$-Ptolemy assignment.
%		For simplicity we confuse the canonical triangulation $\Tcal_\varphi$ with its underlying space $M_\varphi$.
		
	We start with 
	the once-punctured torus $F$ consisting of two ideal triangles. Chopping off the puncture of $F$, we obtain the compact surface $\overline{F}$ consisted of two truncated triangles. 
	We choose  an assignment $\sigma : \Ecal(\partial \overline{F}) \rightarrow \{\pm1\}$ as in Figure~\ref{fig.puncturedF} (note that the orientations of short-edges do not matter as the target group is $\{\pm1\}$).
	Since we do not have any tetrahedron so far, a $\sigma$-Ptolemy assignment $c : \Ecal(F) \rightarrow \Cbb^\ast$ is an assignment simply satisfying $c(-e)=-c(e)$ for $e \in \Ecal(F)$.   
	Choosing the orientations of the edges $e_{N+1}, \, e_{N+2}$ and $e_{N+3}$ of $F$ as in Figure~\ref{fig.puncturedF},  the set $P^\sigma(F)$ of all $\sigma$-Ptolemy assignments of $F$ is identified with $(\Cbb^\ast)^3$:
\[P^\sigma(F) \simeq (\Cbb^\ast)^3, \quad c \mapsto (c(e_{N+1}), \, c(e_{N+2}), \, c(e_{N+3})).\] 
Here we use the index starting from $N+1$ to match it with  the ones used in Sections~\ref{sec.twistPtolemy} and~\ref{sec.canonical}.

	\begin{figure}[htpb!]
		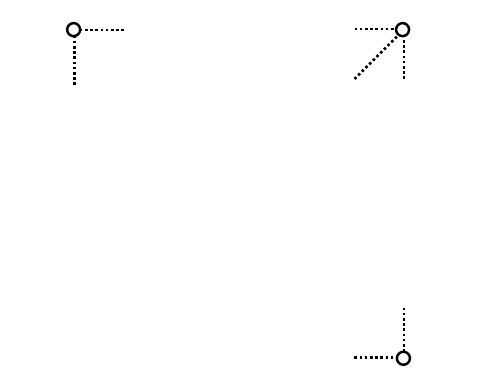
		\caption{The compact surface $\overline{F}$.}
		\label{fig.puncturedF}
	\end{figure}	

	Recall that  a $\sigma$-Ptolemy assignment $c : \Ecal(F) \rightarrow \Cbb^\ast$ determines a cocycle $\rho : \Ecal(\overline{F}) \rightarrow \mathrm{SL}_2(\Cbb)$, and thus 
	there is  a map  $\Phi$ from $P^\sigma(F)$ to  the $\mathrm{SL}_2(\Cbb)$-character variety $\Xcal(F)$ of $F$.
	
	\begin{lemma} \label{prop.algF}
		 The image of $\Phi$ is contained in $\mathrm{tr}_\lambda^{-1}(-2)$ where $\mathrm{tr}_\lambda : \Xcal(F) \rightarrow \Cbb$ is the trace function of  $\lambda = \partial \overline{F}$.
	\end{lemma}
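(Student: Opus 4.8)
The plan is to prove that $\mathrm{tr}_\lambda$ is constantly equal to $-2$ on the image of $\Phi$. Since $\Phi$ sends $c \in P^\sigma(F)$ to the conjugacy class of the representation $\rho : \pi_1(F) \to \mathrm{SL}_2(\Cbb)$ built from the associated cocycle $\rho : \Ecal(\overline{F}) \to \mathrm{SL}_2(\Cbb)$ as in \eqref{eqn.Passign}, it suffices to show that $\mathrm{tr}\,\rho(\lambda) = -2$ for every $c$, where $\lambda$ is the boundary class $\partial\overline{F}$.

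First I would observe that $\rho(\lambda)$ has a very rigid form. By \eqref{eqn.Passign} each oriented short-edge $s$ of $\overline{F}$ is sent by $\rho$ to an upper-triangular matrix whose diagonal is $(\sigma(s), \sigma(s)^{-1})$. The boundary curve $\lambda = \partial\overline{F}$ is the link of the single ideal vertex of $F$, hence is represented by an edge-loop running through short-edges only; therefore $\rho(\lambda)$ is, up to conjugation, a product of such upper-triangular matrices, so it is again upper-triangular with diagonal entries $\sigma(\lambda)$ and $\sigma(\lambda)^{-1}$, where $\sigma : \pi_1(\partial\overline{F}) \to \{\pm1\}$ is the homomorphism induced by the cocycle $\sigma$ (this is exactly \eqref{eqn.Compatible} in the surface setting). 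In particular $\mathrm{tr}\,\rho(\lambda) = \sigma(\lambda) + \sigma(\lambda)^{-1}$, which depends only on $\sigma$ and not on $c$.

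It then remains to evaluate $\sigma(\lambda)$ for the specific cocycle chosen in Figure~\ref{fig.puncturedF}. The link of the vertex of $F$ is a hexagon made of the six short-edges of $\overline{F}$, and Figure~\ref{fig.puncturedF} assigns the value $-1$ to exactly one of them and $+1$ to the other five. Hence $\sigma(\lambda)$, being the product of the $\sigma$-values along the boundary loop, equals $-1$, and therefore $\mathrm{tr}\,\rho(\lambda) = -1 + (-1) = -2$ for every $c \in P^\sigma(F)$. This gives $\mathrm{Image}(\Phi) \subseteq \mathrm{tr}_\lambda^{-1}(-2)$, which is the assertion.

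The only point requiring care is the combinatorial bookkeeping in the last step: one has to read off from the gluing pattern of the two truncated triangles which six short-edges form $\partial\overline{F}$ and in which cyclic order, and then check that the product of the values $\sigma$ assigns to them is $-1$. This is a finite verification; it is insensitive to the orientations of the short-edges (the target group is $\{\pm1\}$) and to the choice of base point (the product around a loop is a well-defined element of $\{\pm1\}$ and the trace is conjugation-invariant), so the diagonal of $\rho(\lambda)$, and hence $\mathrm{tr}\,\rho(\lambda)$, is genuinely independent of $c$; note also that the diagonal scaling of Remark~\ref{rmk.scaling} is irrelevant here, since the diagonal entries of $\rho(\lambda)$ do not involve $c$ at all.
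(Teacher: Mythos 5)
Your proposal is correct and follows essentially the same argument as the paper: the boundary loop $\lambda=\partial\overline{F}$ traverses the six short-edges, $\rho$ sends each to an upper-triangular matrix with diagonal $(\sigma(s),\sigma(s)^{-1})$ by \eqref{eqn.Passign}, and the product of the assigned signs in Figure~\ref{fig.puncturedF} is $-1$, giving $\mathrm{tr}\,\rho(\lambda)=-2$ independently of $c$.
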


	\begin{proof}
		The boundary curve $\lambda = \partial \overline{F}$ consists of 6 short-edges and the product of signs that $\sigma$ assigns to them is $-1$. It thus follows from 
		Equation~\eqref{eqn.Passign} that the the image of $\lambda$ under the representation $\rho$ is 
		\[ \rho(\lambda) = \begin{pmatrix} -1 & \ast \\ 0 & -1 \end{pmatrix} \]
		up to conjugation, hence $\mathrm{tr} \rho(\lambda)=-2$.
	\end{proof}

	\begin{remark} \label{rmk.coord}
		The fundamental group of $F$ is the free group with two generators, say $g_1$ and $g_2$, and the character variety $\Xcal(F)$  is identified with $\Cbb^3$ with coordinates $\tr_{g_1}$, $\tr_{g_2}$ and $\tr_{g_1 g_2}$.  Using Equation \eqref{eqn.Passign}, we explicitly compute that
		\begin{equation}  \nonumber
			\rho(g_1) = \begin{pmatrix}
				\frac{x^2 + y^2}{x z} & \frac{y}{x^2} \\[4pt]
				y & \frac{z}{x}, 
			\end{pmatrix}, \quad
			\rho(g_2)= \begin{pmatrix}
			0 & -\frac{1}{x} \\[4pt]
			x & - \frac{x^2 + y^2+z^2}{yz}
		\end{pmatrix}
		\end{equation}
	with
		\begin{equation}  \nonumber
			\tr \rho(g_1) = \frac{x^2 +y^2 + z^2}{x z}, \quad 
			\tr \rho(g_2)=  -\frac{x^2 +y^2 + z^2}{y z}, \quad		
			\tr \rho(g_1 g_2)= - \frac{x^2 +y^2 + z^2}{x y} \,.
		\end{equation}
		where $(x,y,z)=(c(e_{N+1}),c(e_{N+2}),c(e_{N+3}))$.
	Also, one can double-check that $\tr \rho(\lambda) = \tr \rho(g_1g_2 g_1^{-1} g_2^{-1}) = -2$ and Remark~\ref{rmk.scaling}.
	\end{remark}

	Recall that  the manifold $M_i$ ($1 \leq i \leq N$) is obtained from $M_{i-1}$  by attaching an ideal tetrahedron $\Delta_i$ to $M_{i-1}$ as in Figure~\ref{fig.LRaction} ($M_0$ is the once-punctured torus $F$).
	For simplicity we confuse the ideal triangulation of $M_i$ with its underlying space $M_i$.
	 We denote by  $\overline{M}_i$ the compact manifold obtained from $M_i$ by replacing $\Delta_1,\ldots, \Delta_i$ to their truncations $\overline{\Delta}_1,\ldots, \overline{\Delta}_i$.
	
	We first extend $\sigma : \Ecal(\partial \overline{M}_{i-1}) \rightarrow \{ \pm1\}$ to $\partial \overline{M}_i$ so that the extension $\sigma :  \Ecal(\partial \overline{M}_i) \rightarrow \{ \pm1\}$ (also denoted by $\sigma$) satisfies the cocycle conditions.
	Explicitly, the truncated tetrahedron $\overline{\Delta}_i$ has 12 short-edges, 6 of which are contained in $\overline{M}_{i-1}$. Thus there are 6 new short-edges when we attach $\overline{\Delta}_i$ to $\overline{M}_{i-1}$, and  we assign $+1$ to all of these new short-edges but one; assign $-1$ to the one in the bottom-left as in Figure \ref{fig.LRPtolemy} (see also Figure \ref{fig.puncturedF}).
	One easily checks that $\sigma :  \Ecal(\partial \overline{M}_i) \rightarrow \{ \pm1\}$  is indeed a cocycle
  (and such an extension is unique).
	
	\begin{figure}[htpb!]
		\input{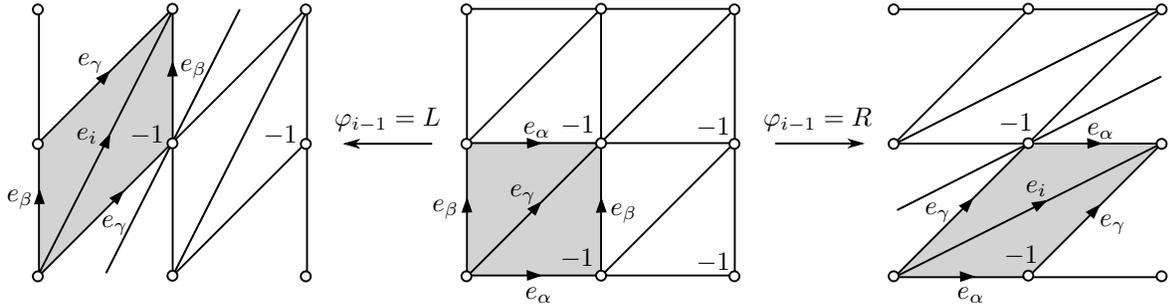}
		\caption{Four copies of $F$ and the way of attaching $\Delta_i$.}
		\label{fig.LRPtolemy}
	\end{figure}

	We then extend a $\sigma$-Ptolemy assignment $c:\Ecal(M_{i-1}) \rightarrow \Cbb^\ast$ to $c:\Ecal(M_i) \rightarrow \Cbb^\ast$ (also denoted by $c$) so that the $\sigma$-Ptolemy equation of $\Delta_i$ is satisfied.
	Explicitly, if we orient the edges $e_\alpha, e_\beta, e_\gamma$ and $e_i$  of  $\Delta_i$ as in Figure~\ref{fig.LRPtolemy}, then the $\sigma$-Ptolemy equation of $\Delta_i$ is  
	\begin{equation} \label{eqn.Ptolemyr}
	\left\{
	\begin{array}{ll}
		c(e_i) c(e_\beta) - c(e_\alpha)^2-c(e_\gamma)^2=0 & \textrm{if } \varphi_{i-1}=R\\[10pt]
		c(e_i) c(e_\alpha) - c(e_\beta)^2 - c(e_\gamma)^2 =0 & \textrm{if } \varphi_{i-1}=L\\
	\end{array} 
	\right.  
\end{equation}	
	and therefore  
	\begin{equation} \label{eqn.PtolemyI}
		c(e_i) =\left\{
		\begin{array}{ll}
			\dfrac{c(e_\alpha)^2+c(e_\gamma)^2}{c(e_\beta)} & \textrm{if } \varphi_{i-1}=R\\[10pt]
			\dfrac{c(e_\beta)^2+c(e_\gamma)^2}{c(e_\alpha)} & \textrm{if } \varphi_{i-1}=L\\
		\end{array}
		\right.  .
	\end{equation}	
	Note that (as in Section \ref{sec.canonical}) $e_i$ is the new edge created when we attach $\Delta_i$ to $M_{i-1}$  and that $c(e_i)$, $1 \leq i\leq N$ is a rational function in the three initial variables $c(e_{N+1}), \, c(e_{N+2})$ and $c(e_{N+3})$.
	
	\begin{remark} Recall  that
			a $\sigma$-Ptolemy assignment determines the shapes of tetrehedra. Precisely, for Figure~\ref{fig.LRPtolemy} the shape parameter $z_i$ of $\Delta_i$ at the edge $e_i$ is given by
		\begin{equation} \label{eqn.shape}
			z_i= \left\{
			\begin{array}{ll}
				- c(e_\gamma)^2/ c(e_\alpha)^2 & \textrm{if } \varphi_{i-1}=R\\[5pt]
				- c(e_\beta)^2 / c(e_\gamma)^2 & \textrm{if } \varphi_{i-1}=L\\
			\end{array}
			\right. .
		\end{equation}	
	\end{remark}	

	The boundary of $M_i$ ($1 \leq i\leq N$) consists of four ideal triangles. We regard two of them forming the initial punctured torus $M_0$ as  a \emph{negative boundary} and the other two triangles as a \emph{positive boundary}, so that the manifold $M_\varphi$ is obtained  by identifying the negative and positive boundaries of $M_N$ in a trivial way. 
	It is clear from Figure~\ref{fig.LRPtolemy} that $\sigma$ assigns the signs for the negative and positive boundaries of $M_i$ in the same fashion. It follows that the cocycle  $\sigma : \Ecal(\partial \overline{M}_N)\rightarrow \{\pm1\}$ induces $\sigma : \Ecal(\partial \overline{M}_\varphi)\rightarrow \{\pm1\}$ (also denoted by $\sigma$).
	On the other hand, the triple of horizontal, vertical, and diagonal edges of the negative boundary of  $M_i$ is $(e_{N+1},\, e_{N+2},\, e_{N+3})$.  See Figure~\ref{fig.puncturedF}.
	Such triples for positive boundaries depend on $i$, as we attach $\Delta_i$ to the positive boundary of $M_{i-1}$ as in Figure~\ref{fig.LRPtolemy}:
	\begin{equation} \label{eqn.change}
		\begin{array}{ll}
		(e_\alpha, \, e_\beta, \, e_\gamma) \mapsto (e_\alpha, \, e_\gamma, \, e_i) & \quad \textrm{if } \varphi_{i-1}= R, \\[2pt]
		(e_\alpha, \, e_\beta, \, e_\gamma) \mapsto (e_\gamma, \, e_\beta, \, e_i) & \quad \textrm{if } \varphi_{i-1} = L. 
	\end{array}
	\end{equation}
	It follows that the triple for the positive boundary of $M_N$ is $(e_{N-1},\, e_{N-t_n}, \, e_N)$. Therefore, a $\sigma$-Ptolemy assignment $c : \Ecal(M_N) \rightarrow \Cbb^\ast$ is required to satisfy
	\begin{equation} \label{eqn.id}
		c(e_{N+1}) = c(e_{N-1}), \quad c(e_{N+2}) =c(e_{N-t_n}), \quad c(e_{N+3}) = c(e_N)
	\end{equation}		
	in order to become a $\sigma$-Ptolemy assignment of $\Tcal_\varphi$.
	Summing up the above, we obtain:
	
	\begin{proposition}
		A $\sigma$-Ptolemy assignment $c \in P^\sigma(\Tcal_\varphi) $ is represented by a point $(c(e_{1}),\ldots,c(e_{N+3})) \in (\Cbb^\ast)^{N+3}$ satisfying Equation~\eqref{eqn.PtolemyI} for all $1 \leq i \leq N$ and Equation~\eqref{eqn.id}.
	\end{proposition}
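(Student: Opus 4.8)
The plan is to verify that the constraints listed in the proposition -- namely the defining equation~\eqref{eqn.PtolemyI} for the new edges together with the gluing identification~\eqref{eqn.id} -- are precisely the conditions that make an assignment $c : \Ecal(\Tcal_\varphi) \to \Cbb^\ast$ into a $\sigma$-Ptolemy assignment, i.e.\ that it satisfies $c(-e)=-c(e)$ and the $\sigma$-Ptolemy equation of every tetrahedron $\Delta_1,\ldots,\Delta_N$. First I would record that an assignment $c$ on the oriented edges of $\Tcal_\varphi$ is determined by its values on a choice of orientation of each unoriented edge, because of the rule $c(-e)=-c(e)$; so it suffices to track $c(e_1),\ldots,c(e_N)$ together with the three edges $e_{N+1},e_{N+2},e_{N+3}$ of the negative boundary surface $M_0=F$, with orientations fixed as in Figures~\ref{fig.puncturedF}~and~\ref{fig.LRPtolemy}.

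Next I would run the inductive construction: starting from $c \in P^\sigma(F)\simeq (\Cbb^\ast)^3$, attaching $\Delta_i$ to $M_{i-1}$ introduces exactly one new edge $e_i$, and by the computation already carried out in the excerpt the $\sigma$-Ptolemy equation of $\Delta_i$ is~\eqref{eqn.Ptolemyr}, which (since the two denominator values $c(e_\beta)$ resp.\ $c(e_\alpha)$ are among the previously defined nonzero quantities) is equivalent to the explicit formula~\eqref{eqn.PtolemyI} for $c(e_i)$. Because the signs of the newly created short-edges were fixed so that $\sigma$ remains a cocycle on $\partial\overline{M}_i$ -- with all new short-edges assigned $+1$ except the distinguished bottom-left one assigned $-1$ -- the coefficients $\sigma(s_{\cdots})/\sigma(s_{\cdots})$ appearing in the general $\sigma$-Ptolemy equation~\eqref{eqn.sgPtolemy} all evaluate to the specific $\pm1$ that turns~\eqref{eqn.sgPtolemy} into the stated form~\eqref{eqn.Ptolemyr}; this is a direct (if slightly tedious) sign bookkeeping using the orientation conventions of Figure~\ref{fig.truncation} transported to Figure~\ref{fig.LRPtolemy}. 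Iterating for $i=1,\ldots,N$ shows that any tuple $(c(e_1),\ldots,c(e_{N+3}))$ satisfying~\eqref{eqn.PtolemyI} for all $i$ gives a well-defined assignment on $\Ecal(M_N)$ satisfying the $\sigma$-Ptolemy equation of every tetrahedron.

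Then I would address the closing-up step. The manifold $M_\varphi$ is obtained from $M_N$ by gluing the positive boundary to the negative boundary trivially, so an assignment on $\Ecal(M_N)$ descends to one on $\Ecal(\Tcal_\varphi)$ exactly when the values it assigns to the three edges of the positive boundary agree with the values on the corresponding three edges of the negative boundary. By tracking how the triple (horizontal, vertical, diagonal) of the positive boundary evolves under each $R$ or $L$ move via~\eqref{eqn.change}, one computes that after all $N$ moves the positive-boundary triple is $(e_{N-1},\,e_{N-t_n},\,e_N)$ while the negative-boundary triple is always $(e_{N+1},\,e_{N+2},\,e_{N+3})$; hence the gluing is compatible with $c$ precisely when~\eqref{eqn.id} holds. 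One should also check orientation compatibility here -- that the identified edges are glued respecting the fixed orientations, so that no extra sign intervenes -- which again follows because $\sigma$ was arranged to assign signs to the negative and positive boundaries ``in the same fashion.'' Conversely, any $c\in P^\sigma(\Tcal_\varphi)$ restricts to an assignment on $M_N$ of the above form, so the correspondence is a bijection.

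The main obstacle is the sign bookkeeping in the second step: one must confirm that the particular choice of $\sigma$ (all new short-edges $+1$ except the bottom-left one) makes the ratios of $\sigma$-values in the general equation~\eqref{eqn.sgPtolemy} collapse to the clean signs appearing in~\eqref{eqn.Ptolemyr}, and that this works uniformly for both the $R$-attachment and the $L$-attachment of $\Delta_i$. This is where a careful case analysis using Figures~\ref{fig.truncation},~\ref{fig.puncturedF}~and~\ref{fig.LRPtolemy} is needed; everything else is an essentially formal induction plus the combinatorial edge-tracking already encoded in~\eqref{eqn.change}.
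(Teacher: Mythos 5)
Your proposal is correct and follows essentially the same route as the paper: there the proposition is stated as a summary (``Summing up the above, we obtain'') of exactly the inductive construction you describe --- extending $\sigma$ and $c$ tetrahedron by tetrahedron so that the $\sigma$-Ptolemy equation takes the form \eqref{eqn.Ptolemyr}, hence \eqref{eqn.PtolemyI}, and then closing up via the edge-tracking \eqref{eqn.change} to arrive at \eqref{eqn.id}. Nothing essential is missing.
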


	\begin{proposition} There exists a $\sigma$-Ptolemy assignment $c \in P^\sigma(\Tcal_\varphi)$ corresponding to the geometric solution of $\Tcal_\varphi$.
	\end{proposition}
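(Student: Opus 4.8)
The plan is to deduce the statement from Proposition~\ref{prop.exist}: since a geometric solution of $\Tcal_\varphi$ exists by hypothesis and the geometric representation of $M_\varphi$ admits an $\mathrm{SL}_2(\Cbb)$-lift, it is enough to check that the cocycle $\sigma\colon\Ecal(\partial\overline M_\varphi)\to\{\pm1\}$ constructed in Section~\ref{sec.PtolemyOncePun} is an \emph{obstruction cocycle}, i.e.\ that it satisfies Equation~\eqref{eqn.Compatible} for some lift $\rho$ of the geometric representation. Since $\pi_1(\partial\overline M_\varphi)\cong\Zbb^2$ is generated by $\mu$ and $\lambda$, and the geometric holonomy sends it into a rank-two parabolic (hence simultaneously upper-triangular) subgroup, Equation~\eqref{eqn.Compatible} for the whole peripheral group reduces to matching two homomorphisms $\pi_1(\partial\overline M_\varphi)\to\{\pm1\}$ — the one induced by $\sigma$ and the one recording the diagonal signs of $\rho$ — and such a homomorphism is determined by its values on $\mu$ and on $\lambda$.

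First I would determine $\sigma(\lambda)$. The curve $\lambda$ is freely homotopic in $\partial\overline M_\varphi$ to the boundary $\partial\overline F$ of a fiber, which in the induced triangulation of the peripheral torus runs along the six short-edges of $\partial\overline F$; by the sign count in the proof of Lemma~\ref{prop.algF} their product is $-1$, so $\sigma(\lambda)=-1$. Next I would show that \emph{every} $\mathrm{SL}_2(\Cbb)$-lift $\rho$ of the geometric representation has diagonal sign $-1$ on $\lambda$ as well. Writing $\pi_1(F)=\langle g_1,g_2\rangle$ with $\lambda=[g_1,g_2]$ up to conjugacy, the matrix $\rho(\lambda)=[\rho(g_1),\rho(g_2)]$ does not depend on the lift; the restriction $\rho|_{\pi_1(F)}$ is irreducible, since $\pi_1(F)\trianglelefteq\pi_1(M_\varphi)$ with infinite cyclic quotient and reducibility would force $\rho(\pi_1(M_\varphi))$ to preserve a finite subset of $\mathbb{P}^1(\Cbb)$ (or to be virtually abelian), contradicting that it is a non-elementary lattice in $\mathrm{PSL}_2(\Cbb)$. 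Hence $\tr\rho(\lambda)\neq 2$ by the commutator trace identity (which gives value $2$ precisely for reducible pairs), while $\rho(\lambda)$ is parabolic because $\lambda$ is a nontrivial peripheral element of the cusped hyperbolic manifold $M_\varphi$. Therefore $\tr\rho(\lambda)=-2$, so $\rho(\lambda)$ is conjugate to $\bigl(\begin{smallmatrix}-1&\ast\\0&-1\end{smallmatrix}\bigr)$ and the $\lambda$-sign of $\rho$ equals $-1=\sigma(\lambda)$.

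It then remains only to match the $\mu$-sign, and here I would twist the lift. Let $\epsilon\colon\pi_1(M_\varphi)\to\{\pm1\}$ be $\alpha$ followed by the reduction $\Zbb\to\Zbb/2$; then $\epsilon\cdot\rho$ is again an $\mathrm{SL}_2(\Cbb)$-lift of the geometric representation, and since $\alpha(\lambda)=0$ and $\alpha(\mu)=\pm1$, the lifts $\rho$ and $\epsilon\cdot\rho$ have the same diagonal sign on $\lambda$ but opposite signs on $\mu$. Hence exactly one of them realizes the homomorphism induced by $\sigma$, which shows that $\sigma$ is an obstruction cocycle; Proposition~\ref{prop.exist} then produces a $\sigma$-Ptolemy assignment $c\in P^\sigma(\Tcal_\varphi)$ corresponding to the geometric solution. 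The step I expect to be the main obstacle is establishing $\tr\rho(\lambda)=-2$ rather than $+2$ — that is, confirming that the combinatorial sign convention built into $\sigma$ along $\partial\overline F$ is compatible with the sign forced on $\lambda$ by any $\mathrm{SL}_2(\Cbb)$-lift; the remaining steps are routine bookkeeping or direct appeals to Section~\ref{sec.ptolemy} and \cite{Yoon19}.
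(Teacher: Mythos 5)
Your proposal is correct and follows the same overall structure as the paper's proof: reduce to showing that $\sigma$ is an obstruction cocycle via Proposition~\ref{prop.exist}, note that this comes down to matching the sign homomorphism of $\sigma$ with that of an $\mathrm{SL}_2(\Cbb)$-lift on $\mu$ and $\lambda$, establish the forced sign $-1$ on $\lambda$, and then twist the lift to fix the $\mu$-sign (your twist by $\alpha \bmod 2$ is exactly the paper's $\overline\rho(\mu)=-\rho(\mu)$ since $g_1,g_2\in\ker\alpha$).

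The one genuine divergence is how you establish $\tr\rho(\lambda)=-2$. The paper invokes Calegari's theorem~\cite{Cal06} that the geometric representation of a 1-cusped hyperbolic 3-manifold has no boundary-unipotent $\mathrm{SL}_2(\Cbb)$-lift, and then uses that to conclude $\tr\rho(\lambda)=-2$; in Section~\ref{sec.ATAP} it then deduces irreducibility of $\rho|_{\pi_1(F)}$ as a consequence. You run the implication the other way: prove irreducibility of the fiber restriction directly (normality of $\pi_1(F)$ plus the fact that a non-elementary lattice cannot stabilize a finite subset of $\mathbb{P}^1$), note that $\lambda=[g_1,g_2]$ so $\rho(\lambda)$ is lift-independent, apply the Fricke identity ($\tr[A,B]=2$ iff $\langle A,B\rangle$ is reducible) to get $\tr\rho(\lambda)\neq 2$, and finish with parabolicity. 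This is more self-contained and more elementary, though it exploits the special structure of once-punctured torus bundles (a single free fiber with peripheral class a commutator), whereas the paper's appeal to~\cite{Cal06} is the route that generalizes to arbitrary 1-cusped manifolds. Your phrasing of the $\mu$-step is also slightly more robust than the paper's, since you do not presuppose $\sigma(\mu)=1$ but simply note that exactly one of the two lifts matches whatever $\sigma(\mu)$ turns out to be.
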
	
	\begin{proof} 
		It suffices to show that the cocycle $\sigma$ is an obstruction cocycle (see Proposition~\ref{prop.exist}).
		Let $\rho$ be an $\mathrm{SL}_2(\Cbb)$-lift of the geometric representation of $M_\varphi$.
		 The fundamental group of $\pi_1(M_\varphi)$ has a presentation 
		\[ \left\langle g_1,g_2, \mu \, | \, \mu g_1 \mu^{-1} = \varphi_\ast(g_1),\  \mu g_2 \mu^{-1} = \varphi_\ast(g_2) \right\rangle \]
		where $\mu$ is the base circle of the bundle $M_\varphi \rightarrow S^1$ and $\varphi_\ast : \pi_1(F) \rightarrow \pi_1(F)$ is the isomorphism induced from the monodromy $\varphi$.
		This implies that we may assume that $\tr \rho(\mu)=2$; if $\tr \rho(\mu)=-2$, then we consider another lift $\overline{\rho}$ defined by $\overline{\rho}(g_1)={\rho}(g_1)$, $\overline{\rho}(g_2)={\rho}(g_2)$ and $\overline{\rho}(\mu)=-{\rho}(\mu)$.
		On the other hand,  the geometric representation does not admit an $\mathrm{SL}_2(\Cbb)$-lift which is boundary-unipotent \cite{Cal06}.
		Hence, we have $\tr {\rho}(\lambda)=-2$ and 
		\begin{equation}\label{eqn.compatible} 	
		{\rho}(\mu) = \begin{pmatrix} \sigma(\mu) & \ast \\ 0 & \sigma(\mu)^{-1} \end{pmatrix}, \quad
		{\rho}(\lambda) = \begin{pmatrix} \sigma(\lambda) & \ast \\ 0 & \sigma(\lambda)^{-1} \end{pmatrix}
		\end{equation}
		up to conjugation. This proves that $\sigma$ is an obstruction cocycle.

	\end{proof}

%	\begin{remark} 
%		The obstruction for lifting a boundary-parabolic $\mathrm{PSL}_2(\Cbb)$-representation to a boundary-unipotent $\mathrm{SL}_2(\Cbb)$-representation is a class in $H^2(\overline{M}_\varphi, \partial \overline{M}_\varphi;\{\pm1\}) \simeq H_1(\overline$.
%		
%		
%		The cocycle $\sigma : \Ecal(\overline{M}_\varphi) \rightarrow \{\pm1\}$ induces the homomoprhism $\pi_1(M)\rightarrow \{ \pm 1\}$ which is T 
%	\end{remark}

	\begin{example}[Continued]\label{ex.Ptolemy}
			For $\varphi = R^2L^3$ the set $P^\sigma(\Tcal_\varphi)$  consists of points $(c(e_{1}), \ldots, c(e_{8})) \in (\Cbb^\ast)^8$ satisfying
			\begin{equation} \label{eqn.exP1}
				\begin{array}{lllllll}
					& c(e_1)  = \dfrac{c(e_{7})^2 + c(e_{8})^2}{c(e_{6})}, & c(e_2) = \dfrac{c(e_8)^2 + c(e_1)^2}{c(e_{7})},
					& c(e_3) = \dfrac{c(e_{8})^2 + c(e_2)^2}{c(e_1)}, \\[7pt]
					& c(e_4) = \dfrac{c(e_2)^2+c(e_3)^2}{c(e_8)}, &c(e_5)  = \dfrac{c(e_2)^2 + c(e_4)^2}{c(e_3)},
				\end{array}
			\end{equation}
			and 
			\begin{equation} \label{eqn.exP2}
				c(e_{6}) = c(e_4), \quad c(e_{7}) = c(e_2), \quad  c(e_8) = c(e_5)\,.
			\end{equation}
			As $P^\sigma(\Tcal_\varphi)$ admits the multiplication action by elements of $\Cbb^\ast$ (see Remark~\ref{rmk.scaling}), we may assume that $c(e_{6})=1$.
			Then there are finitely many solutions to Equations \eqref{eqn.exP1} and \eqref{eqn.exP2}. Among them, we  find a solution corresponding to the geometric solution~\eqref{eqn.sol} by using Equation~\eqref{eqn.shape}:	 the shape parameters of $\Delta_1,\ldots,\Delta_5$ are
			\begin{equation} \label{eqn.exShp}
				( z_1, z_2,z_3,z_4,z_5) = \left( - \frac{c(e_{7})^2}{c(e_8)^2}, \, - \frac{c(e_1)^2}{c(e_8)^2}, \, - \frac{c(e_2)^2}{c(e_8)^2}, \, 
				- \frac{c(e_2)^2}{c(e_3)^2},\,  - \frac{c(e_2)^2}{c(e_4)^2}\right).
			\end{equation}
			A numerical computation shows that the three initial variables corresponding to the geometric solution~\eqref{eqn.sol} are
			\begin{equation} \label{eqn.geomptol}
					(c(e_{6}), \, c(e_{7}), \, c(e_8)) \approx (1.00000, \, 0.26938 - 0.65395 i, \, 0.64492 - 0.35232i)
			\end{equation}
			with
			\[(c(e_1),\ldots,c(e_5)) \approx ( -0.06329 - 0.80677i, \, 0.26938 - 0.65395 i, \, 1.00000, \, 1.00000, \, 0.64492 - 0.35232 i).\]
	\end{example}

	\subsection{Adjoint twisted Alexander polynomials} \label{sec.ATAP}
	
	We now fix a $\sigma$-Ptolemy assignment $c \in P^\sigma(\Tcal_\varphi)$  and a representation $\rho:\pi_1(M_\varphi) \rightarrow \mathrm{SL}_2(\Cbb)$ determined by $c$ (up to conjugation). 
	We denote by $H^\ast(M_\varphi;V)$ the twisted cohomology of $M_\varphi$ 	where the coefficient $V=\sl \otimes \Cbb[t^{\pm1}]$ is twisted by $\mathrm{Ad}\rho \otimes \alpha$. 
	Here $\mathrm{Ad}\rho$ denotes the adjoint action associated to $\rho$.
	\begin{theorem} \label{thm.ATAP}
		The adjoint twisted Alexander polynomial $\tau(\rho,t)$ associated to $\rho$ is given by
	\begin{equation} \label{eqn.ATAP}
		\tau(\rho,t) = \det \left( t I - \dfrac{ \partial (c(e_{N-1}), c(e_{N-t_n}), c(e_N))}{ \partial(c(e_{N+1}), c(e_{N+2}),c(e_{N+3}))} \right)
	\end{equation}
	if $H^\ast(M_\varphi;V)$ is trivial and the restriction of $\rho$ to the  punctured torus $F$ is irreducible.
	Here $c(e_{N-t_n}),c(e_{N-1})$ and $c(e_N)$ are viewed as (rational) functions in $c(e_{N+1}), c(e_{N+2})$ and $c(e_{N+3})$.
	\end{theorem}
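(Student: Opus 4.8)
The plan is to identify the adjoint twisted Alexander polynomial of a once-punctured torus bundle with the characteristic polynomial of the map induced by the monodromy on a suitable cohomology (or coordinate) space, and then to recognize that map concretely via Ptolemy coordinates. The bundle structure $M_\varphi \to S^1$ gives a Wang-type exact sequence relating $H^\ast(M_\varphi;V)$ to the twisted cohomology $H^\ast(F;\mathfrak{sl}_2(\Cbb))$ of the fiber with the action $\mathrm{Ad}\rho|_F$, on which the monodromy $\varphi$ acts. Under the hypothesis that $\rho|_F$ is irreducible, $H^0(F;\mathfrak{sl}_2(\Cbb))$ vanishes and $H^1(F;\mathfrak{sl}_2(\Cbb))$ is $3$-dimensional; standard arguments (e.g.\ Milnor's formula for the torsion of a mapping torus, cf.\ Fried, or the Wang sequence computation of twisted Alexander polynomials of fibered $3$-manifolds) then give that $\tau(\rho,t) = \det\!\big(tI - \varphi^\ast\big)$ up to a unit, where $\varphi^\ast$ is the action of the monodromy on $H^1(F;\mathfrak{sl}_2(\Cbb))\cong\Cbb^3$, and the nondegeneracy hypothesis ($H^\ast(M_\varphi;V)$ trivial) ensures $\det(I-\varphi^\ast)\neq 0$ so this expression is genuinely a well-defined polynomial with the right normalization.

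The second, more geometric, step is to reinterpret $H^1(F;\mathfrak{sl}_2(\Cbb))$ and the monodromy action in terms of Ptolemy coordinates. By Remark~\ref{rmk.coord}, $P^\sigma(F)\simeq(\Cbb^\ast)^3$ surjects (up to the diagonal scaling of Remark~\ref{rmk.scaling}) onto an open subset of the slice $\mathrm{tr}_\lambda^{-1}(-2)\subset\Xcal(F)$, and near an irreducible $\rho|_F$ this slice is a smooth surface whose tangent space is naturally $H^1(F;\mathfrak{sl}_2(\Cbb))$ modulo the line coming from the boundary trace constraint; passing to the cone $P^\sigma(F)$ restores the missing dimension, so the tangent space to $P^\sigma(F)$ at the geometric point is a model for $H^1(F;\mathfrak{sl}_2(\Cbb))$ together with the scaling direction. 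The monodromy $\varphi$ acts on $P^\sigma(F)$ exactly by the composite of the elementary moves \eqref{eqn.change}: attaching the tetrahedra $\Delta_1,\dots,\Delta_N$ one at a time transports the initial Ptolemy triple $(c(e_{N+1}),c(e_{N+2}),c(e_{N+3}))$ to the final triple $(c(e_{N-1}),c(e_{N-t_n}),c(e_N))$, each expressed as a rational function of the previous one via \eqref{eqn.PtolemyI}. Hence the derivative of $\varphi$ on the tangent space of $P^\sigma(F)$ at the geometric point is precisely the Jacobian matrix
\[
\dfrac{\partial\,(c(e_{N-1}),c(e_{N-t_n}),c(e_N))}{\partial\,(c(e_{N+1}),c(e_{N+2}),c(e_{N+3}))},
\]
and its characteristic polynomial is the right-hand side of \eqref{eqn.ATAP}.

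Combining the two steps, I would argue that the action of $\varphi^\ast$ on $H^1(F;\mathfrak{sl}_2(\Cbb))$ agrees, up to conjugacy, with the above Jacobian on the tangent space to $P^\sigma(F)$: the scaling direction of Remark~\ref{rmk.scaling} is fixed by $\varphi$ (each move in \eqref{eqn.change} is degree-one homogeneous), so it contributes an eigenvalue $1$ to the Jacobian; meanwhile the closed-up identification \eqref{eqn.id} shows that the fixed locus of $\varphi$ on $P^\sigma(F)$ corresponds to $\sigma$-Ptolemy assignments of $\Tcal_\varphi$, so the coincidence of determinants is compatible with the nondegeneracy hypothesis. Since both $\tau(\rho,t)$ and $\det(tI-(\text{Jacobian}))$ are monic of degree $3$, matching their roots (equivalently, matching the traces on $H^1$ versus on the tangent space of $P^\sigma(F)$, with the extra trivial eigenvalue on each side canceling) gives the claimed equality. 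The main obstacle is the bookkeeping in this last matching step: one must check carefully that the Ptolemy scaling direction is exactly the ``extra'' $1$-eigendirection and not something that interacts with $H^1(F)$, i.e.\ that the differential of $\Phi:P^\sigma(F)\to\Xcal(F)$ identifies the quotient tangent space with $H^1(F;\mathfrak{sl}_2(\Cbb))$ equivariantly under $\varphi$; the irreducibility of $\rho|_F$ and the smoothness of $\mathrm{tr}_\lambda^{-1}(-2)$ at $\rho|_F$ are what make this work, and the triviality of $H^\ast(M_\varphi;V)$ is what guarantees $\det(I-\varphi^\ast)\neq 0$ so that no degeneracy spoils the degree count.
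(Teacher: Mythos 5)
Your proposal is correct and follows essentially the same route as the paper's proof: Fried's mapping-torus formula reduces $\tau(\rho,t)$ to $\det(tI-\varphi^\ast)$ on $H^1(F;\sl)\cong T_{[\eta]}\Xcal(F)$, the eigenvalue $1$ on the character-variety side (from invariance of $\tr_\lambda$) matches the eigenvalue $1$ of the Jacobian coming from the homogeneity of the Ptolemy recursion, and the remaining two-dimensional blocks are identified via the Ptolemy parametrization of $\mathrm{tr}_\lambda^{-1}(-2)$. The only wording to tighten is that $T_{[\eta]}(\mathrm{tr}_\lambda^{-1}(-2))$ is the kernel of $d\tr_\lambda$, hence a $\varphi^\ast$-invariant \emph{subspace} of $H^1(F;\sl)$ rather than a quotient, which is precisely what makes $\varphi^\ast$ block-triangular with a $1$ in the corner as in the paper.
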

	\begin{proof} 
		As the restriction $\eta$ of $\rho$ to $F$ is irreducible, we have $H^0(F;V)=H^0(F;\sl)=0$. Hence the cohomology $H^i(F;V)$ is non-trivial only for $i=1$.
		Then a routine computation of the Mayer-Vietoris sequence for a bundle over the circle $S^1$ (see e.g. \cite{Fried86}) shows that
		\[\tau(\rho,t)  = \det (t I - \varphi^\ast)\]
		where $\varphi^\ast : H^1(F; V) \rightarrow H^1(F;V)$ is the isomorphism induced from the monodromy $\varphi$. 
		As the restriction of $\alpha$ to $F$ is trivial, we may reduce the coefficient $V$ to $\sl$. Also, it is known that $H^1(F;\sl)$ is isomorphic to the Zariski tangent space $T_{[\eta]} \Xcal(F)$ of the character variety $\Xcal(F)$ at the conjugacy class of $\eta$ (see e.g. \cite{Wei64,Sik12}). Hence we may regard $\varphi^\ast$ as an automorphism of $ T_{[\eta]} \Xcal(F)$.		
		We now consider the differential  $d \tr_\lambda: T_{[\eta]} \Xcal(F) \rightarrow \Cbb$ of the trace function $\tr_\lambda : \Xcal(F) \rightarrow \Cbb$  of $\lambda$ at $[\eta]$.
		Since $[\eta] \in \tr_\lambda^{-1}(-2) \subset \Xcal(F)$
		(see Lemma~\ref{prop.algF}), the kernel of $d \tr_\lambda$ is equal to $T_{[\eta]} (\tr_\lambda^{-1}(-2))$.
		On the other hand, the monodromy $\varphi$  induces an action on $\Xcal(F)$ fixing $[\eta]$ and preserving $\tr_\lambda$ (since $\varphi$ preserves the boundary curve $\lambda$). It follows that $\varphi^\ast : T_{[\eta]} \Xcal(F) \rightarrow T_{[\eta]} \Xcal(F)$ has a matrix form 
		\[
		\varphi^\ast 	=
		\begin{pmatrix}
			&  & \rvline & * \\
			\multicolumn{2}{c}{\smash{\raisebox{.5\normalbaselineskip}{$A$}}}   & \rvline & * \\
		      \hline \\[-\normalbaselineskip]
			0 & 0 & \rvline & 1
		\end{pmatrix}
		\]		
		where the matrix $A$ represents the restriction  of $\varphi^\ast$ to $T_{[\eta]}(\tr_\lambda^{-1}(-2))$

		Let $F_0$ and $F_1$ be the negative and positive boundaries of $M_N$, respectively, so that $M_\varphi$ is obtained by identifying $F_0$ with $F_1$ in a trivial way.	Recall Lemma~\ref{prop.algF} and Remark~\ref{rmk.scaling} (see also Remark~\ref{rmk.coord}) that  $P^\sigma(F_i)/\Cbb^\ast \simeq (\Cbb^\ast)^3 / \Cbb^\ast$ gives a local coordinates of  $\mathrm{tr}^{-1}_\lambda(-2) \subset \Xcal(F)$ for $i=1,2$.
		Precisely, two homogeneous coordinates 
		\[[c(e_{N+1}) : c(e_{N+2}) : c(e_{N+3})] \quad \textrm{and} \quad [c(e_{N-1}): c(e_{N-t_n}):c(e_N)]\] are local coordinates of  $\mathrm{tr}^{-1}_\lambda(-2)$ at $[\eta]$ where the notation $[x:y:z]$ means that we identify it with $[kx:ky:kz]$ for all $k\in \Cbb^\ast$. 
		 Also, recall Equation \eqref{eqn.PtolemyI} that if we multiply the initial variables $c(e_{N+1}), \, c(e_{N+2})$ and $c(e_{N+3})$ by $k \in \Cbb^\ast$, then the rational function $c(e_i)$ becomes $k \cdot c(e_i)$ for all $1 \leq i \leq N$. It implies that $ \frac{\partial (c(e_{N-1}), c(e_{N-t_n}), c(e_N))}{ \partial(c(e_{N+1}), c(e_{N+2}),c(e_{N+3}))}$ has an eigenvalue $1$ and thus
	 	\[ \det \left( t I -	 \dfrac{ \partial (c(e_{N-1}), c(e_{N-t_n}), c(e_N))}{ \partial(c(e_{N+1}), c(e_{N+2}),c(e_{N+3}))} \right)=(t-1) \det (t I -A).\]
	 	This completes the proof, since  $\tau(\rho,t) = \det (t I - \varphi^\ast)=(t-1) \det(tI-A)$.
	\end{proof}

	If $\rho$ is an $\mathrm{SL}_2(\Cbb)$-lift of the geometric representation of $M_\varphi$,  we write $\tau(M_\varphi,t)$ instead of $\tau(\rho,t)$. In this case, 
	it is proved in  \cite[Lem.2.5]{BDHP19} that the twisted cohomology $H^\ast(M_\varphi;V)$ is trivial. Also, the restriction of $\rho$ to $F$ is irreducible;  otherwise, the generators $g_1$ and $g_2$ of $\pi_1(F)$ satisfy  up to conjugation
	\[\rho(g_1) 
		= \begin{pmatrix} m_1 & \ast \\ 0 & m_1^{-1} 
		\end{pmatrix}, \quad
		\rho(g_1) 
		= \begin{pmatrix} m_2 & \ast \\ 0 & m_2^{-1} 
	\end{pmatrix}
	\]  for some $m_1$ and $m_2 \in \Cbb^\ast$, hence $\tr \rho(\lambda) = \tr \rho(g_1 g_2 g_1^{-1} g_2^{-1}) = 2$ which contradicts to the fact that $\tr \rho(\lambda)=-2$.
	
%	\begin{figure}[htpb!]
%		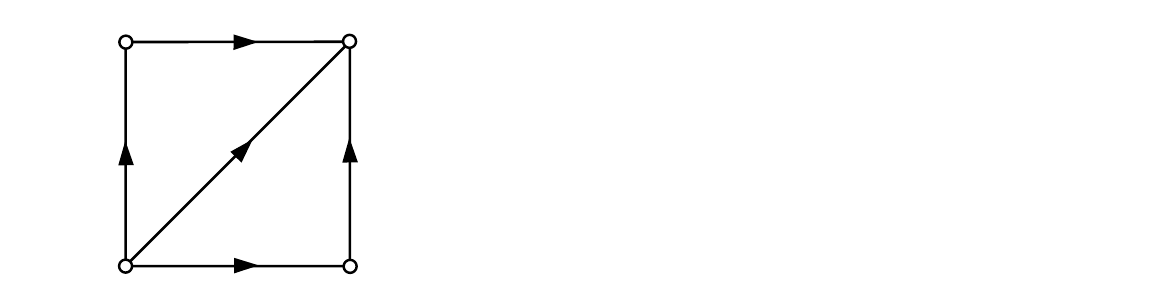
%		\caption{The sequence of indices.}
%		\label{fig.Sequence}
%	\end{figure}	

	\begin{example}[Continued] From Equation~\eqref{eqn.exP1} we have
	\begin{align*}
		c(e_2) & = 
		\frac{x^2 z^2+\left(y ^2+z ^2\right)^2}{x^2 y },\\[5pt]
		c(e_4) &= \frac{x^8 z ^4+x^6 \left(5 y^2 z^4+4 z^6\right)+2 x^4 z^2 \left(y^2+z^2\right)^2 \left(2 y^2+3 z^2\right)+x^2 \left( y^2+z^2\right)^4 \left(y^2+4 z^2\right)+\left(y^2+z^2\right)^6}{x^6 y^4 z}, \\[5pt]
		c(e_5) & = \frac {x ^{12} z ^6+x^{10}  \left(8 y^2 z^6+6 z^8\right)+x^8 z^4 \left(y^2+z^2\right) \left(4 y^2+3 z^2\right) \left(2 y^2+5 z^2\right)+4 x^6 z^2 \left(y ^2+z^2\right)^4 \left(y^2+5 z^2\right)}{x^9 y^6 z^2} \\
		&   \quad + \frac{x^4 \left(y^2+z^2\right)^5 \left(11y^2 z^2+y^4+15 z^4\right)+2 x^2 \left(y^2+z^2\right)^7 \left(y^2+3 z^2\right)+\left(y^2+ z^2\right)^9}{x^9y^6 z^2}.
		\end{align*}
			where $(x,y,z)=(c(e_{6}),c(e_{7}),c(e_{8}))$.
	Substituting the geometric solution~\eqref{eqn.geomptol} to Theorem \ref{thm.ATAP}
	\[\tau(M_\varphi,t)=\det \left( t I - \frac{ \partial (c(e_{4}), c(e_{2}), c(e_5))}{ \partial(c(e_{6}), c(e_{7}),c(e_8))} \right),\]
	we obtain 
	\[ \tau(M_\varphi,t) \approx 1.00000  - (31.45667 + 9.44217 i)\, t + (31.45667 + 9.44217 i)\, t^2 - 1.00000 t^3\]
	which agrees with the twisted 1-loop invariant $\tau^\CS(\Tcal_\varphi,t)$  computed in Example~\ref{ex.twist}.
	
	\end{example}
 	
	\section{Proofs} \label{sec.proof}
	\subsection{Proof of Theorem~\ref{thm.main2}}  \label{sec.proof1}
	
	Applying the computation in Section~\ref{sec.PtolemyOncePun} to Theorem~\ref{thm.twistPtolemy}, we have
	\begin{equation}
		\tau^\CS(\Tcal_\varphi,t) = \pm \frac{1}{c_1\cdots c_N} \det \left( \dfrac{\partial p^\sigma_i}{\partial c_j} \right), \quad 1 \leq i,j \leq N+3
	\end{equation}
	where $p^\sigma_i$ ($1 \leq i \leq N$) is given by (cf. Equation~\eqref{eqn.Ptolemyr})
		\begin{equation} \label{eqn.PtolemyC}
		p^\sigma_i= \left\{
		\begin{array}{ll}
			c_i c_\beta - c_\alpha^2-c_\gamma^2 & \textrm{if } \varphi_{i-1}=R\\[10pt]
			c_i c_\alpha - c_\beta^2 - c_\gamma^2  & \textrm{if } \varphi_{i-1}=L\\
		\end{array} 
		\right.  \quad \textrm{ for Figure~\ref{fig.LRPtolemy}}
	\end{equation}	
	and (cf. Equation~\eqref{eqn.id})
	\begin{equation}  \nonumber
	p_{N+1}  = c_{N+1} -  c_{N-1} t, \quad 
	p_{N+2} = c_{N+2} - c_{N-t_n} t,  \quad
	p_{N+3} = c_{N+3} -  c_N t.
\end{equation}
Recall that the quad type of $\Delta$ is given by $(e_i,e_{i'})$ for some $i'$. Precisely, for Figure~\ref{fig.LRPtolemy} we have $e_{i'}= e_\beta$ if $\varphi_{i-1}=R$ and $e_{i'}=e_\alpha$ if $\varphi_{i-1}=L$.
Since $f=(1,\ldots,1)^T$ and $f'=f''=(0,\ldots,0)^T$ satisfy $\mathbf{G} f+\mathbf{G}' f' + \mathbf{G}'' f'' =(2,\ldots,2)^T$ (see the proof of Theorem~\ref{thm.X}), we have
\[\prod_{i=1}^N c_i c_{i'}= \prod_{i=1}^N c_{\Delta_i}^{f_i}= \prod_{i=1}^N c^2_i .\]
Here $c_{\Delta_i}^{f_i} $ is the notation defined in Lemma~\ref{lem.key2}.
 It follows that $\prod_i c_{i'}= \prod_i c_i$ and thus
	\begin{equation}
	\tau^\CS(\Tcal_\varphi,t) = \pm \det \left( \dfrac{\partial (\overline{p}^\sigma_1,\ldots,\overline{p}^\sigma_N,\, p^\sigma_{N+1},\, p^\sigma_{N+2},\, 
	p^\sigma_{N+3})}{\partial (c_1,\ldots,c_N,\, c_{N+1},\, c_{N+2},\, c_{N+3})} \right)
\end{equation}
	where	
			\begin{equation} \label{eqn.PtolemyB}
		\overline{p}^\sigma_i= \left\{
		\begin{array}{ll}
			c_i - \dfrac{c_\alpha^2-c_\gamma^2}{ c_\beta} & \textrm{if } \varphi_{i-1}=R\\[10pt]
			c_i - \dfrac{c_\beta^2 - c_\gamma^2}{c_\alpha} & \textrm{if } \varphi_{i-1}=L\\
		\end{array} 
		\right.  \quad \textrm{ for Figure~\ref{fig.LRPtolemy}}.
	\end{equation}	
	
%	\begin{equation}
%	\left\{
%	\begin{array}{rcl}
%		p_{-2} & := &c_{-2} - c_{N-1} \, t \, ,\\
%		p_{-1}&:=&c_{-1} - c_{N-t_n} \, t \, ,\\
%		p_{0}&:=&c_0 -  c_N \, t \, .
%	\end{array}
%	\right. 
%\end{equation}

%	We refer to $p_{-2}=0,\ldots,p_N=0$ as \emph{twisted Ptolemy equations} of $\Tcal_\varphi$.

%	\begin{lemma} \label{lem.lift}
%		The geometirc represnetation $\rho$ admits a lift $\widetilde{\rho} : \pi_1(M_\varphi) \rightarrow \mathrm{SL}_2(\Cbb)$ such $\tr \widetilde{\rho}(\mu)=2$ and $\tr \widetilde{\rho}(\lambda)=-2$
%	\end{lemma}
%	\begin{proof} The fundamental group $\pi_1(M_\varphi)$ has a presentation 
%		\[\pi_1(M_\varphi) = \langle g_1,g_2, \mu \, | \, \mu g^{-1} \mu  =\varphi_\ast (g_1), \mu g^{-1} \mu  =\varphi_\ast (g_1) \rangle\]
%	where $g_1$ and $g_2$ are the generators of $\pi_1(F)$ and $\varphi_\ast : \pi_1(F)\rightarrow \pi_1(F)$ is the isomorphism induced from the monodromy $\varphi$.
%	It is known that the geometric representation $\rho$ has a non-trivial obstruction class \cite{Cal06}, i.e., any lift $\widetilde{\rho}$ satisfies either $\tr \widetilde{\rho}(\mu)=-2$  or $\tr \widetilde{\rho}(\lambda)=-2$. 	
%	\end{proof}

%	It is proved in \cite[Sec.2.3]{Yoon19} that if the obstruction cocycle $\sigma$ matches, then there exists a $\sigma$-Ptolemy assignment $c \in P_\sigma(M_\varphi)$ corresponding to the geometric solution $z_1,\ldots, z_N$ (in terms of Equation~\ref{eqn.shape}).
%	
	\begin{lemma} \label{lem.red}
	 We have 
\[ \det \left( \dfrac{\partial (p^\sigma_{N+1},\, p^\sigma_{N+2},\, 
	p^\sigma_{N+3},\,\overline{p}^\sigma_1,\ldots,\overline{p}^\sigma_N)}{\partial (c_{N+1},\, c_{N+2},\, c_{N+3}, \, c_1,\ldots,c_N)} \right) =\det \left( \dfrac{\partial (p^\sigma_{N+1},\, p^\sigma_{N+2},\, 
	p^\sigma_{N+3},\,\overline{p}^\sigma_1,\ldots,\overline{p}^\sigma_{N-1})}{\partial (c_{N+1},\, c_{N+2},\, c_{N+3}, \, c_1,\ldots,c_{N-1})} \right)\]		
	where the right-hand side is obtained by eliminating the variable $c_N$  by the equation $\overline{p}^\sigma_N =0$.
	\end{lemma}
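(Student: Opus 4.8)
The plan is to treat Lemma~\ref{lem.red} as a routine elimination of a single variable from a Jacobian, the only non-formal input being a bookkeeping fact about which of the functions $p^\sigma_{N+1},p^\sigma_{N+2},p^\sigma_{N+3},\overline{p}^\sigma_1,\ldots,\overline{p}^\sigma_{N}$ actually involve the variable $c_N$. First I would record the structural observation that, by the inductive construction of $\Tcal_\varphi$ together with the rule~\eqref{eqn.change}, every edge of the positive boundary of $M_{i-1}$ is either one of $e_{N+1},e_{N+2},e_{N+3}$ or an edge $e_j$ with $j<i$: the base case $i=1$ is the triple $(e_{N+1},e_{N+2},e_{N+3})$ of $F=M_0$, and passing from $M_{i-2}$ to $M_{i-1}$ the rule~\eqref{eqn.change} keeps two of the old edges and inserts the newly created edge $e_{i-1}$. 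Consequently, for $1\le i\le N-1$ the function $\overline{p}^\sigma_i$ of~\eqref{eqn.PtolemyB} is a rational function of $c_{N+1},c_{N+2},c_{N+3}$ and $c_1,\ldots,c_{i-1}$ only; in particular it does not involve $c_N$. The same is true of $p^\sigma_{N+1}=c_{N+1}-c_{N-1}t$ and $p^\sigma_{N+2}=c_{N+2}-c_{N-t_n}t$, since $N-1$ and $N-t_n$ are both $\le N-1$. Thus, among the $N+2$ functions $p^\sigma_{N+1},p^\sigma_{N+2},p^\sigma_{N+3},\overline{p}^\sigma_1,\ldots,\overline{p}^\sigma_{N-1}$, the variable $c_N$ occurs only in $p^\sigma_{N+3}=c_{N+3}-c_Nt$, and there with $\partial p^\sigma_{N+3}/\partial c_N=-t$. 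Finally, $\overline{p}^\sigma_N=c_N-h$ for a rational function $h$ of $c_{N+1},c_{N+2},c_{N+3},c_1,\ldots,c_{N-1}$ (no $c_N$), so $\partial\overline{p}^\sigma_N/\partial c_N=1$ and the equation $\overline{p}^\sigma_N=0$ solves uniquely for $c_N$ as $c_N=h$.

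\textbf{The computation.} With this in hand I would write the left-hand side Jacobian with its last row corresponding to $\overline{p}^\sigma_N$ and its last column to $c_N$. By the previous paragraph all the functions are at most linear in $c_N$ with constant coefficient and $h$ does not involve $c_N$, so the entries of this matrix do not involve $c_N$ at all; it has the block form
\[
J=\begin{pmatrix} A & v \\ w^{T} & 1 \end{pmatrix},
\]
where $A=\dfrac{\partial(p^\sigma_{N+1},p^\sigma_{N+2},p^\sigma_{N+3},\overline{p}^\sigma_1,\ldots,\overline{p}^\sigma_{N-1})}{\partial(c_{N+1},c_{N+2},c_{N+3},c_1,\ldots,c_{N-1})}$, the column $v$ consists of the $c_N$-derivatives of these functions and has a single nonzero entry $-t$ in the $p^\sigma_{N+3}$-row, and $w^{T}=\dfrac{\partial\overline{p}^\sigma_N}{\partial(c_{N+1},c_{N+2},c_{N+3},c_1,\ldots,c_{N-1})}=-\dfrac{\partial h}{\partial(c_{N+1},c_{N+2},c_{N+3},c_1,\ldots,c_{N-1})}$. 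Adding $t$ times the $\overline{p}^\sigma_N$-row to the $p^\sigma_{N+3}$-row clears the entry $-t$ and replaces the $p^\sigma_{N+3}$-row by the partial derivatives of $\widehat{p}^\sigma_{N+3}:=c_{N+3}-ht$; since no other function among the first $N+2$ involves $c_N$, after this operation the $c_N$-column has a single nonzero entry, namely the $1$ in the $\overline{p}^\sigma_N$-row. Expanding the determinant along that column yields
\[
\det J=\det\frac{\partial(p^\sigma_{N+1},p^\sigma_{N+2},\widehat{p}^\sigma_{N+3},\overline{p}^\sigma_1,\ldots,\overline{p}^\sigma_{N-1})}{\partial(c_{N+1},c_{N+2},c_{N+3},c_1,\ldots,c_{N-1})},
\]
which is exactly the right-hand side of the lemma, $\widehat{p}^\sigma_{N+3}$ being precisely $p^\sigma_{N+3}$ after the substitution $c_N=h$ prescribed by $\overline{p}^\sigma_N=0$. (Equivalently, this is the Schur-complement identity $\det J=\det(A-vw^{T})$ combined with the chain rule $\partial\widehat{p}^\sigma_{N+3}/\partial c_j=\partial p^\sigma_{N+3}/\partial c_j+(\partial p^\sigma_{N+3}/\partial c_N)(\partial h/\partial c_j)$.)

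\textbf{Main difficulty.} I do not expect a genuine obstacle here: the entire content is the structural claim of the first paragraph — that $c_N$ enters only $p^\sigma_{N+3}$ among the kept functions and enters $\overline{p}^\sigma_N$ linearly with coefficient $1$ — after which the determinant identity is a one-line column/row manipulation. The only care needed is keeping the edge indices straight, so that the positive-boundary triple of $M_{N-1}$, which via~\eqref{eqn.change} is refined to $(e_{N-1},e_{N-t_n},e_N)$ in $M_N$, contributes $c_N$ to $\overline{p}^\sigma_N$ but never to $\overline{p}^\sigma_i$ with $i<N$.
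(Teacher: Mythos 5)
Your proposal is correct and takes essentially the same approach as the paper: both use the block-matrix/Schur-complement identity with the observation that $\overline{p}^\sigma_N=c_N-h$ is linear in $c_N$ with unit coefficient, that $\overline{p}^\sigma_1,\ldots,\overline{p}^\sigma_{N-1}$ do not involve $c_N$, and then identify the correction terms via the chain rule. (Your added observation that $p^\sigma_{N+1}$ and $p^\sigma_{N+2}$ also do not involve $c_N$, so that only the $p^\sigma_{N+3}$-row needs clearing, is a harmless sharpening of what the paper does; the one small imprecision is that $\overline{p}^\sigma_i$ depends on $c_i$ as well as on $c_{N+1},c_{N+2},c_{N+3},c_1,\ldots,c_{i-1}$, but this does not affect the conclusion.)
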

	\begin{proof}
%	Recall that the equation $\overline{p}^\sigma_N=0$ is given by
%	$\overline{p}^\sigma_N= 	c_{N}-\frac{c_\beta^2+c_\gamma^2}{c_\alpha}=0$   $(\Leftrightarrow c_N= \frac{c_\beta^2+c_\gamma^2}{c_\alpha})$
%	for some indices $\alpha,\beta, \gamma \neq N$. 
	From  the determinant formula for a block matrix with the fact $\partial \overline{p}^\sigma_N / \partial c_N =1$ we have
		\begin{align}
		&\det \left( \dfrac{\partial (p^\sigma_{N+1},\, p^\sigma_{N+2},\, 
			p^\sigma_{N+3},\,\overline{p}^\sigma_1,\ldots,\overline{p}^\sigma_N)}{\partial (c_{N+1},\, c_{N+2},\, c_{N+3}, \, c_1,\ldots,c_N)} \right)\\
		& =\det \left ( \dfrac{\partial (p^\sigma_{N+1},\, p^\sigma_{N+2},\, 
			p^\sigma_{N+3},\,\overline{p}^\sigma_1,\ldots,\overline{p}^\sigma_{N-1})}{\partial (c_{N+1},\, c_{N+2},\, c_{N+3}, \, c_1,\ldots,c_{N-1})}  - 
		\begin{pmatrix}
			\frac{\partial p^\sigma_{N+1}}{c_{N}} \\
			\vdots \\
			\frac{\partial \overline{p}^\sigma_{N-1}}{c_{N}} 
		\end{pmatrix} 
		\begin{pmatrix}
		\frac{\partial \overline{p}^\sigma_N}{c_{N+1}} & \cdots &
		\frac{\partial \overline{p}^\sigma_N}{c_{N-1}} 
		\end{pmatrix} \right ). \label{eqn.induction}
		\end{align}
	Since $\partial \overline{p}^\sigma_k / \partial c_N =0$ for $1 \leq k \leq N-1$,  the matrix in Equation~\eqref{eqn.induction} equals to
	\begin{equation} 
 \dfrac{\partial (p^\sigma_{N+1},\, p^\sigma_{N+2},\, 
	p^\sigma_{N+3},\,\overline{p}^\sigma_1,\ldots,\overline{p}^\sigma_{N-1})}{\partial (c_{N+1},\, c_{N+2},\, c_{N+3}, \, c_1,\ldots,c_{N-1})}-		\begin{pmatrix}
			\frac{\partial p^\sigma_{N+1}}{\partial c_{N}} \frac{\partial \overline{p}^\sigma_N}{\partial c_{N+1}} & \cdots & \frac{\partial p^\sigma_{N+1}}{\partial c_{N}}		\frac{\partial \overline{p}^\sigma_N}{\partial c_{N-1}} 	\\[5pt]
			\frac{\partial p^\sigma_{N+2}}{\partial c_{N}} \frac{\partial \overline{p}^\sigma_N}{\partial c_{N+1}} & \cdots & \frac{\partial p^\sigma_{N+2}}{\partial c_{N}}		\frac{\partial \overline{p}^\sigma_N}{\partial c_{N-1}} 	\\[5pt]	
			\frac{\partial p^\sigma_{N+3}}{\partial c_{N}} \frac{\partial \overline{p}^\sigma_N}{\partial c_{N+1}} & \cdots & \frac{\partial p^\sigma_{N+3}}{\partial c_{N}}		\frac{\partial \overline{p}^\sigma_N}{\partial c_{N-1}} 	\\[5pt]
%			\hline 
			& \bigzero &
		\end{pmatrix}.
	\end{equation}
	Moreover, since  $\frac{\partial \overline{p}^\sigma_N}{\partial c_k}= - \frac{\partial c_N}{\partial c_k}$ for $k \neq N$,
	the above matrix equals to 
	\begin{equation} \label{eqn.induction2}
	  \dfrac{\partial (p^\sigma_{N+1},\, p^\sigma_{N+2},\, 
		p^\sigma_{N+3},\,\overline{p}^\sigma_1,\ldots,\overline{p}^\sigma_{N-1})}{\partial (c_{N+1},\, c_{N+2},\, c_{N+3}, \, c_1,\ldots,c_{N-1})} +		\begin{pmatrix}
		\frac{\partial p^\sigma_{N+1}}{\partial c_{N}} \frac{\partial c_N}{\partial c_{N+1}} & \cdots & \frac{\partial p^\sigma_{N+1}}{\partial c_{N}}		\frac{\partial c_N}{\partial c_{N-1}} 	\\[5pt]
		\frac{\partial p^\sigma_{N+2}}{\partial c_{N}} \frac{\partial c_N}{\partial c_{N+1}} & \cdots & \frac{\partial p^\sigma_{N+2}}{\partial c_{N}}		\frac{\partial c_N}{\partial c_{N-1}} 	\\[5pt]	
		\frac{\partial p^\sigma_{N+3}}{\partial c_{N}} \frac{\partial c_N}{\partial c_{N+1}} & \cdots & \frac{\partial p^\sigma_{N+3}}{\partial c_{N}}		\frac{\partial c_N}{\partial c_{N-1}} 	\\[5pt]
		%			\hline 
		& \bigzero &
	\end{pmatrix}.
	\end{equation}		
	Due to the chain rule, if  we eliminate the variable $c_N$ by using the equation $\overline{p}^\sigma_N =0$, then the matrix~\eqref{eqn.induction2} is equal to  $\frac{\partial (p^\sigma_{N+1}, p^\sigma_{N+2},
		p^\sigma_{N+3},\overline{p}^\sigma_1,\ldots,\overline{p}^\sigma_{N-1})}{\partial (c_{N+1}, c_{N+2}, c_{N+3}, \, c_1,\ldots,c_{N-1})} $
	where $p^\sigma_{N+1}, p^\sigma_{N+2},
	p^\sigma_{N+3}, \overline{p}^\sigma_1,\ldots,\overline{p}^\sigma_{N-1}$ are viewed as functions in $N+2$ variables $c_{N+1}, c_{N+2}, c_{N+3},  c_1,\ldots,c_{N-1}$ (while those in the matrix~\eqref{eqn.induction2} are viewed as functions in $N+3$ variables $c_{N+1}, c_{N+2}, c_{N+3},  c_1,\ldots,c_{N}$).
	\end{proof}
	Continuing the variable reduction of Lemma~\ref{lem.red}, we obtain
	\begin{align}  \allowdisplaybreaks
\tau^\CS(\Tcal_\varphi,t) &= \pm \det \left( \dfrac{\partial (p^\sigma_{N+1},\, p^\sigma_{N+2},\, 
	p^\sigma_{N+3},\,\overline{p}^\sigma_1,\ldots,\overline{p}^\sigma_N)}{\partial (c_{N+1},\, c_{N+2},\, c_{N+3}, \, c_1,\ldots,c_N)} \right) \\
&= \pm \det \left( \dfrac{\partial (p^\sigma_{N+1},\, p^\sigma_{N+2},\, 
	p^\sigma_{N+3})}{\partial (c_{N+1},\, c_{N+2},\, c_{N+3})} \right)\\
&=\pm \det \left ( I-\frac{\partial (c_{N-1}, c_{N-t_n}, c_{N})}{\partial (c_{N+1},c_{N+2}, c_{N+3})} t\right)
	\end{align}
	where $c_{N-t_n}, c_{N-1},  c_N$ are viewed as  functions in $c_{N+1}, c_{N+2}, c_{N+3}$.
	On the other hand, the adjoint twisted Alexander polynomial $\tau(M_\varphi,t)$  is palindromic, namely, $\tau(M_\varphi,t)=\tau(M_\varphi,t^{-1})$ up to multiplication by an element of $\pm t^\Zbb$. See e.g. \cite{Kitano96,KL99}.
	Therefore, combining the above with Theorem~\ref{thm.ATAP}, we obtain
	\begin{equation}
	\tau^\CS(\Tcal_\varphi,t)
	=\det \left ( I-\frac{\partial (c_{N-1}, c_{N-t_n}, c_{N})}{\partial (c_{N+1},c_{N+2}, c_{N+3})} t\right)=\tau(M_\varphi,t^{-1})=\tau(M_\varphi,t)
	\end{equation}
	up to multiplication by an element of $\pm t^\Zbb$. This proves Theorem~\ref{thm.main2}.
	
	\subsection{Proof of Corollary~\ref{thm.main3}} \label{sec.proof2}
	
		Combining  Theorem~\ref{thm.main2} with \cite[Thm.1.7]{GY21}, we obtain  	\[\tau^\CS_\lambda(\Tcal_\varphi) = \tau_\lambda(M_\varphi)\] where $\lambda$ is a peripheral curve $\lambda$ satisfying $\alpha(\lambda)=0$.
		Here
		 $\tau^\CS_\lambda(\Tcal_\varphi$) and $ \tau_\lambda(M_\varphi)$ are the 1-loop invariant of $\Tcal_\varphi$ and the adjoint Reidemeister torsion of $M_\varphi$, respectively, with respect to $\lambda$.
	On the other hand, in \cite[Sec.5.3]{Siejakowski2017} it is proved that if Conjecture~\ref{conj.1loop} holds for one peripheral curve, then it does for all peripheral curves, hence we obtain Corollary~\ref{thm.main3}.

	\bibliographystyle{alpha}
	\bibliography{main}

	\end{document}